\newtheorem{theorem}{Theorem}[section]
\newtheorem{corollary}[theorem]{Corollary}
\newtheorem{lemma}[theorem]{Lemma}
\newtheorem{proposition}[theorem]{Proposition}
\theoremstyle{definition}
\newtheorem{definition}[theorem]{Definition}
\newtheorem{convention}[theorem]{Convention}
\theoremstyle{remark}
\newtheorem{remark}[theorem]{Remark}
\newtheorem{example}[theorem]{Example}
\newenvironment{proof*}{\par\noindent{\it Proof.}}{}
\numberwithin{equation}{section}
\def\Zero{\mathbf{0}}
\def\One{\mathbf{1}}
\def\Infinity{\mathbf{\infty\llap{$\infty$\kern.1pt}}}
\def\funfld{\kappa}
\def\sd{\mathop{\#}} 
\def\ad{\mathrm{ad}}
\def\Gm{\mathbb{G}_m}
\def\cC{\mathcal{C}}
\def\cL{\mathcal{L}}
\def\cP{\mathcal{P}}
\def\OO{\mathcal{O}}
\def\AA{\mathbb{A}}
\let\Aff\AA
\def\FF{\mathbb{F}}
\def\PP{\mathbb{P}}
\def\QQ{\mathbb{Q}}
\def\ZZ{\mathbb{Z}}
\def\mm{\mathfrak{m}}
\def\pp{\mathfrak{p}}
\def\qq{\mathfrak{q}}
\def\Ann{\operatorname{Ann}}
\def\Proj{\operatorname{Proj}}
\def\PGL{\operatorname{PGL}}
\def\GL{\operatorname{GL}}
\def\Hom{\operatorname{Hom}}
\def\Pic{\operatorname{Pic}}
\def\ord{\operatorname{ord}}
\def\im{\operatorname{im}}
\def\Spec{\operatorname{Spec}}
\def\Sym{\operatorname{Sym}}
\def\Aut{\operatorname{Aut}}
\let\spec\Spec
\let\isom\cong
\def\setof#1{{\let\suchthat\colon\let\st\suchthat\{#1\}}}
\def\mat#1{\left(\begin{smallmatrix} #1 \end{smallmatrix}\right)}
\title{Cross-Ratios of Scheme-Valued Points}
\author{Xander Faber}
\address{XF: Center for Computing Sciences \\
Institute for Defense Analyses \\
Bowie, MD}
\email{awfaber@super.org}
\author{Keith Pardue}
\address{KP: National Security Agency, Fort Meade, MD}
\author{David Zelinsky}
\address{DZ: National Security Agency, Fort Meade, MD}
\begin{document}

\begin{abstract}
The classical theory of the cross-ratio is a beautiful case study of the moduli
of ordered points of the projective line and of invariants of the action of
$\PGL_2$. We generalize the theory of the cross-ratio to the setting of
$S$-valued points for an \textit{arbitrary} scheme $S$. To accomplish this goal,
we provide a comprehensive and computationally focused treatment of
automorphisms of projective space over $S$, of equalizers in the category of
schemes, and of vanishing loci of sections of line bundles. Most of these ideas
exist in the literature, though not with the level of detail or generality that
we require. After introducing the notion of a ``strongly distinct'' pair of
morphisms, we define the cross-ratio of 4-tuples of pairwise strongly distinct
$S$-valued points of the projective line --- which is valued in the units of the
ring of global functions on the scheme $S$ --- and show that it enjoys all of
the familiar properties of the cross-ratio.
\end{abstract}

\maketitle\thispagestyle{fancy}


\section{Introduction}

On the projective line over a field $k$, the action of $\PGL_2(k)$ is triply
transitive: Any triple of distinct points can be mapped to any other triple.
However, the action is not 4-transitive.  Indeed, given a 4-tuple of distinct
points, $a,b,c,d$, its orbit under the $\PGL_2(k)$ action is determined by the
cross-ratio, $\chi(a,b,c,d):=\frac{(a-c)(b-d)}{(a-d)(b-c)}$.  That is, if
$a',b',c',d'$ is another 4-tuple of distinct points, there is an element
$M\in\PGL_2(k)$ such that $a'=M.a$, $b'=M.b$, $c'=M.c$, and $d'=M.d$, if and
only if $\chi(a,b,c,d) = \chi(a',b',c',d')$.

The purpose of this note is to show how the above scenario can be generalized to
the case of the projective line $\PP^1_S$ over an arbitrary scheme $S$.  There
are several obstacles that must be dealt with in this quest.  First, we must
understand what takes the place of $\PGL_2(k)$.  That is, we must identify the
automorphism group of $\PP^1_S$.  This is well known, but as we will need to
understand the action in detail, we give an account of this story in
\S\ref{sec:Automorphisms}.  Second, we need to be clear what we mean by
``point''.  For us, this will mean an $S$-valued point; \emph{i.e.}, a section
of the structure map $\PP^1_S\to{}S$.

Finally, and most crucially, we run into the problem that, in general, the
action of $\Aut_S(\PP^1_S)$ on $S$-points is not even \emph{singly} transitive!
However, we can view $\PP^1_S$ as a family of $\PP^1$'s, varying over
the base $S$, and through that lens we see that the right generalization of ``4
distinct points of $\PP^1(k)$'' should be ``4 sections that are distinct in
each fiber of $\PP^1_S$''. In \S\ref{sec:StrongDistinctness} we introduce the
notion of \emph{strong~distinctness} for a pair of $S$-points of $\PP^1_S$, or
more generally for a pair of scheme morphisms $f, g \colon S \to X$.  Morally,
$f$ and $g$ are strongly distinct if they take distinct values at all points of
$S$. This slogan turns out to be equivalent to the definition in some cases,
such as when $S$ is a variety over an algebraically closed field or when $X$ is
an $S$-scheme and $f,g$ are $S$-morphisms (Corollary~\ref{cor:moral_sd}).

Having overcome all of these obstacles, our main result
(Theorem~\ref{thm:3-transitivity}) is that the automorphism group of $\PP^1_S$
acts simply transitively on triples of pairwise strongly distinct
$S$-points.  Moreover, there is a natural generalization of the cross-ratio to
4-tuples of pairwise strongly distinct points
(Definition~\ref{def:cross-ratio}), such that two of these 4-tuples are in the
same $\Aut_S(\PP^1_S)$-orbit if and only if their cross-ratios are equal
(Theorem~\ref{thm:witness}).

The definition of strong distinctness amounts to a condition on the equalizer of
the two morphisms.  To deal with this, we discuss equalizers
in \S\ref{sec:Equalizers}.  Finally, to get a useful criterion for strong
distinctness (the Determinant Criterion, Corollary~\ref{cor:sd-criterion}), we
need some information about the zero locus of a section of a line bundle, which
we discuss in \S\ref{sec:ZeroSets}.

\emph{Some notation and terminology.}  As usual, we write $\OO_X$ for the
structure sheaf of a scheme $X$, and $\Gamma(X,\OO_X)$ for its ring of global
sections.  We also write $\kappa(X)$ for the function field of $X$, when $X$ is
integral.  In particular, if $x\in X$ is a point, then $\kappa(x)$ is its
residue field.  The term \textbf{vector bundle} will refer to a locally free sheaf
of finite rank.  A \textbf{line bundle} is a vector bundle of rank~1.


\section{Automorphisms of Projective Space}
\label{sec:Automorphisms}

Fix an integer $n \geq 2$ for the duration of this section. Our present goal is
to give a complete description of the automorphisms of $\PP^{n-1}_S$ for a
general scheme $S$. This is discussed briefly in \cite[pp.19-21]{GIT} for
noetherian schemes (possibly requiring that $S$ be a variety over an
algebraically closed field) and in \cite[\S6.2]{Hida} for connected affine
schemes. But we want to avoid placing any restrictions on the scheme $S$. While
many of the results stated here are well known, we do not believe there is a
complete account in the literature without some restrictions placed on $S$.  For
that reason, we give an exposition of the theory for the benefit of the reader.

Over a field $k$, it is a staple in every algebraic geometry course to prove
that the automorphism group of $\PP^{n-1}_k$ is isomorphic to
$\GL_n(k)/k^\times=\PGL_n(k)$.  If we wish to replace $k$ with a more general
base scheme $S$, we encounter a problem. The functor $\Spec
A\rightsquigarrow\GL_n(A)/A^\times$ from commutative rings to groups is not a
sheaf in the Zariski topology.  Put another way, the functor
$S\rightsquigarrow\GL_n(S)\!/\Gm(S)$ is not representable, whereas the functor\footnote{
  If $S \to T$ is a morphism of schemes, then base-extending an automorphism of
  $\PP^{n-1}_T$ to $S$ yields an automorphism of $\PP^{n-1}_S$.} taking $S$ to
$\Aut_S(\PP^{n-1}_S)$, the group of $S$-automorphisms of
$\PP^{n-1}_\ZZ\times{}S$, \emph{is} representable by a certain group scheme over
$\ZZ$.\footnote{\label{fn:Grothendieck}Grothendieck raises this issue in
  \cite[\S8]{Grothendieck_Preschemas_Quotients}, where he asserts that the
  ``group quotient'' $\GL_n / \GL_1$ exists, and represents the functor $S
  \mapsto \Aut_S(\PP^{n-1}_S)$, ``du moins pour $S$ noeth\'erien.''}  We will
reserve the name $\PGL_n$ to refer to that group scheme.  Then for any scheme
$S$, $\PGL_n(S)$ denotes the group of morphisms from $S$ to $\PGL_n$.  Thus we
have three functors from $\mathbf{Sch}$ to $\mathbf{Grp}$:
\begin{equation*}
\begin{aligned}
  S &\rightsquigarrow \GL_n(R) / R^\times \\
    & \quad \text{with } R = \Gamma(S,\OO_S) \text{ the ring of global functions on } S, \\
  S &\rightsquigarrow \PGL_n(S), \\
  S &\rightsquigarrow \Aut_S(\PP^{n-1}_S).
\end{aligned}
\end{equation*}
When $S = \Spec R$ is the spectrum of a local ring, all three of these functors
give the same group.  More generally, we will see that the first functor is a
subgroup of the second.  We will also show that the last two are naturally
isomorphic, which is what it means for $\Aut_{\bullet}(\PP^{n-1}_{\bullet})$ to be
represented by $\PGL_n$.  Along the way, we will introduce a fourth functor
$S \rightsquigarrow G_n(S)$ that is useful for explicitly writing down elements of
$\PGL_n(S)$.  Finally, we will close this section with an example that
illustrates the difference between $\GL_2(R) / R^\times$ and $\PGL_2(R)$ for a
general ring $R$.

\begin{remark}
  \label{rem:brauer1}
  A fifth functor ought to be considered in the above story.  Let $M_n(R)$ be
  the algebra of $n \times n$ matrices with coefficients in $R =
  \Gamma(S,\OO_S)$. The Skolem-Noether theorem \cite[IV.1.4]{Milne_EC} shows
  that when $R$ is local, every automorphism of $M_n(R)$ is realized as
  conjugation by an element of $\GL_n(R)$. Consequently, the functor $S
  \rightsquigarrow \Aut(M_n(R))$ is naturally isomorphic to $S \rightsquigarrow
  \GL_n(R) / R^\times$. We will not need this description, though it does
  provide a different viewpoint for Corollary~\ref{cor:exact_seq} below. See
  Remark~\ref{rem:brauer2}.
\end{remark}


\newpage
\subsection{Properties of PGL}

\begin{definition}
Set $N = n^2 - 1$. Define the group scheme $\PGL_n$ as the open
subscheme of
\[
\PP^N=\Proj\,\ZZ\big\lbrack\setof{x_{i,j}\st{}1\le{}i,j\le{}n}\big\rbrack
\] 
obtained by removing the zero locus of the determinant form
$\Delta=\det((x_{i,j}))\in{}\Gamma(\PP^N,\OO_{\PP^N}(n))$. For a scheme $S$, we
define $\PGL_{n}(S)$ to be the set of $S$-points of the scheme $\PGL_{n}$. For a
ring $R$, we write $\PGL_n(R)$ instead of $\PGL_n(\Spec R)$.
\end{definition}

\goodbreak

The usual matrix multiplication formulas define a rational map
$\PP^N \times \PP^N \dashrightarrow \PP^N$, and the fact that the determinant is
multiplicative shows that away from $\Delta=0$, this is a morphism
\hbox{$m \colon \PGL_{n} \times \PGL_{n} \to \PGL_{n}$}.

\begin{proposition}
$\PGL_n$ is a group scheme with multiplication map $m$.
\end{proposition}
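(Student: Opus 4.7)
The plan is to verify the three group scheme axioms for $m$: associativity, existence of a two-sided identity section, and existence of an inverse morphism. I will work with the polynomial descriptions directly when convenient, and invoke Yoneda when it saves labor by reducing to the corresponding statements about matrix multiplication over an arbitrary commutative ring.

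First I will confirm that $m$ really is a morphism $\PGL_n \times \PGL_n \to \PGL_n$. The bilinear forms $\sum_k x_{i,k} y_{k,j}$ have bidegree $(1,1)$, so they define a rational map $\PP^N \times \PP^N \dashrightarrow \PP^N$ whose locus of indeterminacy is the common zero locus of the $n^2$ forms. Multiplicativity of the determinant gives $\det(XY) = \Delta(X)\cdot\Delta(Y)$, which is invertible on $\PGL_n \times \PGL_n$; in particular the product is a nonzero matrix at every geometric point, so the rational map is defined there and lands inside $\{\Delta \ne 0\} = \PGL_n$.

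Next I will handle associativity and the identity. For associativity, the compositions $m \circ (m \times \mathrm{id})$ and $m \circ (\mathrm{id} \times m)$ from $\PGL_n^3$ to $\PGL_n$ are given, on $T$-points for any scheme $T$, by the two bracketings of a triple matrix product. Since matrix multiplication is associative over any commutative ring, these two natural transformations agree, so the morphisms agree by the Yoneda lemma. (Equivalently, the polynomial formulas for the two triple products coincide.) The identity section $e \colon \Spec\ZZ \to \PGL_n$ is the classical $\ZZ$-point given by the identity matrix $I_n$; base-changing to $S$ and invoking $I_n\cdot X = X = X\cdot I_n$ at the level of $T$-points, Yoneda again yields $m\circ(e, \mathrm{id}) = \mathrm{id} = m\circ(\mathrm{id}, e)$.

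The main obstacle is constructing the inverse morphism $i \colon \PGL_n \to \PGL_n$ globally, since naïvely the formula involves dividing by $\Delta$. The fix is to use the classical adjugate: the $(i,j)$-entry of $\operatorname{adj}(X)$ is a signed $(n-1)\times(n-1)$ minor, hence a homogeneous polynomial of degree $n-1$ in the $x_{i,j}$. These $n^2$ forms define a rational map $\PP^N \dashrightarrow \PP^N$ that I must show is defined on all of $\PGL_n$ and factors through $\PGL_n$. The first point follows from Laplace expansion: if every $(n-1)\times(n-1)$ minor vanishes at a point of $\PGL_n$, then so does $\Delta$, contrary to $\Delta \ne 0$ on $\PGL_n$. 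The second point follows from the classical identity $\det(\operatorname{adj}(X)) = \det(X)^{n-1}$, which shows the image has nonvanishing determinant. Finally, the matrix identity $X\cdot\operatorname{adj}(X) = \Delta(X)\cdot I_n = \operatorname{adj}(X)\cdot X$ collapses projectively to $X\cdot\operatorname{adj}(X) = I_n$ in $\PGL_n$, so Yoneda gives $m\circ(\mathrm{id}, i) = e \circ \pi = m\circ(i,\mathrm{id})$, where $\pi\colon \PGL_n \to \Spec\ZZ$ is the structure map. This completes the verification of the group scheme axioms.
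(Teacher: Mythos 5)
Your proof is correct and takes essentially the same route as the paper's: the identity section is the $\ZZ$-point given by the identity matrix, associativity reduces to associativity of matrix multiplication (equivalently, equality of the polynomial formulas), and inversion is constructed from the adjugate, whose entries are forms of degree $n-1$, using $\det(\operatorname{adj}(X)) = \Delta^{n-1}$ and the fact that the identity $X \cdot \operatorname{adj}(X) = \Delta(X) \cdot I_n$ collapses projectively because $\Delta$ is nowhere vanishing on $\PGL_n$. Your extra checks --- that $m$ is defined away from $\Delta = 0$ and that the adjugate's minors cannot all vanish on $\PGL_n$ (via Laplace expansion) --- merely make explicit what the paper leaves implicit.
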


\begin{proof}
Clearly $m$ is associative, since the same is true of matrix multiplication.
$\PP^N$ has the affine open subset
$D(x_{1,1})=\spec\ZZ[\setof{x_{i,j}/x_{1,1}\st (i,j)\ne(1,1)}]$, and the map
$\spec\ZZ\to D(x_{1,1})$ given by $x_{i,j}/x_{1,1}\mapsto\delta_{i,j}$
(Kronecker delta) defines an element of $\PGL_n(\ZZ)$ that serves as the group
identity.  Finally, to define the inversion morphism, let $M=(x_{i,j})$ and
consider the usual adjugate formula for the inverse of a matrix,
$M^{-1}=\Delta^{-1}A$, where $A$ is the adjugate matrix to $M$. The entries of
$A$ are homogeneous polynomials of degree $n-1$ in $\setof{x_{i,j}}$, so $A$
defines a rational map $i:\PP^N\dashrightarrow\PP^N$.  From the formula, and
properties of $\det$, we see that $\det(A) = \det(M)^{n-1} = \Delta^{n-1}$, so
that $i$ induces a morphism from $\PGL_n$ to itself.  Since multiplication by
$\Delta^{-1}$ induces the identity morphism on $\PGL_n$, we find that $i$ serves
as the inverse morphism for the group law $m$. That is, $\PGL_n$ is a group
scheme.
\end{proof}

\begin{definition}
  \label{def:G_n}
For any scheme $S$, let $G_n(S)$ be the set of equivalence
classes
\footnote{More precisely, we choose a set of representatives, one from each
  equivalence class. Such a set of representatives exists because line bundles
  are locally finitely presented. But, the equivalence classes themselves are
  proper classes, not eligible for set membership without expanding the
  universe. We do not belabor the difference any further in this paper.} of
pairs $(L;M)$, where:
\begin{itemize}
\item $L$ is a line bundle on $S$ such that $L^{\otimes n}\cong\OO_S$;
\item $M= (m_{i,j})$ is an $n \times n$ matrix of global sections
  of $L$ such that the determinant
  \[
  \det(M) := \sum_{\sigma \in S_n} (-1)^{\mathrm{sgn}(\sigma)} \ m_{1,
    \sigma(1)} \otimes \cdots \otimes m_{n, \sigma(n)}
  \]
  is a nowhere-vanishing section of $L^{\otimes{}n}$;
\item $(L;M)\sim (L';M')$ if there exists an
isomorphism $\varphi:L\to{}L'$ such that $\varphi(M)=M'$, with $\varphi(M)$ computed by
applying $\varphi$ to each entry of $M$.
\end{itemize}
We endow $G_n(S)$ with a group structure via $(L;M)\cdot (L';M') = (L \otimes L'; M
\star M')$, where $\star$ denotes ``tensor multiplication''. That is, $(M \star
M')_{i,j} = \sum_k m_{i,k} \otimes m_{k,j}' \in \Gamma(S, L \otimes L')$. 
\end{definition}

\begin{remark}
The identity element of $G_n(S)$ is $(\OO_S; I)$, where $I_{i,j} = \delta_{i,j}$.
Given $(L;M) \in G_n(S)$, write $\ad(M)$ for the ``tensor adjugate'' of $M$. That
is, $\ad(M)_{i,j} = (-1)^{i+j}\det(M^{j,i})$, where $M^{j,i}$ is the matrix of
global sections of $L$ obtained by removing the $j$-th row and $i$-th column
from $M$. Then one verifies that $(L^{\otimes (n-1)}; \ad(M))$ is the inverse of
$(L; M)$ in the group $G_n(S)$.
\end{remark}

\begin{proposition}
\label{prop:simplePGLn}
The functors $S \rightsquigarrow \PGL_n(S)$ and $S \mapsto G_n(S)$
are naturally isomorphic. 
\end{proposition}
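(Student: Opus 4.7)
The plan is to identify both functors with a common third description, namely the functor that classifies (equivalence classes of) maps from $S$ into the ambient $\PP^N$ whose associated determinant section is nowhere vanishing. The key input is the standard functor of points of projective space: a morphism $S\to\PP^N_\ZZ$ is the same datum as an equivalence class of pairs $(L;s_0,\dots,s_N)$ with $L$ a line bundle on $S$ and the $s_i\in\Gamma(S,L)$ generating $L$ at every point, where $(L;s_i)\sim(L';s_i')$ when there is an isomorphism $\varphi\colon L\to L'$ with $\varphi(s_i)=s_i'$.

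First I would construct the natural transformation $G_n(S)\to\PGL_n(S)$. Given $(L;M)\in G_n(S)$, the nowhere-vanishing of $\det(M)\in\Gamma(S,L^{\otimes n})$ forces the entries $m_{i,j}$ to generate $L$ (since any common zero would be a zero of $\det(M)$), so the tuple $(L;(m_{i,j}))$ yields a morphism $S\to\PP^N$. Pulling the determinant form $\Delta\in\Gamma(\PP^N,\OO_{\PP^N}(n))$ back along this morphism recovers $\det(M)\in\Gamma(S,L^{\otimes n})$, which is nowhere vanishing; hence the morphism factors through $\PGL_n\subset\PP^N$. The equivalence relation on $G_n(S)$ is literally the projective-space equivalence, so this assignment is well defined.

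Next I would show bijectivity. Surjectivity: an $S$-point of $\PGL_n$ is an $S$-point of $\PP^N$ whose pullback of $\Delta$ is nowhere vanishing, which by the projective-space description is represented by some $(L;M)$ with $\det(M)$ a nowhere-vanishing section of $L^{\otimes n}$; that nowhere-vanishing section is itself an isomorphism $\OO_S\xrightarrow{\sim} L^{\otimes n}$, so $(L;M)$ lies in $G_n(S)$. Injectivity is immediate from the fact that the two equivalence relations coincide.

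The remaining content is verifying that the bijection is a group homomorphism and natural in $S$. For the group law, I would pull back the multiplication morphism $m\colon\PGL_n\times\PGL_n\to\PGL_n$: on the level of projective coordinates it is given by the bilinear forms $(x_{i,j}),(y_{i,j})\mapsto\bigl(\sum_k x_{i,k}y_{k,j}\bigr)$, and the standard computation of pullbacks of sections under a morphism to $\PP^N$ identifies this with the tensor-multiplication formula $(M\star M')_{i,j}=\sum_k m_{i,k}\otimes m_{k,j}'\in\Gamma(S,L\otimes L')$. Naturality follows formally from the fact that line-bundle pullback commutes with the formation of $\det$, of $\star$, and of the bijection with $\PP^N(S)$. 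The main obstacle is the group-law compatibility: one has to keep careful track of where the isomorphism $L^{\otimes n}\otimes L'^{\otimes n}\cong(L\otimes L')^{\otimes n}$ intervenes and confirm that the multiplicativity of $\det$ on $M\star M'$ matches multiplication of the pulled-back $\Delta$'s; everything else reduces to the well-known functor of points of $\PP^N$.
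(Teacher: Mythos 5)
Your proposal is correct and follows essentially the same route as the paper: both identify $\PGL_n(S)$ inside $\PP^N(S)$ via the standard functor of points of projective space, observe that the equivalence relations coincide, and note that the group law on each side is matrix multiplication, with naturality coming from pullback of line bundles and sections. If anything, you make explicit one small point the paper leaves implicit --- that nowhere-vanishing of $\det(M)$ forces the entries of $M$ to generate $L$, which is needed since Definition~\ref{def:G_n} does not assume generation.
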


\begin{proof}
Fix a scheme $S$ and set $N = n^2 - 1$. By definition,
$\PGL_n(S)=\Hom(S,\PGL_n)$ is the subset of $\PP^N(S)$ comprising morphisms
$S\to\PP^N$ with image disjoint from the zero locus of $\Delta$.  Using
\cite[II.7.1]{Hartshorne_Bible}, we see that up to a natural notion of
equivalence, such maps are in bijective correspondence with data
$(L;(s_{i,j})_{1\le{}i,j\le{}n})$ where $L$ is a line bundle on $S$, and
$s_{i,j}$ are global sections of $L$ that generate $L$ and that satisfy
$\Delta(s_{1,1},\dots,s_{n,n})\notin\mathfrak{m}_xL^{\otimes{}n}_x$ locally at
every point $x\in{}S$.  This last condition is the same as saying
$\det((s_{i,j}))$ generates $L^{\otimes{}n}$; that is, $(L;M)$ represents an
element of $G_n(S)$, where $M$ is the matrix with entries $M_{i,j}=s_{i,j}$.
Moreover, the criterion for data $(L;M)$ and $(L';M')$ to define the same point
of $\PP^N(S)$, hence of $\PGL_n(S)$, is exactly the equivalence relation
defining $G_n(S)$. Thus, we get a bijection of sets $\PGL_n(S)
\stackrel{\sim}{\to} G_n(S)$. It is an isomorphism of groups because the group
operation on each side is defined by matrix multiplication.

For any morphism of schemes $T \to S$, line bundles and sections pull back, so
naturality of the isomorphisms $\PGL_n(\bullet) \stackrel{\sim}{\to}
G_n(\bullet)$ follows immediately.
\end{proof}

The functor $G_n$ is equivalent to $\PGL_n$, but the former is much easier to
work with. Henceforth, we will identify them without comment.  In particular, we
will treat elements of $\PGL_n(S)$ as being represented by pairs
$(L;M)\in{}G_n(S)$.

\begin{corollary}
  \label{cor:exact_seq}
  For any scheme $S$ with ring of global functions $R = \Gamma(S,\OO_S)$, we
  have a natural exact sequence of groups
  \begin{equation}
    \label{eq:exact}
  1 \to R^\times \to \GL_n(R) \to \PGL_n(S) \to \Pic(S)[n].
  \end{equation}
  In particular, if $\Pic(S)[n] = 0$, then $\PGL_n(S) \cong \GL_n(R) / R^\times$. 
\end{corollary}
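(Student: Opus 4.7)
The plan is to use the identification $\PGL_n(S) \cong G_n(S)$ from Proposition~\ref{prop:simplePGLn} throughout. First, I will name the three maps in the sequence: $R^\times \to \GL_n(R)$ sends a unit $u$ to the scalar matrix $uI$; $\GL_n(R) \to \PGL_n(S)$ sends $A$ to the class of $(\OO_S; A)$, where $A$ is interpreted as a matrix of global sections of the trivial bundle; and $\PGL_n(S) \to \Pic(S)[n]$ sends $(L; M)$ to the isomorphism class $[L]$, which lands in the $n$-torsion because $L^{\otimes n} \cong \OO_S$ by definition of $G_n$. Each of these is easily seen to be a group homomorphism from the explicit group laws.

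Next I will verify exactness at $\GL_n(R)$. A matrix $A \in \GL_n(R)$ represents the identity in $\PGL_n(S)$ iff $(\OO_S; A) \sim (\OO_S; I)$, iff there exists an automorphism $\varphi \colon \OO_S \to \OO_S$ with $\varphi(A) = I$. Such an automorphism is precisely multiplication by some $u \in R^\times$, and the equation $uA = I$ forces $A = u^{-1}I$ to be scalar. This identifies the kernel with the image of $R^\times$, and injectivity of $u \mapsto uI$ is clear.

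For exactness at $\PGL_n(S)$, an element $(L; M) \in G_n(S)$ lies in the kernel of the map to $\Pic(S)[n]$ iff $L \cong \OO_S$. Given such an isomorphism $\varphi \colon L \to \OO_S$, the equivalence $(L; M) \sim (\OO_S; \varphi(M))$ reduces to asking whether the new matrix has entries in $R$ and is invertible. The determinant $\varphi(\det M)$ is a nowhere-vanishing global section of $\OO_S$; the one small point I should flag is the observation that a section of $\OO_S$ is nowhere-vanishing iff it is a unit in $R$ (which one checks stalk-wise, since a function is a unit iff its germ at every point is a unit). Hence $\varphi(M) \in \GL_n(R)$, showing the kernel coincides with the image of $\GL_n(R) \to \PGL_n(S)$.

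Finally, the corollary is immediate: if $\Pic(S)[n] = 0$, then the map $\GL_n(R) \to \PGL_n(S)$ is surjective with kernel $R^\times$, giving the stated isomorphism. Naturality in $S$ of the three maps is evident from the constructions, since line bundles, global sections, and determinants all pull back in the expected way. I do not anticipate any serious obstacle; the only mildly subtle point is the nowhere-vanishing/unit equivalence for global sections, which is where the corollary genuinely uses the choice of $R = \Gamma(S, \OO_S)$ rather than a local ring of $S$.
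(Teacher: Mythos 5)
Your proof is correct and follows essentially the same route as the paper: identify $\PGL_n(S)$ with $G_n(S)$, use the same three maps, and verify exactness at $\GL_n(R)$ and at $\PGL_n(S)$ via the equivalence relation on pairs $(L;M)$. The one place you are more careful than the paper --- checking that $\varphi(M)$ actually lands in $\GL_n(R)$ because a nowhere-vanishing global section of $\OO_S$ is a unit of $R$ --- is a detail the paper's proof leaves implicit, and your stalkwise justification of it is sound.
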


\begin{proof}
Proposition~\ref{prop:simplePGLn} shows that $\PGL_n \cong G_n$ as functors, so
we identify them without further comment. 

The map $R^\times \to \GL_n(R)$ takes $r$ to the scalar matrix $rI$, where $I$
is the $n\times n$ identity matrix. The map $\GL_n(R) \to \PGL_n(S)$ is given by
$M \mapsto (\OO_S; M)$. The map $\PGL_n(S) \to \Pic(S)[n]$ is given by $(L; M)
\mapsto L$. Since $L^{\otimes{n}} \cong \OO_S$ for $(L;M) \in G_n(S)$, we find
that $L \in \Pic(S)[n]$. Naturality of the sequence \eqref{eq:exact} in $S$ is
immediate from the definition of the maps.

It is obvious that $R^\times \to \GL_n(R)$ is injective. Let us show that the
sequence \eqref{eq:exact} is exact at $\GL_n(R)$. First, the composition
$R^\times \to \GL_n(R) \to \PGL_n(S)$ is trivial because it carries $r$ to
$(\OO_S; rI) \sim (\OO_S; I)$. Next, if the matrix $M$ lies in the kernel of
$\GL_n(R) \to \PGL_n(S)$, then there is an $\OO_S$-module automorphism
$\varphi \colon \OO_S \to \OO_S$ such that $\varphi(I) = M$. Since $\varphi$
must be multiplication by some $r \in R^\times$, we see that $M = rI$ is a
scalar matrix. So $M$ is in the image of $R^\times \to \GL_n(R)$, as desired.

Now we show that the sequence \eqref{eq:exact} is exact at $\PGL_n(S)$. The
composition
\begin{equation*}
\GL_n(R) \to \PGL_n(S) \to \Pic(S)[n]
\end{equation*}
is trivial because it
carries a matrix $M$ to the trivial bundle $\OO_S$. Now suppose that $(L; M) \in
\PGL_n(S)$ has trivial image in the Picard group. Then $L \cong \OO_S$ via some
isomorphism $\varphi \colon L \to \OO_S$. It follows that $(L; M) \sim (\OO_S;
\varphi(M))$ is the image of $\varphi(M) \in \GL_n(R)$.
\end{proof}

\begin{remark}
  \label{rem:brauer2} An alternate proof of Corollary~\ref{cor:exact_seq}
    can be given using sheaf cohomology. By the Skolem-Noether theorem, the
    sequence of sheaves
    \[
      1 \to \mathbb{G}_m \to \GL_n \to \PGL_n \to 1
      \]
    is exact for the Zariski topology. Passing to the long exact sequence on
    sheaf cohomology gives
    \[
      1 \to R^\times \to \GL_n(R) \to \PGL_n(R) \to H^1(S, \mathbb{G}_m) \to H^1(S, \GL_n) \to \cdots
      \]
    It is well known that $H^1(S, \mathbb{G}_m) \cong \Pic(S)$, and that $H^1(S,
    \GL_n)$ is isomorphic to the \textit{set} of vector bundles on $S$ of rank
    $n$. The final map is given by $L \mapsto L^{\oplus n}$. One can use the
    determinant bundle to show that the kernel of this map lies inside the
    $n$-torsion of $\Pic(S)$.
\end{remark}

\begin{corollary}
  \label{cor:local_case}
If $R$ is a commutative ring, then $\GL_n(R)/R^\times$ is a normal subgroup of
$\PGL_n(R)$.  If $R$ is a local ring, then $\PGL_n(R) = \GL_n(R) / R^\times$.
\end{corollary}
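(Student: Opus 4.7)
The plan is to apply Corollary~\ref{cor:exact_seq} with $S = \Spec R$, which gives the exact sequence
\[
1 \to R^\times \to \GL_n(R) \to \PGL_n(R) \to \Pic(R)[n].
\]
By the first isomorphism theorem applied to the middle map, the image of $\GL_n(R)$ in $\PGL_n(R)$ is naturally identified with the quotient $\GL_n(R)/R^\times$. Since the sequence is exact at $\PGL_n(R)$, this image coincides with the kernel of the homomorphism $\PGL_n(R) \to \Pic(R)[n]$. Kernels of group homomorphisms are normal, so $\GL_n(R)/R^\times$ is a normal subgroup of $\PGL_n(R)$. This handles the first assertion without any further work.

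For the local case, I would invoke the standard fact that $\Pic(R) = 0$ when $R$ is local: an invertible $R$-module is a finitely generated projective $R$-module of rank one, and finitely generated projective modules over a local ring are free. In particular $\Pic(R)[n] = 0$, so the final term in the exact sequence vanishes, and the ``In particular'' clause of Corollary~\ref{cor:exact_seq} yields $\PGL_n(R) = \GL_n(R)/R^\times$.

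There really is no serious obstacle here; the content has already been extracted in the preceding corollary, and this result is a packaging of two of its consequences. The only point requiring any care is the justification of $\Pic(R) = 0$ for local $R$, which is a one-line appeal to the freeness of finitely generated projectives over local rings.
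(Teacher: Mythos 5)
Your proof is correct and takes essentially the same approach as the paper: both deduce normality from exactness of the sequence in Corollary~\ref{cor:exact_seq}, identifying $\GL_n(R)/R^\times$ as the kernel of $\PGL_n(R)\to\Pic(R)[n]$, and both handle the local case by observing that $\Pic(R)=0$. You simply make explicit two details the paper's two-line proof leaves implicit, namely the first-isomorphism-theorem identification and the freeness of finitely generated projective modules over a local ring.
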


\begin{proof}
If $S=\spec{R}$, then $R = \Gamma(S,\OO_S)$, and the first result follows from
the preceding corollary.  If $R$ is local, then $\Pic(S)=\Pic(R)=0$, so
$\GL_n(R)\to\PGL_n(R)$ is surjective with kernel $R^\times$.
\end{proof}

At the end of this section, we will exhibit an example of a ring $R$ and an
element of $\PGL_n(R)$ with nontrivial image in $\Pic(R)[n]$.  However, the map
$\PGL_n(S)\to\Pic(S)[n]$ is not always surjective.  In fact we have

\begin{corollary}
Let $S$ be a complete integral variety over an algebraically closed field
$k$. Then
\begin{enumerate}
\item $\PGL_n(S) = \PGL_n(k)$; and
\item the image of $\PGL_n(S)$ in $\Pic(S)[n]$ is zero.
\end{enumerate}
\end{corollary}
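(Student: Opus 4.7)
The plan is to reduce both claims to a single statement via Corollary~\ref{cor:exact_seq}. Since $S$ is a complete integral variety over an algebraically closed field $k$, the ring of global functions is $R = \Gamma(S,\OO_S) = k$, so the exact sequence reads
\[
1 \to k^\times \to \GL_n(k) \to \PGL_n(S) \to \Pic(S)[n].
\]
By Corollary~\ref{cor:local_case} the image of $\GL_n(k)$ in $\PGL_n(S)$ is precisely $\GL_n(k)/k^\times = \PGL_n(k)$. Hence once I show that the map $\PGL_n(S) \to \Pic(S)[n]$ sending $(L;M) \mapsto L$ is zero, claim (2) is immediate and claim (1) follows from exactness.

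To do so, fix $(L;M) \in G_n(S)$ and aim to prove $L \cong \OO_S$. First, $L$ admits a nonzero global section: some entry $m_{i,j}$ of $M$ must be nonzero, since $\det(M) \neq 0$. Next, exploit the group structure on $G_n(S)$: the inverse element is $(L^{\otimes(n-1)};\ad(M))$, and under $L^{\otimes n} \cong \OO_S$ we have $L^{\otimes(n-1)} \cong L^{-1}$. Since $\det(\ad(M)) = \det(M)^{n-1}$ is also nowhere-vanishing, some entry of $\ad(M)$ furnishes a nonzero section $a \in \Gamma(S, L^{-1})$.

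Now I will exploit the integrality of $S$. The restriction map $\Gamma(S,L) \to L_\eta$ to the stalk at the generic point $\eta$ is injective: after trivializing $L$ over an open neighborhood of $\eta$, sections embed into $\kappa(S)$. Hence both $m := m_{i,j}$ and $a$ are nonzero at $\eta$, and so is their tensor product $m \otimes a \in \Gamma(S, L \otimes L^{-1}) = \Gamma(S, \OO_S) = k$. Thus $m \otimes a$ is a nonzero constant, i.e.\ an element of $k^\times$. If $m$ vanished at some point $x \in S$, then $m \otimes a$ would vanish at $x$ as well, contradicting that it is a unit. Therefore $m$ is a nowhere-vanishing global section of $L$, trivializing $L$.

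The main obstacle is producing a nowhere-vanishing global section of $L$ from the mere existence of a matrix of sections with unit determinant; individual entries $m_{i,j}$ need not be nowhere-vanishing. The key trick is to use the group inverse in $G_n(S)$ to obtain nonzero sections of $L^{-1}$ too, so that tensoring lands in $\Gamma(S,\OO_S) = k$, where completeness + integrality + algebraic closure forces constancy.
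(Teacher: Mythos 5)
Your proposal is correct, and its overall skeleton matches the paper's: both reduce (1) to (2) via the exact sequence of Corollary~\ref{cor:exact_seq} together with $\Gamma(S,\OO_S)=k$ and Corollary~\ref{cor:local_case}, and both prove (2) by extracting a nonzero entry of $M$ and showing it is in fact nowhere vanishing, which trivializes $L$. Where you diverge is in the mechanism for that last step. The paper simply takes the nonzero entry $s$ and forms $s^{\otimes n}$, which is a nonzero global section of $L^{\otimes n}\cong\OO_S$, hence a nonzero constant in $k$, hence a unit; one then checks at each stalk (using that $\mm_x$ is prime, so $u^n\notin\mm_x$ forces $u\notin\mm_x$) that $s$ itself is nowhere vanishing. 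You instead invoke the group structure on $G_n(S)$: the inverse $(L^{\otimes(n-1)};\ad(M))$ supplies a nonzero section $a$ of $L^{\otimes(n-1)}\cong L^{-1}$, and you pair $m\otimes a\in\Gamma(S,\OO_S)=k$, using integrality to see the pairing is nonzero at the generic point. This adjugate detour is valid but strictly unnecessary, since $L^{\otimes n}\cong\OO_S$ already lets you land a power of a single section in $\Gamma(S,\OO_S)$; on the other hand, your final vanishing argument ($m_x\in\mm_x L_x$ would force $(m\otimes a)_x\in\mm_x\OO_{S,x}$, contradicting that $m\otimes a\in k^\times$) is arguably cleaner than the paper's appeal to reducedness, and it isolates exactly where integrality enters (injectivity of $\Gamma(S,L)\to L_\eta$ and the field structure of the generic stalk). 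In short: same strategy, different and slightly heavier gadget for manufacturing the unit global function, with a marginally tidier endgame.
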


\begin{proof}
Let $(L;M)$ be an element of $G_n(S)=\PGL_n(S)$.  Since $\det(M)$ generates
$L^{\otimes{}n}$, some entry of $M$ is a nonzero global section of $L$. Suppose
$s$ is such a section. Then $s^{\otimes{}n}$ is a nonzero global section of
$L^{\otimes{}n}\isom\OO_S$.  As $S$ is complete and integral, we have
$\Gamma(S,\OO_S)=k$, and every nonzero section of $\OO_S$ is \emph{nowhere}
vanishing. But because $S$ is reduced, $s$ has a nonzero germ wherever $s^{\otimes{}n}$ does,
so this means that $s$ is also nowhere vanishing. Hence $L\isom\OO_S$, and
$(L;M)$ maps to zero in $\Pic(S)[n]$.  This proves (2).  Now (1) follows from
(2) together with the preceding two corollaries.
\end{proof}


\subsection{Automorphisms and PGL}

Now we turn to the proof that the functors
\begin{equation*}
S \rightsquigarrow \PGL_n(S)\quad\text{and}\quad S \rightsquigarrow\Aut(\PP^{n-1}_S)
\end{equation*}
are naturally isomorphic.  Since maps to projective space are
associated with line bundles, it stands to reason that computing
$\Aut_S(\PP^{n-1}_S)$ involves understanding line bundles on $\PP^{n-1}_S$.  The
following is the key lemma.  It is considered well known, but it seems to be
difficult to find a complete proof in the literature, without extra hypotheses
on $S$.%
\footnote{For example, the result is stated in \cite[pp.19-21]{GIT}, but at a
crucial point they refer to Mumford's ``Lectures on Curves on an Algebraic
Surface'' \cite{Mumford:Curves_on_Surfaces}.  Unfortunately, in that work one of
Mumford's standing assumptions is that all schemes are of finite type over an
algebraically closed field. See also Footnote~\ref{fn:Grothendieck}.}
We will sketch an approach to a proof, with references to all the relevant
ingredients.  We are grateful to Bjorn Poonen for providing us with this road
map. (See \cite[Ex.~1.6.3]{Conrad_RGS} for a sketch of a different argument in
the local case.)


\begin{lemma}
\label{lem:Pic-PP^n_S}
Let $S$ be a connected scheme, $\pi_S:\PP^n_S\to S$ the projection, and $\cL$
any line bundle on $\PP^n_S$.  Then there is a unique integer $d\in\ZZ$, and a
line bundle $L$ on $S$, unique up to isomorphism, such that
$\cL\cong \OO_{\PP^n_S}(d)\otimes\pi_S^*(L)$.
\end{lemma}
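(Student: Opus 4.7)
The plan is to prove uniqueness first (easy), then tackle existence in two stages: pin down a single integer $d$ governing $\cL$ fiberwise, and then produce $L$ as a pushforward along $\pi_S$.

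For uniqueness, suppose $\OO_{\PP^n_S}(d) \otimes \pi_S^* L \cong \OO_{\PP^n_S}(d') \otimes \pi_S^* L'$. Restricting to any fiber $\PP^n_{\kappa(s)}$, where $\Pic \cong \ZZ \cdot \OO(1)$ and every pullback from $S$ becomes trivial, forces $d = d'$. Then $\pi_S^*(L' \otimes L^{-1}) \cong \OO_{\PP^n_S}$, and applying $\pi_{S,*}$ together with the projection formula and the identity $\pi_{S,*}\OO_{\PP^n_S} \cong \OO_S$ (which reduces Zariski-locally on $S$ to the direct computation $H^0(\PP^n_R, \OO) = R$ for a ring $R$) yields $L \cong L'$.

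For existence, for each $s \in S$ the restriction $\cL|_{\PP^n_{\kappa(s)}}$ is isomorphic to $\OO(d(s))$ for a unique integer $d(s)$, since $\Pic(\PP^n_{\kappa(s)}) = \ZZ$. The first step is to show that $s \mapsto d(s)$ is locally constant on $S$; connectedness then produces a single integer $d$. Replace $\cL$ by $\cL' := \cL \otimes \OO_{\PP^n_S}(-d)$ to reduce to the case where $\cL'$ restricts to $\OO$ on every fiber, and set $L := \pi_{S,*}\cL'$. It then remains to check, Zariski-locally on $S$, that $L$ is a line bundle and that the adjunction map $\pi_S^*L \to \cL'$ is an isomorphism. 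Both checks can be carried out over an affine open $\Spec R \subset S$ by using the standard affine cover $\{U_i \cong \Aff^n_S\}$ of $\PP^n_S$ and analyzing transition cocycles: the $\Gm$-weights of the cocycle on overlaps detect $d$ (giving its local constancy), and in the fiberwise-trivial case the cocycle becomes degree zero, which pins down $L$ and the adjunction.

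The main obstacle, signalled by the authors' footnote, is the absence of noetherian hypotheses on $S$. The standard tools — semicontinuity of $h^0$, Grauert's theorem, cohomology and base change — are usually stated only for noetherian bases, and the GIT reference ultimately relies on Mumford's stronger standing assumptions. The technical heart of the proof is thus to replace these tools with direct cocycle computations on the standard affine cover of $\PP^n_S$. The single hardest step is establishing local constancy of $d(s)$; uniqueness and the final adjunction check are comparatively formal.
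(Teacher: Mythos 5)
Your overall skeleton --- uniqueness by restricting to fibers and then pushing forward with the projection formula, existence by showing the fiberwise degree $d(s)$ is locally constant, twisting by $\OO_{\PP^n_S}(-d)$, and recovering $L$ as $\pi_{S,*}\cL'$ --- is the classical seesaw-style argument, and it is a genuinely different route from the paper's proof, which instead invokes Grothendieck's representability of the relative Picard functor $\Pic_{\PP^n_\ZZ/\ZZ}$ and shows the resulting group scheme is smooth with proper components and \'etale of relative dimension zero over $\Spec\ZZ$, hence the constant group scheme $\underline{\ZZ}$. Your uniqueness paragraph is correct as stated.

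However, the step you yourself call the technical heart --- replacing semicontinuity and cohomology-and-base-change by ``direct cocycle computations on the standard affine cover'' --- has a genuine gap. To speak of a transition cocycle, let alone its $\Gm$-weights, you must first trivialize $\cL$ on the charts $U_i \cong \AA^n_S$, at least Zariski-locally on $S$, and no such trivialization is available a priori: the map $\Pic(R) \to \Pic(\AA^n_R)$ fails to be surjective exactly when $R_{\mathrm{red}}$ is not seminormal (Traverso--Swan), e.g.\ for $R = k[u,v]/(v^2-u^3)$, even after localizing, so shrinking $S$ does not help. Proving that $\cL|_{U_i}$ is pulled back from $S$ is essentially equivalent to the lemma itself, so the plan is circular at precisely the hard point. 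Note also that this failure is not a non-noetherian artifact --- the cusp is noetherian --- so your framing of the difficulty is off: even over a noetherian base the cocycle method does not get started, and the classical proofs (Mumford's seesaw argument in GIT) run through $\pi_{S,*}$ and cohomology-and-base-change rather than chart trivializations. Two honest repairs: (i) noetherian approximation --- $\cL$ is finitely presented, so over affine $S = \Spec R$ it descends to $\PP^n_{R_0}$ for some finitely generated subring $R_0 \subseteq R$, after which the noetherian base-change argument applies over $R_0$ and pulls back; or (ii) work over arbitrary $S$ using that $R\pi_{S,*}$ of an $S$-flat finitely presented sheaf on $\PP^n_S$ is a perfect complex, so that local constancy of $m \mapsto \chi\big(\PP^n_{\kappa(s)}, \cL_s(m)\big) = \binom{m+d(s)+n}{n}$ yields local constancy of $d(s)$ (a single Euler characteristic does not suffice, since $\chi(\OO(d)) = 0$ for $-n \le d \le -1$), and base change in degree zero shows $\pi_{S,*}\cL'$ is invertible with the adjunction map an isomorphism. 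Either repair replaces, rather than implements, your cocycle step.
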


\begin{proof}[Sketch of proof]
If $Z$ is any locally noetherian scheme and $X$ is a flat and projective
$Z$-scheme with geometrically integral fibers, a result of
Grothendieck \cite[no.232,Thm. 3.1]{FGA} says that a certain functor from
$Z$-schemes to groups, called the \emph{relative Picard functor}, $\Pic_{X/Z}$,
is representable by a group scheme.  In general, $\Pic_{X/Z}$ is defined by a
sheafification process applied to the functor $S\rightsquigarrow\Pic(X\times_Z{}S)$.  But
under certain hypotheses, $\Pic_{X/Z}$ is isomorphic to the functor
\begin{equation*}
S\rightsquigarrow\Pic(X\times_Z{}S)/\pi_S^*\Pic(S),
\end{equation*}
where $\pi_S:X\times_Z{}S\to{}S$ is the projection.
By \cite[8.1:Prop.4]{Neron_Models}, it suffices that $X$ satisfy the following
three conditions:
\begin{enumerate}
\item $X$ is quasi-compact and quasi-separated;
\item the structure map $f:X\to{}Z$ admits a section; and
\item $\tilde{f}_*(\OO_{\tilde{X}}) = \OO_{\tilde{Z}}$ where
$\tilde{f}:\tilde{X}\to\tilde{Z}$ is the base change of $f$ via any morphism
$\tilde{Z}\to{}Z$.
\end{enumerate}
These conditions, as well as the hypotheses for Groethendieck's result, are all
satisfied when \hbox{$Z=\spec\ZZ$} and \hbox{$X=\PP^n_\ZZ$}.  Therefore
$S\mapsto\Pic(\PP^n_S)/\pi_S^*\Pic(S)$ is representable by a group scheme $\cP$.
Moreover, by \cite[8.2:Thm.5]{Neron_Models}, $\cP$ is a disjoint union of
quasi-projective schemes (each corresponding to line bundles with a fixed
Hilbert polynomial).  And by \cite[8.4:Thm.3]{Neron_Models} (a restatement
of \cite[no.236:Thm.2.1]{FGA}), each of these components is in fact projective.
Moreover, since $H^2(\PP^n_k, \OO_{\PP^n_k})=0$ for any field~$k$,
by \cite[8.4:Prop.2]{Neron_Models} we have that $\cP$ is formally smooth over
$\spec\ZZ$.  Since $\cP$ is also locally of finite presentation (being locally
quasi-projective), it is in fact smooth (\cite[00TN]{Stacks}).

Now if $k$ is any field, we have $\cP(k) = \Pic(\PP^n_k) \isom \ZZ$, with
$d\in\ZZ$ corresponding to the line bundle $\OO_{\PP^n_k}(d)$.  This implies
that each irreducible component of $\cP$ has relative dimension zero over
$\spec\ZZ$, since otherwise, when $k$ is uncountable and algebraically closed,
$\cP(k)$ would also be uncountable.  Thus each component of $\cP$ is smooth of
relative dimension zero, hence \'etale over $\spec\ZZ$.  But $\spec\ZZ$ has no
nontrivial \'etale cover, so we conclude that $\cP$ is a disjoint union of
copies of $\spec\ZZ$, indexed by $\ZZ$.  That is, $\cP$ is the group scheme
$\underline{\ZZ}$ associated to the group $\ZZ$.  This means that $\cP(S)=\ZZ$
for any connected scheme $S$, with $d\in\ZZ$ representing the class of
$\OO_{\PP^n_S}(d)$ in $\Pic(\PP^n_S)/\pi_S^*\Pic(S)$.

Finally, for any scheme $S$, since $\cP(S)=\Pic(\PP^n_S)/\pi_S^*(\Pic(S))$, we
have a short exact sequence of abelian groups:
\begin{equation*}
\begin{tikzcd}
 0 \rar & \pi_S^*(\Pic(S)) \rar & \Pic(\PP^n_S) \rar & \cP(S) \rar & 0.
\end{tikzcd}
\end{equation*}
When $S$ is connected, since $\cP(S)=\ZZ$ this sequence splits, and we conclude
$\Pic(\PP^n_S) \cong \pi_S^*\Pic(S)\oplus\ZZ$, with the $\ZZ$ component generated by
the class of $\OO_{\PP^n_S}(1)$.
\end{proof}


We can now prove the desired isomorphism of functors.  This is also well known,
but with Lemma~\ref{lem:Pic-PP^n_S} in hand the proof is straightforward, so we
will present it.

\begin{theorem}
\label{thm:PGL-is-AUT}
There is a natural isomorphism between the following two functors from the
category of schemes to the category of groups:
\[
S \rightsquigarrow \PGL_n(S) \qquad \text{and} \qquad S \rightsquigarrow \Aut_S(\PP^{n-1}_S).
\]
\end{theorem}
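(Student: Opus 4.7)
The plan is to build natural transformations $\Phi$ and $\Psi$ in both directions and verify they are mutually inverse group isomorphisms, working throughout with the identification $\PGL_n \cong G_n$ of Proposition~\ref{prop:simplePGLn}.

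To construct $\Phi$, I would send a class $(L;M) \in G_n(S)$ to the $S$-morphism $\phi_{(L;M)} \colon \PP^{n-1}_S \to \PP^{n-1}_S$ classified (via the universal property of projective space) by the line bundle $\cL := \OO_{\PP^{n-1}_S}(1) \otimes \pi_S^*L$ together with the global sections $y_i := \sum_{j=1}^n \pi_S^*(m_{i,j}) \otimes x_j$, where $x_1,\ldots,x_n$ are the tautological sections of $\OO(1)$. I would verify fibrewise that the $y_i$ globally generate $\cL$: over $s \in S$, the reduction is an $n\times n$ matrix whose determinant equals $\det(M)(s)$ up to a unit, which is nonzero by hypothesis. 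To see that $\phi_{(L;M)}$ is an automorphism, I would exhibit $\phi_{(L^{\otimes(n-1)};\,\ad M)}$ as its two-sided inverse, using the identity $M \star \ad M = \det(M) \cdot I$ in $G_n$. Well-definedness on equivalence classes and the group homomorphism property are then straightforward.

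For the opposite direction, given $f \in \Aut_S(\PP^{n-1}_S)$, set $\cL := f^*\OO(1)$. Working on a connected open $U \subseteq S$, Lemma~\ref{lem:Pic-PP^n_S} supplies a unique integer $d$ and a line bundle $L_U$ on $U$ with $\cL|_U \cong \OO(d) \otimes \pi_U^*L_U$. Because $f$ is an $S$-morphism, $f^*$ restricts to the identity on $\pi_U^*\Pic(U)$, so it induces an automorphism of the quotient $\Pic(\PP^{n-1}_U)/\pi_U^*\Pic(U) \cong \ZZ$, forcing $d = \pm 1$; the projection formula together with $\pi_*\OO(-1) = 0$ gives $H^0(\PP^{n-1}_U, \OO(-1) \otimes \pi_U^*L_U) = 0$, and since the $f^*x_i$ are nonzero global sections, we conclude $d = 1$. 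The projection formula then identifies $\pi_{U*}\cL|_U$ with $L_U^{\oplus n}$, so each $f^*x_i|_U$ decomposes uniquely as $\sum_j m_{i,j} \otimes x_j$ with $m_{i,j} \in \Gamma(U, L_U)$. Gluing over a cover of $S$ by connected opens --- using that both functors are Zariski sheaves --- assembles these data into a global pair $(L;M)$; fibrewise invertibility of $f$ forces $\det M$ to be a nowhere-vanishing section of $L^{\otimes n}$, so $(L;M) \in G_n(S)$.

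The principal obstacle is extracting the line bundle $L$ from an abstract automorphism, which is exactly where Lemma~\ref{lem:Pic-PP^n_S} does the heavy lifting; the fiddly point within that step is ruling out $d = -1$ via cohomology of $\OO(-1)$. Once $\Phi$ and $\Psi$ are in place, I would check that they are mutually inverse by direct tracing: $\Psi \circ \Phi = \mathrm{id}$ because $\phi_{(L;M)}^*\OO(1) = \OO(1) \otimes \pi_S^*L$ by construction and the decomposition of $\phi_{(L;M)}^*x_i$ returns $m_{i,j}$; and $\Phi \circ \Psi = \mathrm{id}$ because the $S$-morphism classified by $(\cL; f^*x_1, \ldots, f^*x_n)$ is $f$ itself by the universal property. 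Naturality in $S$ of both transformations is automatic from functoriality of pullback of line bundles and sections.
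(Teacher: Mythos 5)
Your proposal is correct, and it follows the paper's overall strategy: identify $\PGL_n$ with $G_n$, build maps in both directions, and use Lemma~\ref{lem:Pic-PP^n_S} as the key input for extracting the pair $(L;M)$ from an abstract automorphism. You diverge from the paper in two sub-steps, both defensibly. For the forward map, the paper trivializes $L$ on an affine open cover, lets each $\phi_i(M)$ act on $\PP^{n-1}_{U_i}$, and glues the resulting local automorphisms; you instead construct the morphism globally from the classifying data $\bigl(\OO(1)\otimes\pi_S^*L;\; y_i=\sum_j \pi_S^*(m_{i,j})\otimes x_j\bigr)$ and exhibit a two-sided inverse via the tensor adjugate, using $M\star\ad(M)=\det(M)\,I$. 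This is more canonical, dovetails with the explicit point-action formula of Corollary~\ref{cor:point-action}, and trades the paper's gluing check for a fibrewise global-generation check plus compatibility of composition with the $G_n$ group law. For pinning down the degree, the paper computes $\dim_{\kappa(s)}H^0(\PP^{n-1}_s,\OO(d))=n$ on a single fiber and gets $d=1$ in one stroke; you first deduce $d=\pm1$ from the automorphism that $f^*$ induces on $\Pic(\PP^{n-1}_U)/\pi_U^*\Pic(U)\cong\ZZ$ (legitimate, since $\pi_U\circ f=\pi_U$ gives $f^*\pi_U^*=\pi_U^*$), then rule out $d=-1$ via $\pi_{U*}\OO(-1)=0$ --- two steps where the paper needs one, but sound. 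You also justify the unique decomposition $f^*x_i=\sum_j m_{i,j}\otimes x_j$ by the projection formula, where the paper merely asserts it. One caveat, which you in fact share with the paper: your gluing step presumes $S$ has a cover by connected open sets (the paper's reduction likewise presumes $S$ is a disjoint union of connected schemes), and this fails for schemes that are not locally connected; the cleanest repair is to observe that once $d=1$ is known on fibers, $L$ is recovered canonically as $\pi_{S*}\bigl(\cL\otimes\OO(-1)\bigr)$, so no choices need to be glued.
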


\begin{proof}
For a scheme $S$, we will construct an isomorphism of groups $\PGL_n(S) \cong
\Aut_S(\PP^{n-1}_S)$. The reader will easily verify that the entire construction
is compatible with base change along a morphism $T \to S$, so that our
isomorphisms are natural in the scheme $S$. 

We first reduce to the case of $S$ connected.  If $S$ is a disjoint union
$S=\coprod_i S_i$ with each $S_i$ connected, then it is clear that $\PGL_n(S) =
\prod_i \PGL_n(S_i)$, since $\PGL_n(S)$ is the set of morphisms from $S$ to the
group scheme $\PGL_n$.  It is also clear that $\PP^{n-1}_S = \coprod_i
\PP^{n-1}_{S_i}$, and that any $S$-automorphism must stabilize each
$\PP^{n-1}_{S_i}$.  It follows that also $\Aut_S(\PP^{n-1}_S)=\prod_i
\Aut(\PP^{n-1}_{S_i})$.  Thus if we prove that
$\PGL_n(S_i)\isom\Aut(\PP^{n-1}_{S_i})$ for each $i$, we get that
$\PGL_n(S)\isom\Aut(\PP^{n-1}_S)$.

Henceforth we assume $S$ is connected.  To define a map
$\PGL_n(S)\to\Aut_S(\PP^{n-1}_S)$, let $(L;M)$ represent an element of
$\PGL_n(S)$, and choose an affine open cover $\setof{U_i}$ of $S$ such that
$L|_{U_i}$ is trivial, with isomorphisms
$\phi_i:L|_{U_i}\buildrel\cong\over\to\OO_S|_{U_i}$.  Let $R_i=\OO_S(U_i)$.
Then $\PP^{n-1}_{U_i}=\Proj{}R_i[x_1,\dots,x_n]$, and the matrix $\phi_i(M)$,
with entries in $R_i$, gives an automorphism of $\PP^{n-1}_{U_i}$.  On $U_i\cap
U_j$, with $R_{i,j}=\OO_S(U_i\cap U_j)$, the $R_{i,j}$-module automorphism
$\phi_j\phi_i^{-1}$ is multiplication by some element of $R_{i,j}^{\times}$, so
we see that $\phi_i(M)$ and $\phi_j(M)$ induce the same automorphism on
$\Proj{}R_{i,j}[x_1,\dots,x_n]$, and we get a well defined automorphism of
$\PP^{n-1}_S$.

To go the other way, an automorphism $\alpha\in\Aut_S(\PP^{n-1}_S)$ is specified
by the line bundle \hbox{$\cL=\alpha^*(\OO_{\PP^{n-1}_S}(1))$} and a basis of
global sections $\setof{s_i=\alpha^*(x_i)\st i=1,\dots,n}$.  By
Lemma~\ref{lem:Pic-PP^n_S}, there is a line bundle $L$ on $S$ and an integer $d$
such that $\cL\isom\pi_S^*(L)\otimes\OO_{\PP^{n-1}_S}(d)$.  Note
that $\dim_{\funfld(s)}H^0(s,\OO_{\PP^{n-1}_s}(1))=n$ for any
point $s\in{}S$.  Therefore
also
\begin{equation*}
\begin{aligned}
n &= \dim_{\funfld(\alpha(s))}H^0(\PP^{n-1}_{\alpha(s)},\OO_{\PP^{n-1}_{\alpha(s)}}(1)) \\
&= \dim_{\funfld(s)}H^0(\PP^{n-1}_s,\alpha^*\big(\OO_{\PP^{n-1}_s}(1)\big)) \\
&= \dim_{\funfld(s)}H^0(\PP^{n-1}_s,\OO_{\PP^{n-1}_s}(d)) \\ 
\end{aligned}
\end{equation*}
since $\pi_S^*(L)$ is trivial on $\PP^{n-1}_s$.  This clearly implies $d=1$.

Now we have $\cL\isom\pi_S^*(L)\otimes\OO_{\PP^{n-1}_S}(1)$, and so the global
sections $s_i$ defining $\alpha$ are linear combinations of $x_1,\dots,x_n$ with
coefficients in $\Gamma(S,L)$.  Thus there are elements $M_{i,j}\in\Gamma(S,L)$
such that \hbox{$s_i=\sum_jM_{i,j}\otimes{}x_j$}.  Let $M$ be the matrix with entries
$M_{i,j}$.  Then we map $\alpha$ to the class of $(L;M)$ in $\PGL_n(S)$.  It is
straightforward to show that this is inverse to the map given above.
\end{proof}

Our primary interest is the action of $\Aut_S(\PP_S^{n-1})$ on the $S$-points of
$\PP^{n-1}$.  Identifying the automorphism group with $G_n(S)$ as in the
Theorem, we can make the action explicit as follows.  An $S$-point is a morphism
$a\colon S\to\PP^{n-1}$.  By \cite[II.7.1]{Hartshorne_Bible}, such morphisms are
in bijective correspondence with equivalence classes of data
$(A;a_1,\ldots,a_n)$, where $A = a^*\OO_{\PP^{n-1}}(1)$ is a line bundle on $S$,
and $a_i = a^*(x_i)$ are global sections of $A$ that generate $A$. A second set
of data $(A;a_1', \ldots, a_n')$ determines the same morphism if and only if
there is an $\OO_S$-isomorphism $\varphi \colon A \to A'$ such that
$\varphi(a_i) = a_i'$ for $i = 1, \ldots, n$.  Then tracing through the map
$G_n(S)\to\Aut_S(\PP_S^{n-1})$ defined in the proof of
Theorem~\ref{thm:PGL-is-AUT} gives the following description.

\begin{corollary}
\label{cor:point-action}
Let $\sigma \in G_n(S) \cong \PGL_n(S)$ be given by the data $(L; M)$. Then
$\sigma$ acts on a point \hbox{$a=(A;a_1,\dots,a_n)\in\PP^{n-1}(S)$} via
\[
\sigma(a) = \left(L \otimes A; M\star\mat{a_1\\\vdots\\a_n} \right),
\]
where the column vector $M\star\mat{a_1\\\vdots\\a_n}$ can be viewed as $n$
global sections of $L \otimes A$ that generate $L \otimes A$. 
\end{corollary}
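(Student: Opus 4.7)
The plan is to unwind the explicit bijection $G_n(S) \stackrel{\sim}{\to} \Aut_S(\PP^{n-1}_S)$ constructed in the proof of Theorem~\ref{thm:PGL-is-AUT}, and then compute the composition $\sigma(a) = \alpha_\sigma \circ a \colon S \to \PP^{n-1}_S$, where $\alpha_\sigma$ is the automorphism attached to $(L;M)$. Since an $S$-point of $\PP^{n-1}_S$ is recorded (up to equivalence) by its pullback of $\OO_{\PP^{n-1}_S}(1)$ together with its pullback of the tautological sections $x_1,\dots,x_n$, the task reduces to computing these two pieces of data for $\alpha_\sigma \circ a$.

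First I would identify the line bundle. The proof of Theorem~\ref{thm:PGL-is-AUT} shows $\alpha_\sigma^*\OO_{\PP^{n-1}_S}(1) \cong \pi_S^*(L) \otimes \OO_{\PP^{n-1}_S}(1)$. Because $a$ is an $S$-point, we have $\pi_S \circ a = \mathrm{id}_S$, so functoriality of pullback gives
\[
(\alpha_\sigma \circ a)^*\OO_{\PP^{n-1}_S}(1) \;\cong\; a^*\pi_S^*(L)\otimes a^*\OO_{\PP^{n-1}_S}(1) \;\cong\; L \otimes A.
\]
This matches the line bundle in the claimed formula.

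Next I would compute the sections locally. Choose an affine open cover $\{U_i\}$ of $S$ trivializing both $L$ and $A$, via isomorphisms $\phi_i \colon L|_{U_i}\stackrel{\sim}{\to}\OO_S|_{U_i}$ and $\psi_i \colon A|_{U_i}\stackrel{\sim}{\to}\OO_S|_{U_i}$. Over $U_i$, the automorphism $\alpha_\sigma$ is the classical $\PGL_n$-automorphism of $\PP^{n-1}_{U_i}$ given by the invertible matrix $\phi_i(M)$, so $\alpha_\sigma^*(x_k) = \sum_j \phi_i(M_{k,j})\, x_j$. Pulling back further along $a|_{U_i}$, whose sections are $\psi_i(a_1),\dots,\psi_i(a_n)$, the $k$-th section of $\sigma(a)$ on $U_i$ is $\sum_j \phi_i(M_{k,j})\,\psi_i(a_j)$. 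Undoing the trivializations $\phi_i$ and $\psi_i$ simultaneously reads this off as $\sum_j M_{k,j} \otimes a_j \in \Gamma(U_i, L\otimes A)$, which is exactly the $k$-th component of $M\star(a_1,\dots,a_n)^{\top}$.

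Finally I would verify that these local formulas glue to honest global sections of $L\otimes A$, and that their classes are unaffected by the two equivalence relations in play (on $G_n(S)$ and on $\PP^{n-1}(S)$). Both are straightforward: transitioning from $\phi_i,\psi_i$ to $\phi_j,\psi_j$ multiplies $\phi_\bullet(M_{k,j})$ and $\psi_\bullet(a_j)$ by inverse units whose product is $1$ in the tensor; and replacing $(L;M)$ or $(A;a_1,\dots,a_n)$ by an equivalent representative produces the same class of $(L\otimes A;\, M\star(a_1,\dots,a_n)^{\top})$. The fact that the resulting sections generate $L\otimes A$ follows because $\det(M)\in\Gamma(S, L^{\otimes n})$ is nowhere-vanishing, so the linear action is pointwise invertible.

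The main obstacle is the bookkeeping between the two trivializations and the tensor notation: keeping straight which pullback acts on which factor, and checking that the tensor product $M\star(a_1,\dots,a_n)^{\top}$ is exactly the coordinate-free object that absorbs the local cocycle data from $\phi_i$ and $\psi_i$. Once that is organized, the corollary follows without further computation.
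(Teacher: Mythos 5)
Your proposal is correct and is exactly what the paper does: its entire ``proof'' of Corollary~\ref{cor:point-action} is the remark that one traces through the map $G_n(S)\to\Aut_S(\PP^{n-1}_S)$ constructed in Theorem~\ref{thm:PGL-is-AUT}, and your argument carries out precisely that tracing (line bundle via $\alpha_\sigma^*\OO(1)\cong\pi_S^*(L)\otimes\OO(1)$ and $\pi_S\circ a=\mathrm{id}_S$, sections via the local trivializations $\phi_i,\psi_i$, then gluing and checking independence of representatives). The only detail worth noting is that generation of $L\otimes A$ by the sections also follows for free from \cite[II.7.1]{Hartshorne_Bible}, since $\sigma(a)=\alpha_\sigma\circ a$ is already a morphism to projective space, though your direct argument via the nowhere-vanishing of $\det(M)$ is equally valid.
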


\begin{remark}
This shows how to find an example of an $S$ such that $\Aut_S(\PP^{n-1})$ does not
act transitively on $\PP^{n-1}(S)$.  Indeed, a consequence of
Corollary~\ref{cor:point-action} is that if $a=(A;a_1,\dots,a_n)$ and
$b=(B;b_1,\dots,b_n)$ are in the same orbit under $\Aut_S(\PP^{n-1})$, then
$A\otimes{}B^{-1}$ must represent an $n$-torsion class in $\Pic(S)$.  For
example, let $K$ be a number field with class number divisible by 3, and let
$A\subset{}R:=\mathcal{O}_K$ be an ideal representing an element of order 3 in the
class group.  Let $a_1,a_2$ be generators of $A$, so that $(A;a_1,a_2)$
represents a point $a\in\PP^1(R)$.  We also have the point $b=(B;b_1,b_2)$ with
$B=R$, $b_1=0$, and $b_2=1$.  Then the class of $A\otimes B^{-1}=A$ is not
$2$-torsion, so $A$ and $B$ are in different orbits.
\end{remark}


\subsection{The Case of a Domain}

If $S$ is an integral scheme, then every line bundle $L$ on $S$ embeds into the
constant sheaf of rational functions $\mathcal{K}_S$. Global sections of $L$ map
to elements of the rational function field $K = \funfld(S)$. Via these
identifications, it is possible to identify $\PGL_n(S)$ with a subgroup of
\hbox{$\PGL_n(K) = \GL_n(K) / K^\times$}. We prove this after a few preliminary
results.

\begin{lemma}
\label{lem:auts-agree}
Let $X$ be a reduced scheme, and let $\alpha,\beta \colon X \to Y$ be a pair of
morphisms.  Suppose \hbox{$\alpha(x)=\beta(x)$} for every $x\in{}X$, and the induced
maps on local rings agree at every generic point of~$X$. Then $\alpha=\beta$.
\end{lemma}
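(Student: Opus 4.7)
The strategy is to reduce the problem to showing that two ring homomorphisms with reduced target are equal, then exploit the fact that a reduced ring injects into the product of its localizations at minimal primes.

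First, I would reduce to the affine case. Since $\alpha$ and $\beta$ have the same underlying continuous map, they have identical set-theoretic preimages of every open subset of $Y$, and equality of the two morphisms can be checked locally on $Y$ and then on $X$. Cover $Y$ by affine opens $V = \Spec B$ and cover each common preimage $\alpha^{-1}(V) = \beta^{-1}(V)$ by affine opens $U = \Spec A$ of $X$. Each such $U$ is reduced (being open in the reduced scheme $X$), and its generic points are among the generic points of $X$, so the hypothesis on generic stalks restricts to $U$. The restrictions $\alpha|_U, \beta|_U \colon U \to V$ correspond to ring homomorphisms $\varphi, \psi \colon B \to A$, and the lemma reduces to showing $\varphi = \psi$ whenever $A$ is reduced and the two maps induce the same localization at every minimal prime of $A$.

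For the core argument, fix $b \in B$ and let $f = \varphi(b) - \psi(b) \in A$. Because $A$ is reduced, the intersection of its minimal primes is zero, so the natural map $A \hookrightarrow \prod_{\pp} A_{\pp}$ (product over minimal primes $\pp$ of $A$) is injective. It therefore suffices to show $f$ vanishes in $A_{\pp}$ for each such $\pp$. Let $\eta$ be the corresponding generic point of $\Spec A$. Since $\alpha(\eta) = \beta(\eta)$ by hypothesis, the pullback primes $\varphi^{-1}(\pp)$ and $\psi^{-1}(\pp)$ coincide, and the localizations of $\varphi$ and $\psi$ at this pair of primes are precisely the induced stalk maps $\alpha^{\#}_{\eta}, \beta^{\#}_{\eta} \colon \OO_{Y, \alpha(\eta)} \to \OO_{X, \eta}$. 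These agree by hypothesis, so $f$ maps to zero in $A_{\pp}$, completing the argument.

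The proof is largely formal. The only substantive ingredient is the embedding $A \hookrightarrow \prod_{\pp} A_{\pp}$ for $A$ reduced; once this is in place, the argument is immediate. The main bookkeeping is to verify that openness preserves reducedness and that the generic points of an affine open are themselves generic points of $X$---both standard. Note that the conclusion does not require $Y$ to be separated, so one cannot replace this argument with the usual dense-open-plus-closed-equalizer trick, which is why working directly at stalks is essential.
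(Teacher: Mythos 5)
Your proof is correct and takes essentially the same route as the paper's: reduce to the affine case using the fact that the underlying continuous maps coincide, then show the difference of the two ring maps on each element vanishes after localizing at every minimal prime, and conclude by reducedness. Your packaging of the final step as the injection $A \hookrightarrow \prod_{\pp} A_{\pp}$ over minimal primes $\pp$ is just a restatement of the paper's observation that an element lying in every minimal prime of a reduced ring is nilpotent, hence zero.
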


\begin{proof}
Since $\alpha$ and $\beta$ agree as continuous maps, it suffices to work locally
and compare the induced morphisms of ringed structures. To that end, we may
assume that $X = \Spec B$ and $Y = \Spec A$. Then $\alpha$ and $\beta$ are
induced by ring homomorphisms $f,g \colon A \to B$, respectively. For any prime
ideal $\pp$ of $B$, we know that $f^{-1}(\pp) = g^{-1}(\pp) = :\qq$. We have the
following commutative diagram (for~$f$~or~for~$g$):
\begin{equation*}
  \begin{tikzcd}
    A
    \ar[r,"f{,}g"]
    \ar[d]
    & B
    \ar[d] \\
    A_\qq
    \ar[r,"f_{\pp}{,}g_{\pp}"]
    & B_\pp.
  \end{tikzcd}
\end{equation*}
The hypothesis that $\alpha$ and $\beta$ agree on generic points shows that
$f_\pp = g_\pp$ when $\pp$ is a minimal prime ideal. Suppose that $a \in A$, and
consider the element $b = f(a) - g(a)$. The above diagram shows that $b$
vanishes in $B_\pp$ for every minimal prime ideal $\pp$ of $B$.  In particular
this means that $b$ is contained in $\pp$.  Since every prime ideal contains a
minimal prime (using Zorn's Lemma), it follows that $b$ lies in every prime
ideal of $B$, so that $b$ is nilpotent. As $X$ is reduced, we conclude that $b =
0$. Hence $\alpha$ and $\beta$ agree.
\end{proof}

\begin{remark}
The hypothesis that $X$ is reduced is essential in Lemma~\ref{lem:auts-agree}.
For example, take
\begin{equation*}
X=\spec\ZZ_p[t]/(t^2,pt).
\end{equation*}
Then the automorphism induced by
$t\mapsto-t$ is the identity map on points, and induces the identity map on the
local ring at the generic point $(t)$, but is clearly not the identity map on
$X$.
\end{remark}

\begin{lemma}
\label{lem:auts-on-separated}
Let $S$ be an irreducible scheme with generic point $\eta\in{S}$, and let $X$ be
an irreducible $S$-scheme with separated structure map $\pi:X\to{}S$.  Given an
automorphism $\alpha\in\Aut_S(X)$, let $\alpha_\eta$ be the restriction of
$\alpha$ to $X_\eta=\pi^{-1}(\eta)$.  If $\alpha_\eta$ is the identity morphism,
then $\alpha(x)=x$ for all $x\in{}X$.
\end{lemma}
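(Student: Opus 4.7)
The plan is to realize the locus where $\alpha$ and $\mathrm{id}_X$ agree as the scheme-theoretic equalizer $E$ of $\alpha$ and $\mathrm{id}_X$, to use separatedness of $\pi$ to make $E\hookrightarrow X$ a closed immersion, and then to use the hypothesis to force $E$ to equal $X$ topologically. Concretely, I would form $E$ as the pullback of the diagonal $\Delta\colon X\to X\times_S X$ along $(\alpha,\mathrm{id}_X)\colon X\to X\times_S X$. Because $\pi$ is separated, $\Delta$ is a closed immersion, and hence so is $E\hookrightarrow X$.

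By the universal property of the equalizer, a morphism $T\to X$ factors through $E$ precisely when its compositions with $\alpha$ and with $\mathrm{id}_X$ coincide. Applied to the inclusion $\iota\colon X_\eta\hookrightarrow X$, the hypothesis $\alpha_\eta = \mathrm{id}_{X_\eta}$ reads $\alpha\circ\iota = \iota$, so $\iota$ factors through $E$; hence $X_\eta\subseteq E$ as subsets of $X$. The hypothesis is substantive only when $X_\eta$ is non-empty, so I may assume this; then $\eta\in\pi(X)$, and combined with $S = \overline{\{\eta\}}$ this makes $\pi$ dominant, whence $\pi(\xi)=\eta$ for the generic point $\xi$ of $X$. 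Thus $\xi\in X_\eta\subseteq E$. Because $E$ is closed in the irreducible space $X$ and contains $\xi$, we conclude $E = X$ topologically; for every $x\in X$ we then have $x\in E$, and the equalizer identity $\alpha\circ\iota_E = \iota_E$ yields $\alpha(x)=x$, as required.

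The step I expect to require the most care is the passage from the scheme-theoretic inclusion $X_\eta\subseteq E$ to the point-set statement $\xi\in E$, which rests on confirming that $\pi$ is dominant; this is precisely where the non-emptiness of $X_\eta$ and the irreducibility of $S$ get used together. Separatedness of $\pi$ is invoked exactly once, at the first step, to make $E$ closed in $X$; after that, irreducibility of $X$ does the topological work, and the scheme-theoretic precision of the equalizer is what upgrades topological containment into the pointwise identity $\alpha(x)=x$.
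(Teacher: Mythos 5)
Your proof is correct, and while it travels through the same three ingredients as the paper's proof (separatedness gives closedness, the hypothesis gives agreement at the generic point, irreducibility spreads that to all of $X$), the packaging is genuinely different and worth comparing. The paper works inside $X\times_S X$: it takes $\Gamma$ to be the image of the graph morphism $1\times\alpha=(1,\alpha)\circ\Delta$, needs the observation that $(1,\alpha)$ is an automorphism in order to conclude that $1\times\alpha$ is a closed immersion, and then proves $\Gamma=\Delta(X)$ by exhibiting both as closures of images of the generic point $\xi$ of $X$; projecting back down gives $\alpha(x)=x$. You instead pull $\Delta$ back along $(\alpha,\mathrm{id}_X)$ to get the equalizer $E\subseteq X$ --- precisely the construction of Proposition~\ref{prop:exists}, which in the paper's layout appears only in the later section on equalizers, so the paper could not lean on it here --- and closedness of $E$ is then immediate from base-change stability of closed immersions, with no need for the $(1,\alpha)$ trick; surjectivity of $|E|\to|X|$ replaces the comparison of two closed subschemes. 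Two further points in your favor: you verify explicitly that $\pi(\xi)=\eta$, a dominance fact the paper uses silently when it writes $(1\times\alpha)(\xi)=(1\times\alpha_\eta)(\xi)$; and you flag the standing assumption $X_\eta\neq\varnothing$. That assumption is in fact necessary, not merely convenient: if $X_\eta=\varnothing$ then $\alpha_\eta$ is vacuously the identity, yet the conclusion can fail (take $S=\Spec\ZZ$ and $X=\PP^1_{\FF_p}$ with a nontrivial automorphism), so your restriction quietly repairs the statement as literally written; it is harmless for the paper's application in Proposition~\ref{prop:aut-embeds-in-generic}, where the relevant scheme $Z$ is the closure of a generic point lying in the generic fiber.
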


\begin{proof}
Let $\Gamma\subset{}X\times_SX$ be the graph of $\alpha$ --- \emph{i.e.}, $\Gamma$ is
the image of the morphism $1\times\alpha: X\to{}X\times_S{}X$.  Also, let
$\Delta$ be the diagonal morphism.  Suppose we knew that $\Gamma=\Delta(X)$.  Then
for any $x\in{}X$ we would have $(1\times\alpha)(x)=\Delta(x')$ for some
$x'\in{}X$.  Then applying the first projection gives $x=x'$, and applying the
second projection gives $\alpha(x)=x'$, so we would get $\alpha(x)=x$.  Thus it
suffices to prove that $\Gamma=\Delta(X)$.

Note that $1\times\alpha = (1,\alpha)\circ\Delta$, where $(1,\alpha)$ is the map
from $X\times_SX$ to itself that is the identity on the first factor and
$\alpha$ on the second.  Evidently $(1,\alpha)$ is an automorphism, as it has
the inverse $(1,\alpha^{-1})$.  Since $X$ is separated over $S$, $\Delta$ is a
closed immersion, and we find that $1\times\alpha$ is also a closed immersion.
Thus $\Gamma$ is the closure of $(1\times\alpha)(\xi)$ where $\xi$ is the
generic point of $X$.  Since $\Delta(X)$ is also the closure of $\Delta(\xi)$,
it suffices to prove that $(1\times\alpha)(\xi) = \Delta(\xi)$.  But on $X_\eta$
we have \hbox{$1\times\alpha_\eta = \Delta_{X_\eta}$, so $(1\times\alpha)(\xi) =
(1\times\alpha_\eta)(\xi) = \Delta_\eta(\xi) = \Delta(\xi)$}.
\end{proof}

\begin{proposition}
\label{prop:aut-embeds-in-generic}
Let $S$ be an integral scheme with field of fractions $K = \funfld(S)$. Let $X$
be a reduced and separated $S$-scheme such that every irreducible component of
$X$ dominates $S$. Then there is a natural inclusion $\Aut_S(X) \hookrightarrow
\Aut_K(X_K)$, where $X_K$ is the generic fiber of $X$.
\end{proposition}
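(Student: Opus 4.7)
The plan is to construct the map $\Aut_S(X) \to \Aut_K(X_K)$ by base change, which is obviously a group homomorphism since $(-)\times_S \Spec K$ is a functor, and then prove injectivity. Equivalently, we show that if $\alpha \in \Aut_S(X)$ satisfies $\alpha_K = \mathrm{id}_{X_K}$, then $\alpha = \mathrm{id}_X$. The proof will combine the two preceding lemmas, with the hypothesis that every irreducible component of $X$ dominates $S$ serving to reduce the reducible case to the irreducible one.

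First I would identify the generic points of $X$. Let $Z_1, \ldots, Z_r$ be the irreducible components of $X$ (endowed with the reduced induced subscheme structure), with generic points $\xi_1, \ldots, \xi_r$. Since each $Z_i$ dominates $S$, each $\xi_i$ maps to the generic point $\eta$ of $S$, so $\xi_i \in X_\eta = X_K$, and $\{\xi_1, \ldots, \xi_r\}$ are precisely the generic points of $X_K$. The assumption $\alpha_K = \mathrm{id}$ therefore gives $\alpha(\xi_i) = \xi_i$ for each $i$. Because $\alpha$ is an automorphism, $\alpha(Z_i) = \overline{\{\alpha(\xi_i)\}} = Z_i$ as topological spaces, so $\alpha$ restricts to an $S$-automorphism $\alpha_i$ of the integral scheme $Z_i$.

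Next I would verify the hypotheses of Lemma~\ref{lem:auts-on-separated} for each $Z_i$. The composition $Z_i \hookrightarrow X \to S$ is separated (closed immersions are separated, $X \to S$ is separated by hypothesis, and a composition of separated morphisms is separated), and $Z_i$ is irreducible with generic point $\xi_i \in X_K$. Since $\alpha_K = \mathrm{id}$ on $X_K$, the restriction $(\alpha_i)_\eta$ is the identity on the generic fiber of $Z_i \to S$. Lemma~\ref{lem:auts-on-separated} then yields $\alpha_i(z) = z$ for every $z \in Z_i$. Since $X = \bigcup_i Z_i$ as a topological space, we conclude that $\alpha$ is the identity on the underlying space of $X$.

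Finally I would invoke Lemma~\ref{lem:auts-agree} with $\beta = \mathrm{id}_X$: $X$ is reduced by hypothesis, $\alpha$ agrees with the identity on all points by the previous paragraph, and at each generic point $\xi_i$ of $X$ the induced map on local rings agrees with that of the identity because $\OO_{X,\xi_i} = \OO_{X_K,\xi_i}$ and $\alpha_K = \mathrm{id}$. Thus $\alpha = \mathrm{id}_X$ as morphisms of schemes, completing the proof of injectivity. The main subtlety is the bookkeeping across irreducible components: without the domination hypothesis, a component could fail to meet $X_K$, and the vanishing of $\alpha_K$ would give no information about $\alpha$ on that component; with domination in hand, the two lemmas compose cleanly to yield the result.
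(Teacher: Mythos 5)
Your proof is correct and takes essentially the same route as the paper's: use the domination hypothesis to place all generic points of $X$ in $X_K$, restrict $\alpha$ to the closure of each generic point with its reduced induced structure, apply Lemma~\ref{lem:auts-on-separated} to each to get $\alpha(x)=x$ pointwise, and conclude with Lemma~\ref{lem:auts-agree}. The only cosmetic differences are that you enumerate finitely many components $Z_1,\dots,Z_r$ (an unneeded finiteness assumption --- just index them by an arbitrary set) and that you spell out, more explicitly than the paper does, why $\alpha$ restricts to an automorphism of each component and why the local-ring condition at the $\xi_i$ follows from $\alpha_K=\mathrm{id}$.
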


\begin{proof}
The restriction of an automorphism of $X \to S$ to the generic fiber is an
automorphism, so we obtain a natural homomorphism $\Aut_S(X) \rightarrow
\Aut_K(X_K)$. We must show that this homomorphism is injective.

Every generic point of $X$ lives in the fiber over $\Spec K$. If $\alpha \in
\Aut_S(X)$ is trivial when restricted to the generic fiber, then $\alpha$ agrees
with the identity at all generic points of $X$.  Let $\zeta\in{}X$ be one of
those generic points, and let $Z\subset{}X$ be its closure, with the reduced
induced scheme structure.  Since the inclusion $Z\hookrightarrow{}X$ is a closed
immersion, in particular it is separated, and therefore $Z$ is separated over
$S$.  Then by Lemma~\ref{lem:auts-on-separated}, we have $\alpha(z)=z$ for every
point $z\in{}Z$.  Since every point of $X$ is in the closure of some generic
point, we find that $\alpha(x)=x$ for all $x\in{}X$.  Then
Lemma~\ref{lem:auts-agree} shows that $\alpha$ is the identity morphism.
\end{proof}

\begin{remark}
The hypothesis that $X$ be separated is necessary in both
Lemma~\ref{lem:auts-on-separated} and
Proposition~\ref{prop:aut-embeds-in-generic}.  A counterexample to both results
is provided by the affine line with doubled origin: Let $S=\AA^1$, and let
$X=X_1\cup X_2$ where $X_1=X_2=\AA^1$ and $X_1\smallsetminus\setof{0}$ is identified with
$X_2\smallsetminus\setof{0}$, and $X\to{}S$ is the obvious map.  The automorphism defined by
interchanging points of $X_1$ and $X_2$ acts as the identity on the dense open
subset $X_1\cap X_2$, but swaps the two copies of the point $0$.
\end{remark}

\begin{corollary}
Let $R$ be a domain with field of fractions $K$. There is a natural inclusion
\begin{equation*}
\PGL_n(R) \hookrightarrow \PGL_n(K) \cong \GL_n(K) / K^\times.
\end{equation*}

\end{corollary}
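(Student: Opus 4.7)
The plan is to apply Proposition~\ref{prop:aut-embeds-in-generic} with $S = \Spec R$ and $X = \PP^{n-1}_R$, then translate the resulting inclusion of automorphism groups into an inclusion of $\PGL_n$'s using Theorem~\ref{thm:PGL-is-AUT} and Corollary~\ref{cor:local_case}.

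First I would verify that the hypotheses of Proposition~\ref{prop:aut-embeds-in-generic} are satisfied. Since $R$ is a domain, $S = \Spec R$ is integral with function field $K$. The projective space $\PP^{n-1}_R$ is separated over $S$ (projective implies separated). Because $R$ is a domain, $\PP^{n-1}_R$ is actually integral: it is covered by the standard affine opens $\Spec R[x_1/x_i, \dots, x_n/x_i]$, each of which is a polynomial ring over a domain and hence a domain, and these pieces share a common nonempty open subset. In particular, $\PP^{n-1}_R$ is reduced and has a unique irreducible component, which maps to the generic point of $S$ (since the generic point $(0) \in \Spec R$ lies in the image of the structure map).

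Having checked the hypotheses, Proposition~\ref{prop:aut-embeds-in-generic} furnishes a natural injection
\[
\Aut_S(\PP^{n-1}_S) \hookrightarrow \Aut_K(\PP^{n-1}_K),
\]
where $\PP^{n-1}_K$ is the generic fiber. Now I would invoke Theorem~\ref{thm:PGL-is-AUT} to rewrite each side: the left becomes $\PGL_n(R) = \PGL_n(S)$, and the right becomes $\PGL_n(K)$. Naturality of the isomorphism in Theorem~\ref{thm:PGL-is-AUT} ensures that the resulting map $\PGL_n(R) \hookrightarrow \PGL_n(K)$ is precisely the one induced by the base change $\Spec K \to \Spec R$, so it agrees with the ``natural'' inclusion that the corollary asserts.

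Finally, to identify $\PGL_n(K)$ with $\GL_n(K)/K^\times$, I would apply Corollary~\ref{cor:local_case}: since $K$ is a field, it is certainly a local ring, so $\PGL_n(K) = \GL_n(K)/K^\times$. There is no real obstacle here; the only subtle point is confirming the integrality of $\PP^{n-1}_R$, which guarantees both reducedness and the ``every component dominates $S$'' condition needed to invoke Proposition~\ref{prop:aut-embeds-in-generic}.
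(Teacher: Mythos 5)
Your proposal is correct and is exactly the argument the paper intends: the corollary is stated without proof as an immediate consequence of Proposition~\ref{prop:aut-embeds-in-generic} applied to $X = \PP^{n-1}_R$ over $S = \Spec R$, combined with Theorem~\ref{thm:PGL-is-AUT} and Corollary~\ref{cor:local_case}. Your verification of the hypotheses (integrality of $\PP^{n-1}_R$, separatedness, and dominance of the unique component over $S$) supplies precisely the details the paper leaves implicit.
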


\begin{remark}
To make the inclusion more explicit, let $(L;M) \in G_n(\Spec R)$ represent an element
of $\PGL_n(R)$, where $L$ is a line bundle and $M$ a matrix of elements of $L$
with $\det(M)$ generating $L^{\otimes n}$. As each line bundle is isomorphic to
an $R$-submodule of $K$ \cite[Prop.~II.6.15]{Hartshorne_Bible}, we may assume
that $L \subset{}K$. Then $M \in \GL_n(K)$.
\end{remark}

When $R$ is a Dedekind ring, we can give a numerical characterization of the
image of $\PGL_n(R)$ in $\GL_n(K) / K^\times$. (Thanks to Aaron Gray for this
result in the case $n = 2$.)

\begin{proposition}
Let $R$ be a Dedekind ring with field of fractions $K$. Let $A = (A_{i,j}) \in \GL_n(K)$
represent an element of $\PGL_n(K)$. Then $A$ lies in the image of the
homomorphism $\PGL_n(R) \hookrightarrow \PGL_n(K)$ if and only if for each
nonzero prime ideal $\pp$ of $R$, the following holds:
\[
n \mid \ord_\pp\left(\det(A)\right) \quad \text{and} \quad
\ord_\pp(A_{i,j}) \geq \frac{1}{n} \ord_\pp\left(\det(A)\right) \text{ for all $i,j$}.
\]
\end{proposition}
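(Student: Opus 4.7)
The strategy is to work directly with the identification $\PGL_n(R) \cong G_n(\Spec R)$ of Proposition~\ref{prop:simplePGLn}, using the fact (Hartshorne II.6.15) that over the Dedekind ring $R$ every line bundle $L$ on $\Spec R$ is realized as a fractional ideal $L \subset K$, and that $L$ is $n$-torsion in $\Pic(R)$ iff $L^n$ is principal as a fractional ideal. The two implications are then both translations of the defining conditions on $(L; M) \in G_n(\Spec R)$ into valuation statements, plus the observation that scaling a matrix by $\lambda \in K^\times$ shifts $\ord_\pp$ of all entries by $\ord_\pp(\lambda)$ and $\ord_\pp(\det)$ by $n\,\ord_\pp(\lambda)$, leaving both the divisibility ``$n \mid \ord_\pp(\det)$'' and the inequality $\ord_\pp(A_{i,j}) \geq \tfrac{1}{n}\ord_\pp(\det(A))$ invariant.

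For the forward direction, suppose the class of $A$ lifts to $\PGL_n(R)$; pick a representative $(L; M)$ with $L$ a fractional ideal in $K$, so $M \in \GL_n(K)$ is equivalent to $A$ modulo $K^\times$. Say $A = \lambda M$. The two defining conditions on the pair $(L; M)$ are: (i) each entry of $M$ lies in $L$, i.e.\ $\ord_\pp(M_{i,j}) \geq \ord_\pp(L)$; and (ii) $\det(M)$ generates $L^{\otimes n}$, i.e.\ $\ord_\pp(\det(M)) = n\,\ord_\pp(L)$. Combining, $n \mid \ord_\pp(\det(M))$ and $\ord_\pp(M_{i,j}) \geq \tfrac{1}{n}\ord_\pp(\det(M))$. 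Transport these through the scaling by $\lambda$ (as above) to get the stated conditions on $A$.

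For the converse, assume the two conditions on $A$. Define a fractional ideal of $R$ by
\[
L \;=\; \prod_{\pp} \pp^{\,\frac{1}{n}\ord_\pp(\det(A))},
\]
which makes sense because the exponents are integers (by the divisibility hypothesis) and are zero for almost all $\pp$ (since $\det(A) \in K^\times$). By construction, $\ord_\pp(L^n) = \ord_\pp(\det(A))$ for all $\pp$, so $L^n = (\det(A))$ is principal and hence $[L] \in \Pic(R)[n]$. The inequality hypothesis says exactly that every entry $A_{i,j}$ lies in $L$ as an $R$-submodule of $K$, and $\det(A)$ generates $L^n$. Therefore $(L; A)$ is a legitimate element of $G_n(\Spec R)$, and its image in $\PGL_n(K)$ under the inclusion described in the Remark preceding the proposition is the class of $A$.

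No step is really an obstacle here: the only substantive inputs are the classical identification of line bundles on a Dedekind scheme with fractional ideals, the characterization of a Cartier divisor generating $L^n$ via its valuations, and the clean compatibility between matrix-scaling by $\lambda \in K^\times$ and the conditions on valuations. The ``hard'' step is essentially the bookkeeping in the forward direction, which is only mildly tedious because one must check that the conditions are preserved when passing from the representative $M$ back to $A = \lambda^{-1} M$.
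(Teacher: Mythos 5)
Your proof is correct and follows essentially the same route as the paper: both directions identify $\PGL_n(R)$ with $G_n(\Spec R)$, realize the line bundle as a fractional ideal, translate ``entries lie in $L$'' and ``$\det$ generates $L^{\otimes n}$'' into valuation conditions, and in the converse construct the same fractional ideal $\prod_\pp \pp^{\frac{1}{n}\ord_\pp(\det(A))}$. Your only (harmless) variation is making the $K^\times$-scaling invariance of the two conditions explicit, which the paper handles by direct computation with $A = xM$.
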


\begin{proof}
Identify $\PGL_n(R)$ with $G_n(\Spec R)$. Suppose first that $A$ is in the image
and fix a nonzero prime ideal $\pp$ of $R$.  As every line bundle on $\Spec R$
is isomorphic to a nonzero ideal, we see there is an ideal $I \subset R$ and a
matrix $M = (M_{i,j})$ with entries in $I$ such that $A = xM$ for some $x \in
K^\times$. Then $\det(A) = x^n \det(M)$. Since $I^{\otimes n} \cong I^n$ is
generated by $\det(M)$, we see that
\begin{equation}
  \label{eq:dets}
  \ord_\pp\left(\det(A)\right) = n \big(\ord_\pp(x) + \ord_\pp(I)\big).
\end{equation}
Hence, $n \mid \ord_\pp\left(\det(A)\right)$. As each entry $M_{i,j}$ lies in
$I$, we have $\ord_\pp(M_{i,j}) \geq \ord_\pp(I)$. Thus, by \eqref{eq:dets}, we have
\[
\ord_\pp(A_{i,j}) = \ord_\pp(x) + \ord_\pp(M_{i,j}) \geq
\frac{1}{n}\ord_\pp\left(\det(A)\right). 
\]

For the converse, suppose that for every nonzero prime ideal $\pp$ of $R$ we
know that
\[
n \mid \ord_\pp\left(\det(A)\right) \quad \text{and} \quad
\ord_\pp(A_{i,j}) \geq \frac{1}{n} \ord_\pp\left(\det(A)\right) \text{ for all $i,j$}.
\]
Set $m_\pp = \frac{1}{n} \ord_\pp\left(\det(A)\right)$, and define
\[
I = \prod_{\pp \in \Spec R} \pp^{m_\pp} \subset K^\times.
\]
Then $I$ is a fractional ideal, and $A_{i,j} \in I$ because $\ord_\pp(A_{i,j}) \geq m_\pp$
by hypothesis. Moreover, since $\ord_\pp\left(\det(A)\right) = m_\pp \cdot n =
\ord_\pp(I^n)$, it follows that $\det(A)$ generates $I^n$. We conclude that
\hbox{$(I;A) \in G_n(\Spec R)$}.
\end{proof}


\subsection{Example: Non-matrix Element of PGL}
\label{subsec:NonMatrixExample}

We close this section with the promised example of an element of $\PGL_2(R)$
that is not an element of $\GL_2(R) / R^\times$. Of course, by the exact
sequence \eqref{eq:exact}, we know that it must arise from a nontrivial element
of $\Pic(R)[2]$. The 2-torsion points of elliptic curves naturally give rise to
such line bundles.

Let $k$ be a field of characteristic different from 2, and consider the
projective elliptic curve $E/k$ with affine equation $y^2=x^3-x$. Its affine
coordinate ring
\[
R = k[x,y] / (y^2 - x^3 + x)
\]
is a Dedekind domain.

Let $L = Rx + Ry$ be an ideal of $R$, and set
\[
M=\begin{pmatrix}x & y \\ y & x^2\end{pmatrix}.
\]
Since $L^{\otimes 2} \cong L^2 = Rx$ and $\det(M) = x$, we have $(L;M) \in G_2(\Spec R) =
\PGL_2(R)$. If $(L; M)$ were induced by a matrix in $\GL_2(R)$, then $L$ would
have trivial image in $\Pic(R)$ by \eqref{eq:exact}. But the
divisor of $L$ is the point $(0,0)$, which has order 2 in $\Pic(R)$.


\section{Equalizers in the Category of Schemes}
\label{sec:Equalizers}

In this section we present a number of results about equalizers in the category
of schemes.  It is likely that all of these results are available in the
literature in one form or another.  For convenience, and for want of a suitable
reference, we state and prove them in the form that we will need.

Throughout this section, $S$ denotes a fixed scheme. All schemes should be
viewed as $\ZZ$-schemes unless specified otherwise. 

In a category $\cC$, the equalizer of two morphisms $f,g \colon X \to Y$ is a
morphism $i \colon E_{\cC}(f,g) \to X$ satisfying the following two properties:
\begin{itemize}
\item $f \circ i = g \circ i$, and 
\item If $j \colon W \to X$ is any morphism such that $f \circ j = g \circ j$,
  then $j$ factors uniquely through $i$. That is, there exists a unique morphism
  $k \colon W \to E_{\cC}(f,g)$ such that $j = i \circ k$. 
\end{itemize}
If an equalizer exists, then it is unique up to canonical isomorphism over
$X$. We will sometimes abuse terminology and refer to the source object
$E_{\cC}(f,g)$ as the equalizer of $f$ and $g$.

If $\cC$ is the category of schemes over $S$, we will write $E_S(f,g)$
for the equalizer of two morphisms $f,g$. If $\cC$ is the category of all
schemes, we write $E_\ZZ(f,g)$ for the equalizer. (This notational distinction
will turn out to be irrelevant; see Remark~\ref{rem:no_base}.)

\begin{proposition}
\label{prop:exists}
Equalizers exist in the category of $S$-schemes.  Given
$S$-morphisms \hbox{$f,g\colon{}X\to{}Y$}, the equalizer of $f$ and $g$ is given
by the fiber product $E(f,g)=X\times_{Y\times_S Y}Y$ relative to the map
$f\times{}g:X\to Y\times_S Y$ and the diagonal $\Delta:Y\to Y \times_S Y$, with
$E(f,g)\to X$ projection onto the first factor.
\end{proposition}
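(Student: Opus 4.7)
The plan is to verify that the proposed fiber product $E(f,g) = X \times_{Y \times_S Y} Y$ does satisfy the two defining properties of an equalizer in the category of $S$-schemes. Fiber products in this category are known to exist (one can take the usual $\Spec$-glued construction), so the object $E(f,g)$ is well-defined; the content is purely in checking the universal property.

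First, I would fix notation for the canonical projections $p_1 \colon E(f,g) \to X$ and $p_2 \colon E(f,g) \to Y$ coming out of the fiber product, and take $i := p_1$. Let $\pi_1, \pi_2 \colon Y \times_S Y \to Y$ denote the two projections, so that $\pi_1 \circ (f \times g) = f$, $\pi_2 \circ (f \times g) = g$, and $\pi_1 \circ \Delta = \pi_2 \circ \Delta = \mathrm{id}_Y$. The defining commutative square of the fiber product says $(f \times g) \circ p_1 = \Delta \circ p_2$. Post-composing with $\pi_1$ gives $f \circ p_1 = p_2$, and post-composing with $\pi_2$ gives $g \circ p_1 = p_2$. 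Comparing, we obtain $f \circ i = g \circ i$, which is the first equalizer condition.

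Next, to verify the universal property, suppose $j \colon W \to X$ is an $S$-morphism satisfying $f \circ j = g \circ j$. Set $h := f \circ j = g \circ j \colon W \to Y$. Then the pair $(j, h)$ induces a map $W \to X \times_S Y$, but more importantly the composition $(f \times g) \circ j$ agrees with $\Delta \circ h$: indeed, post-composing with $\pi_1$ yields $f \circ j = h = \mathrm{id}_Y \circ h$, and post-composing with $\pi_2$ yields $g \circ j = h = \mathrm{id}_Y \circ h$, so the equality holds by the universal property of $Y \times_S Y$. Thus we have two $S$-morphisms into $Y \times_S Y$, namely $(f \times g) \circ j \colon W \to Y \times_S Y$ via $X$ and $\Delta \circ h$ via $Y$, which agree. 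By the universal property of the fiber product $E(f,g) = X \times_{Y \times_S Y} Y$, there is a unique $S$-morphism $k \colon W \to E(f,g)$ with $p_1 \circ k = j$ and $p_2 \circ k = h$. The first of these is precisely $i \circ k = j$, and the uniqueness of $k$ is likewise inherited from the fiber product.

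There is no real obstacle here: the argument is a completely formal diagram chase. The only mildly subtle point is the bookkeeping to see that a morphism $j$ with $f \circ j = g \circ j$ really does give rise to a compatible pair of maps into $Y \times_S Y$ equalized by $\Delta$, so that the universal property of the fiber product applies. Since fiber products exist in $\mathbf{Sch}/S$, everything goes through.
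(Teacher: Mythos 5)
Your argument follows the same route as the paper's proof: a direct verification of the universal property, with part one ($f \circ i = g \circ i$) and part two (construction of the factorization $k$) carried out by exactly the same diagram chase through the projections of $Y \times_S Y$ and the defining square of the fiber product. Those two steps are correct as written.

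There is, however, a genuine (if small) gap in your last sentence, where you say the uniqueness of $k$ is ``likewise inherited from the fiber product.'' The uniqueness furnished by the fiber product applies only to morphisms satisfying the \emph{pair} of conditions $p_1 \circ k = j$ and $p_2 \circ k = h$, whereas the equalizer's universal property demands uniqueness among morphisms satisfying the \emph{single} condition $i \circ k = j$. A priori a competitor $k' \colon W \to E(f,g)$ could satisfy $p_1 \circ k' = j$ while failing $p_2 \circ k' = h$, so uniqueness is not automatically inherited; this is precisely why the paper's proof has a separate step (c), which argues that $\Delta \circ p_2 \circ k' = (f \times g) \circ p_1 \circ k' = (f \times g) \circ j = \Delta \circ (f \circ j)$ and then applies the retraction $\pi_1$ of $\Delta$ to conclude $p_2 \circ k' = f \circ j = h$, at which point the fiber product's uniqueness does apply and gives $k' = k$. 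In your setup the fix is even faster: your first step established the identity $p_2 = f \circ p_1$, so any $k'$ with $p_1 \circ k' = j$ automatically satisfies $p_2 \circ k' = f \circ p_1 \circ k' = f \circ j = h$. One sentence to this effect closes the gap.
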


\begin{proof}
Write $\pi_X, \pi_Y$ for the projection morphisms onto the two factors of
$X \times_{(Y \times_S Y)} Y$.
To argue that
$\pi_X \colon X \times_{(Y \times_S Y)} Y \to X$
is the equalizer of $f$ and $g$ in the category of $S$-schemes, we must show
three things:
\begin{enumerate}
\item[(a)] $f \circ \pi_X = g \circ \pi_X$;
  \item[(b)] If $j \colon W \to X$ is an $S$-morphism such that $f \circ j =
g \circ j$, then there exists an $S$-morphism
\begin{equation*}
k{}\colon{}W\to{}X{}\times_{(Y\times_S{}Y)}{}Y
\end{equation*}
such that $j = \pi_X \circ k$;
and \item[(c)] The morphism $k$ in (b) is unique.
\end{enumerate}

For (a), write $p,q$ for the first and second projections of $Y \times_S Y$ to
$Y$. Combining the fiber product square for $X{}\times_{(Y\times_S{}Y)}{}Y$ with
the diagram defining the map $f \times g \colon X \to Y \times_S Y$, we obtain
the following commutative diagram:  
\begin{equation}
  \label{diag:exist}
    \begin{tikzcd}
      X \times_{(Y \times_S Y)} Y
      \ar[r,"\pi_Y"]
      \ar[d,"\pi_X"']
      & Y
      &
      \\ X
      \ar[r,"f \times g", near end]
      \ar[dr,"f",bend right=20]
      \ar[rr,"g",bend left=25]
      & Y \times_S Y
      \ar[d,"p"']
      \ar[r,"q", near start]
      \ar[from=u,crossing over,"\Delta", near start]
      & Y
      \ar[d]
      \\
      & Y
      \ar[r]
      & S      
    \end{tikzcd}
\end{equation}
Using the fact that $p \circ \Delta = q \circ \Delta = 1_Y$, we can chase around
the diagram to see that
\[
f \circ \pi_X = p \circ \Delta \circ \pi_Y = 1_Y \circ \pi_Y = q\circ \Delta
\circ \pi_Y = g \circ \pi_X.
\]
This proves (a).

For (b), suppose that $j \colon W \to X$ is such that $f \circ j = g \circ
j$. Note that $j \colon W \to X$ and $f\circ j \colon W \to Y$ can be used to
define a morphism $W \to X{}\times_{(Y\times_S{}Y)}{}Y$ provided we can show that 
\begin{equation}
  \label{eq:exists}
  (f \times g) \circ j = \Delta \circ f \circ j.
\end{equation}
As the target of these morphisms is $Y \times_S Y$, it suffices to show equality
in \eqref{eq:exists} after applying the projections $p$ and $q$. Noting that $f
= p \circ (f \times g)$ and $g = q \circ (f \times g)$, we have
\begin{align*}
p \circ \Delta \circ f \circ j &= 1_Y \circ f \circ j = [p \circ (f \times g)]
\circ j \\
q \circ \Delta \circ f \circ j &= 1_Y \circ f \circ j = g \circ j = [q \circ (f
  \times g)] \circ j.
\end{align*}
We conclude that \eqref{eq:exists} holds. By the universal property for the
fiber product $X{}\times_{(Y\times_S{}Y)}{}Y$, there is a unique morphism $k
\colon W \to X{}\times_{(Y\times_S{}Y)}{}Y$ such that $\pi_X \circ k = j$ and
$\pi_Y \circ k = f \circ j$. This completes the proof of (b).

For (c), suppose that $k' \colon W \to X{}\times_{(Y\times_S{}Y)}{}Y$ is another
$S$-morphism such that $\pi_X \circ k' = j$. Then \eqref{diag:exist} and
\eqref{eq:exists} show that
\[
\Delta \circ \pi_Y \circ k' = (f \times g) \circ \pi_X \circ k' = (f \times g)
\circ j = \Delta \circ f \circ j. 
\]
Applying $p$ to the far ends of this equality gives $\pi_Y \circ k' = f \circ
j$. That is, $k'$ satisfies both equalities $\pi_X \circ k' = j$ and $\pi_Y
\circ k' = f \circ j$. In (b), we argued that there is a unique such morphism,
so $k' = k$.
\end{proof}

The next result shows that formation of the equalizer is both agnostic to the
choice of base scheme and also compatible with base extension.

\begin{proposition}
\label{prop:compatible}
  Let $\rho \colon T \to S$ be a morphism of schemes.
  \begin{enumerate}
    \item\label{prop:TtoS} If $f, g \colon X \to Y$ are morphisms of $T$-schemes, then they are
      also morphisms of $S$-schemes, and we have a canonical
      isomorphism $E_S(f,g) \cong E_T(f,g)$ over $X$.
    \item\label{prop:StoT} If $f,g \colon X \to Y$ are morphisms of $S$-schemes, then we have a
      canonical isomorphism $E_T(f_T,g_T) \cong E_S(f,g)_T$ over $X_T$.
  \end{enumerate}  
\end{proposition}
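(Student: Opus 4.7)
\smallskip

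\textbf{Proof plan.} Both parts are universal-property arguments that reduce to standard fiber-product manipulations, leveraging the explicit description from Proposition~\ref{prop:exists}. I will treat each part in turn.

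For part~\ref{prop:TtoS}, my plan is to argue that the objects $E_S(f,g) \to X$ and $E_T(f,g) \to X$ represent the same functor, namely the one sending an $S$-scheme (equivalently, a $T$-scheme) $W$ to the set of morphisms $j \colon W \to X$ satisfying $f \circ j = g \circ j$. The key observation is that given any morphism $j \colon W \to X$ in the $S$-category, one obtains a canonical $T$-scheme structure on $W$ via the composition $W \to X \to T$, and this $T$-scheme structure is compatible with the original $S$-structure (since $X \to S$ factors through $T$). Conversely, any $T$-morphism $j$ is automatically an $S$-morphism. So I will first observe that $E_T(f,g)$ inherits a canonical $S$-scheme structure (via $E_T(f,g) \to X \to T \to S$) and that the equalizer map $E_T(f,g)\to X$ is then an $S$-morphism, yielding by the universal property of $E_S(f,g)$ a unique $X$-morphism $E_T(f,g)\to E_S(f,g)$. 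Running the symmetric argument gives $E_S(f,g)\to E_T(f,g)$, and both composites are the identity by uniqueness.

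For part~\ref{prop:StoT}, my plan is purely computational using the fiber-product formula. Starting from $E_S(f,g) = X \times_{Y\times_S Y} Y$, I will base-change to $T$ and use the associativity/commutativity of fiber products together with the canonical identification $(Y \times_S Y) \times_S T \cong Y_T \times_T Y_T$. Concretely, the chain of canonical isomorphisms
\begin{equation*}
E_S(f,g)_T
\;\cong\; \bigl(X \times_{Y\times_S Y} Y\bigr) \times_S T
\;\cong\; X_T \times_{(Y\times_S Y)_T} Y_T
\;\cong\; X_T \times_{Y_T \times_T Y_T} Y_T
\;\cong\; E_T(f_T, g_T)
\end{equation*}
gives the required isomorphism over $X_T$. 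One needs to verify that under these identifications the two structure maps to $X_T$ agree, but that is immediate from tracing the projections.

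There is no serious obstacle here. The only mild care is, in part~\ref{prop:TtoS}, keeping track of which base structure one is using on the equalizer object — the whole content is that the two structures on $E_T(f,g)$ and $E_S(f,g)$ match up, which follows from the fact that both acquire their base structure through the same composite $\cdot \to X \to T \to S$. Everything else is routine manipulation of universal properties and of iterated fiber products.
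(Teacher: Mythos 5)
Your proposal is correct, and it splits naturally into a ``same route'' half and a ``different route'' half. For part~\ref{prop:TtoS} you are doing essentially what the paper does: the paper verifies directly that $i \colon E_T(f,g) \to X$ satisfies the $S$-universal property, endowing a test object $W$ with the $T$-structure $\rho_X \circ j$ exactly as you propose; your packaging via two comparison morphisms $u \colon E_T(f,g) \to E_S(f,g)$ and $v \colon E_S(f,g) \to E_T(f,g)$, with both composites killed by uniqueness, is an equivalent rendering. The one point deserving more than your closing caveat is the uniqueness step: to conclude $v \circ u = \mathrm{id}$ from the $T$-universal property you must know $v \circ u$ is a $T$-morphism for the right structure, and this is precisely the subtlety the paper isolates in its step~(c) --- an arbitrary competing $S$-morphism $k'$ with $i \circ k' = j$ is not a priori a $T$-morphism, but becomes one for the structure $\rho_E \circ k' = \rho_X \circ i \circ k' = \rho_X \circ j$, which is the same structure used in the existence step. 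Your remark that ``both acquire their base structure through the same composite $\cdot \to X \to T \to S$'' is exactly this observation, so the gap is one of emphasis, not substance. (Your phrase ``an $S$-scheme (equivalently, a $T$-scheme) $W$'' is loose --- the equivalence holds only for $W$ equipped with its map to $X$ --- but your subsequent sentences state it correctly.) For part~\ref{prop:StoT} you genuinely diverge: the paper omits the argument, asserting that the universal-property verification is similar to part~\ref{prop:TtoS}, whereas you compute directly from the fiber-product formula of Proposition~\ref{prop:exists}, using base-change compatibility of fiber products and the identifications $(Y \times_S Y)_T \cong Y_T \times_T Y_T$, under which $(\Delta_{Y/S})_T$ corresponds to $\Delta_{Y_T/T}$ and $(f \times g)_T$ to $f_T \times g_T$. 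This computational route is arguably preferable here, since it produces the isomorphism over $X_T$ explicitly; its only obligations are the routine map-compatibility checks, which you correctly flag.
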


\begin{proof}
We begin with assertion \ref{prop:TtoS}. Let $i \colon E_T(f,g) \to X$ be the
equalizer of $f$ and $g$ in the category of $T$-schemes. We claim it is also the
equalizer in the category of $S$-schemes. To that end, we must show three
things:
\begin{enumerate}
\item[(a)] $f \circ i = g \circ i$;
  \item[(b)] If $j \colon W \to X$ is an $S$-morphism such that $f \circ j = g \circ
    j$, then there exists $k \colon W \to E_T(f,g)$ over $S$ such that
    $j = i \circ k$; and
    \item[(c)] The morphism $k$ in (b) is unique.
\end{enumerate}
That $i$ is the equalizer of $f$ and $g$ (in any category) immediately
implies~(a).

For (b), suppose that $j \colon W \to X$ is an $S$-morphism such that $f \circ j
= g \circ j$. Write $\rho_W \colon W \to S$ and $\rho_X \colon X \to T$ for the
structure morphisms of $W$ and $X$, respectively. To say $j$ is an $S$-morphism
is to say that the following diagram commutes:
\begin{equation*}
\begin{tikzcd}[column sep = large, row sep = large]
  W \ar[r,"j"] \ar[d,"\rho_W"'] \ar[dr,"\rho_X \circ j",dashed] & X \ar[d,"\rho_X"] \\
  S  & T \ar[l,"\rho"]
\end{tikzcd}
\end{equation*}
We endow $W$ with a $T$-scheme structure via the map $\rho_X \circ j$; in this
way, $j$ becomes a morphism of $T$ schemes. By the universal property for
$E_T(f,g)$, there is a $T$-morphism $k \colon W \to E_T(f,g)$ such that $j = i
\circ k$. Any $T$-morphism is also an $S$-morphism, so (b) is complete.

For (c), we cannot immediately lean on the universal property for $E_T(f,g)$ to
assert that $k$ is unique because the $T$-scheme structure on $W$ is not
unique. Suppose that $k' \colon W \to E_T(f,g)$ is a second $S$-morphism such
that $j = i \circ k'$. Let $\rho_E \colon E_T(f,g) \to T$ be the structure
morphism for $E_T(f,g)$. Then $k'$ is a $T$-morphism if we give $W$ the
structure map $\rho_E \circ k'$. As above, this gives a commutative diagram:
\begin{equation*}
\begin{tikzcd}[column sep = large, row sep = large]
  W
  \ar[r,"k'"]
  \ar[d,"\rho_W"']
  \ar[dr,"\rho_E \circ k'"]
  & E_T(f,g)
  \ar[d,"\rho_E"]
  \ar[r,"i"]
  & X
  \ar[d,"\rho_X"]
  \\ S
  & T
  \ar[l,"\rho"]
  \ar[r,equal]  
  & T
\end{tikzcd}
\end{equation*}
Since $j = i \circ k'$, composition across the top of this diagram shows that
the $T$-scheme structures $\rho_E \circ k'$ and $\rho_X \circ j$ agree on
$W$. The latter is the structure we used in part (b), so the uniqueness part of
the universal property for $E_T(f,g)$ applies to show that $k' = k$. This
completes the proof of the first part of the proposition.

For the second part of the proposition, let $i \colon E_S(f,g) \to X$ be the
equalizer of $f$ and $g$ in the category of $S$-schemes. After base extension to
$T$, we obtain the $T$-morphism $i_T \colon E_S(f,g)_T \to X_T$, which
immediately satisfies $f_T \circ i_T = g_T \circ i_T$. It suffices to show that
$i_T \colon E_S(f,g)_T \to X_T$ satisfies the definition of the equalizer in the
category of $T$-schemes. The proof strategy is similar to the above argument, so
we omit it. 
\end{proof}

\begin{remark}
  \label{rem:no_base}
As a consequence of the first part of Proposition~\ref{prop:compatible}, we no
longer need to make it explicit which category of schemes we use to compute the
equalizer. If $f,g \colon X \to Y$ are morphisms of $S$-schemes, we will now
write $E(f,g) \to X$ for the equalizer.
\end{remark}

The formation of the equalizer of two morphisms $f,g \colon X \to Y$ behaves
well with respect to precomposition.

\begin{proposition}
  \label{prop:precomp}
Let $i \colon E(f,g) \to X$ be the equalizer of two morphisms $f,g \colon X \to
Y$. If $h \colon Z \to X$ is any morphism, then the equalizer of $f \circ h$ and
$g \circ h$ is given by $\pi_Z \colon E(f,g) \times_X Z \to Z$.
\end{proposition}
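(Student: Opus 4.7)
The plan is to give the proof via the fiber-product description of the equalizer established in Proposition~\ref{prop:exists}, since the result then reduces to a transitivity-of-fiber-products manipulation. By that proposition, $E(f,g) = X \times_{Y \times_S Y} Y$ with the structure maps $f \times g \colon X \to Y \times_S Y$ and $\Delta \colon Y \to Y \times_S Y$. Similarly, $E(f \circ h, g \circ h) = Z \times_{Y \times_S Y} Y$, using the structure map $(f \circ h) \times (g \circ h) = (f \times g) \circ h$ on the first factor.

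The key observation is then that $(f \times g) \circ h$ factors the morphism $Z \to Y \times_S Y$ through $X$, so by associativity/transitivity of fiber products we have
\[
Z \times_{Y \times_S Y} Y \;\cong\; Z \times_X \bigl(X \times_{Y \times_S Y} Y\bigr) \;=\; Z \times_X E(f,g),
\]
and under this isomorphism the canonical map to $Z$ corresponds precisely to the projection $\pi_Z \colon E(f,g) \times_X Z \to Z$. I would spell this out by identifying both sides with the functor on schemes sending $W$ to pairs $(W \to Z,\, W \to Y)$ whose images in $Y \times_S Y$ agree.

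As an alternative (or sanity check), one can verify the universal property of the equalizer directly. A morphism $W \to E(f,g) \times_X Z$ is, by the universal property of the fiber product, the same as a pair of morphisms $a \colon W \to E(f,g)$ and $b \colon W \to Z$ satisfying $i \circ a = h \circ b$. The defining property of $i$ then makes this the same as a morphism $b \colon W \to Z$ such that $f \circ h \circ b = g \circ h \circ b$, which is exactly the universal property for the equalizer of $f \circ h$ and $g \circ h$ over $Z$. Either approach is essentially bookkeeping once the fiber-product description is in hand, so I do not expect a genuine obstacle; the only care needed is to check that the two possible base schemes (the base of $f,g$ versus any intermediate base for $h$) do not cause ambiguity, which is guaranteed by Proposition~\ref{prop:compatible} and Remark~\ref{rem:no_base}.
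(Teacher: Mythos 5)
Your proposal is correct, but your primary argument is a genuinely different route from the paper's. The paper's own proof is only a sketch: it asserts that $\pi_Z \colon E(f,g) \times_X Z \to Z$ satisfies the universal property of the equalizer of $f \circ h$ and $g \circ h$, to be verified ``virtually the same'' way as in Proposition~\ref{prop:compatible}\eqref{prop:TtoS}, and omits the details --- this is exactly your second, ``sanity check'' argument, which you actually carry out (and carry out correctly: the bijection between pairs $(a,b)$ with $i \circ a = h \circ b$ and morphisms $b \colon W \to Z$ equalizing $f \circ h$ and $g \circ h$ uses both existence and uniqueness in the universal property of $i$, and you should say the word ``uniquely'' when invoking it, but the content is right). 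Your main argument is instead structural: identify $E(f,g)$ with $X \times_{Y \times_S Y} Y$ via Proposition~\ref{prop:exists}, note that $(f \circ h) \times (g \circ h) = (f \times g) \circ h$, and apply transitivity of fiber products to get $Z \times_{Y \times_S Y} Y \cong Z \times_X \big(X \times_{Y \times_S Y} Y\big)$ compatibly with the projections to $Z$. This buys a shorter proof that produces the canonical isomorphism over $Z$ in one stroke, at the cost of invoking transitivity of fiber products (standard, but nowhere stated in the paper) and of committing to the particular construction of the equalizer rather than its universal property; the paper's intended diagram chase is construction-independent. You are also right to flag the base-scheme issue --- $Z$ acquires its $S$-structure through $h$, and Proposition~\ref{prop:compatible} with Remark~\ref{rem:no_base} makes the statement unambiguous --- which is the one point where a careless version of the fiber-product argument could go wrong. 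No gaps; either of your arguments would serve as a complete replacement for the paper's omitted proof.
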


\begin{proof}
  One proves that the map $\pi_Z \colon E(f,g) \times_X Z \to Z$ satisfies the
  universal property for the equalizer of $f \circ h$ and $g \circ h$. The
  strategy is virtually the same as in the proof of
  Proposition~\ref{prop:compatible}\eqref{prop:TtoS}, so we omit it.
\end{proof}

Let $X$ be a scheme over $S$. The universal property for the fiber product shows
that for any scheme $Y$, we have a canonical bijection
\[
\Hom_{\ZZ}(X,Y) \cong \Hom_S(X,Y_S).
\]
It turns out that this bijection preserves equalizers.

\begin{proposition}
\label{prop:which-category}
Let $X$ be a scheme over $S$, and $Y$ an arbitrary scheme. Let $f,g \colon X \to
Y$ be morphisms, and let $f',g' \colon X \to Y_S$ be the induced morphisms of
$S$-schemes. Then $E(f,g)$ is canonically isomorphic to $E(f',g')$ as schemes
over $X$.
\end{proposition}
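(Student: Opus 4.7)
The plan is to show directly that $i \colon E(f,g) \to X$ (endowed with its natural $S$-scheme structure via the composition $\rho_X \circ i$, where $\rho_X \colon X \to S$ is the structure morphism) also satisfies the universal property that characterizes the equalizer of $f',g'$ in the category of $S$-schemes. The canonical isomorphism $E(f,g) \cong E(f',g')$ over $X$ then drops out of the uniqueness of equalizers.

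First, I would unpack $f'$ and $g'$: by the universal property of the fiber product $Y_S = Y \times_{\ZZ} S$, the $S$-morphism $f'$ is the unique morphism whose composition with the projection $Y_S \to Y$ is $f$ and whose composition with $Y_S \to S$ is $\rho_X$; likewise for $g'$. To verify $f' \circ i = g' \circ i$, I would compose with each projection: composing with $Y_S \to Y$ yields $f \circ i = g \circ i$, which holds since $i$ is the equalizer of $f$ and $g$; composing with $Y_S \to S$ yields $\rho_X \circ i$ on both sides. By the universal property of $Y_S$, the two $S$-morphisms agree.

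Next I would verify the universal property. Suppose $j \colon W \to X$ is an $S$-morphism satisfying $f' \circ j = g' \circ j$. Composing both sides with the projection $Y_S \to Y$ gives $f \circ j = g \circ j$. By the universal property of $E(f,g)$ (in the category of all schemes, which by Remark~\ref{rem:no_base} is interchangeable with the $S$-scheme category here), there exists a unique morphism $k \colon W \to E(f,g)$ with $j = i \circ k$. This $k$ is automatically an $S$-morphism: its composition with the structure map $\rho_X \circ i$ of $E(f,g)$ equals $\rho_X \circ j = \rho_W$, the structure map of $W$. Uniqueness of $k$ as an $S$-morphism follows \emph{a fortiori} from uniqueness as an abstract scheme morphism.

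The main obstacle is really just bookkeeping: keeping straight the two different ambient categories and using the universal property of the fiber product $Y \times_{\ZZ} S$ to reduce equalities of $S$-morphisms to $Y_S$ to equalities of their components. Once that is sorted, the argument is parallel in spirit to (and actually simpler than) the proof of Proposition~\ref{prop:compatible}\eqref{prop:TtoS}, so I would be tempted to say ``the strategy is virtually the same as in the proof of Proposition~\ref{prop:compatible}, so we omit it,'' though in this case the extra step of checking equality in $Y_S$ via projections deserves at least a sentence.
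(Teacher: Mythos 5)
Your proposal is correct and follows essentially the same route as the paper's own proof: you endow $E(f,g)$ with the $S$-structure $\rho_X \circ i$, verify $f' \circ i = g' \circ i$ by composing with the two projections of $Y_S$, and obtain the factorization $k$ from the universal property of $E(f,g)$ before checking $\rho_X \circ i \circ k = \rho_X \circ j = \rho_W$, exactly as in the paper. Your explicit \emph{a fortiori} remark on the uniqueness of $k$ is a point the paper leaves implicit, and it is indeed simpler here than in Proposition~\ref{prop:compatible}\eqref{prop:TtoS}, since uniqueness already holds at the level of abstract scheme morphisms.
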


\begin{proof}
Let $\rho_X \colon X \to S$ be the structure morphism for $X$. Write $i \colon
E(f,g) \to X$ for the equalizer of $f$ and $g$. Note that $i$ becomes an
$S$-morphism if we endow $E(f,g)$ with the $S$-scheme structure given by $\rho_X
\circ i$. We will show that $i$ also satisfies the universal property to be the
equalizer of $f'$ and $g'$.

First we show that
\begin{equation}
  \label{eq:primes}
  f' \circ i = g' \circ i.
\end{equation}
Since the target is the product $Y_S = Y \times_{\Spec \ZZ} S$, it suffices to show that
\eqref{eq:primes} holds after applying the two projections $\pi_Y$ and
$\pi_S$. Using the fact that $\pi_Y \circ f' = f$ and $\pi_Y \circ g' = g$, we have
\begin{align*}
  \pi_Y \circ f' \circ i &= f \circ i = g \circ i = \pi_Y \circ g' \circ i \\
  \pi_S \circ f' \circ i &= \rho_X \circ i= \pi_S \circ g' \circ i.
\end{align*}
Hence \eqref{eq:primes} holds.

Next, let $j \colon W \to X$ be an $S$-morphism such that $f' \circ j = g' \circ
j$. Applying $\pi_Y$ to both sides of this equality shows that $f \circ j = g
\circ j$, from which we conclude that there is a unique morphism $k \colon W \to
E(f,g)$ such that $i \circ k = j$. To conclude the proof, we must show that $k$
is an $S$-morphism with the given structure on $E(f,g)$. Write $\rho_W \colon W
\to S$ for the structure morphism for $W$. Then
\[
\rho_W = \rho_X \circ j = \rho_X \circ i \circ k,
\]
where the first equality follows from the fact that $j$ is an
$S$-morphism. Since $\rho_X \circ i$ is the structure map for $E(f,g)$, we
conclude that $k$ is an $S$-morphism, and we are finished.
\end{proof}

The next few results give various characterizations of the points of a fiber
product, and specifically of the equalizer of two morphisms.

\begin{lemma}
\label{lem:points-of-fiber-product}
Let $X,Y$ be $S$-schemes with structure morphisms
$\rho_X:X\to S$ and $\rho_Y:Y\to S$.  Then there is a bijection between the
points of $X\times_S Y$ and triples $(x,y,p)$ where $x\in{}X$ and $y\in{}Y$
satisfy $\rho_X(x)=\rho_Y(y)=:s$, and $p$ is a prime ideal of
$\funfld(x)\otimes_{\funfld(s)}\funfld(y)$.
\end{lemma}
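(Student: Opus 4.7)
The plan is to use the universal property of the fiber product to set up a bijection between points of $X\times_S Y$ and triples $(x,y,p)$ as described.

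Given a point $z\in X\times_S Y$, let $\pi_X,\pi_Y$ denote the two projections, and set $x=\pi_X(z)$ and $y=\pi_Y(z)$. Because $\rho_X\circ\pi_X=\rho_Y\circ\pi_Y$ (both equal the structure morphism of $X\times_S Y$), we obtain $\rho_X(x)=\rho_Y(y)=:s$. To extract the prime $p$, consider the canonical morphism $\Spec\kappa(z)\to X\times_S Y$. Postcomposing with $\pi_X$ lands in $x$, hence factors through $\Spec\kappa(x)\to X$, giving a field extension $\kappa(x)\hookrightarrow\kappa(z)$; likewise we obtain $\kappa(y)\hookrightarrow\kappa(z)$. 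These two field maps restrict to a common map from $\kappa(s)$ (namely the structure map on $\kappa(z)$ coming from $z\mapsto s$), so by the universal property of the tensor product they induce a ring homomorphism
\[
\varphi\colon \kappa(x)\otimes_{\kappa(s)}\kappa(y)\longrightarrow\kappa(z).
\]
Since $\kappa(z)$ is a field, $p:=\ker\varphi$ is a prime ideal, completing the triple.

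Conversely, given a triple $(x,y,p)$ with common image $s$, the canonical morphisms $\Spec\kappa(x)\to X$ and $\Spec\kappa(y)\to Y$ both lie over $\Spec\kappa(s)\to S$, so by the universal property of $X\times_S Y$ they combine into a morphism $\Spec(\kappa(x)\otimes_{\kappa(s)}\kappa(y))\to X\times_S Y$. Define $z$ to be the image of the point $p$ under this morphism. I would then verify that these constructions are mutually inverse: the forward-then-backward direction recovers $(x,y)$ by applying $\pi_X,\pi_Y$ and recovers $p$ as the kernel of the induced residue-field map, while the backward-then-forward direction reconstructs $z$ by chasing the universal property.

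The main obstacle will be the bookkeeping needed to see that all the residue-field inclusions commute compatibly with the projections in the non-affine setting. The cleanest workaround is to reduce to the affine case by choosing open affine neighborhoods $\Spec A\subset X$, $\Spec B\subset Y$, $\Spec R\subset S$ containing $x,y,s$, whereupon $X\times_S Y$ is locally $\Spec(A\otimes_R B)$, and the claim reduces to the standard commutative-algebra fact that primes of $A\otimes_R B$ lying over a given pair $(\mathfrak{p}_x,\mathfrak{p}_y)\in\Spec A\times\Spec B$ correspond bijectively to primes of $\kappa(x)\otimes_{\kappa(s)}\kappa(y)$ via the localization-and-quotient map $A\otimes_R B\to \kappa(x)\otimes_{\kappa(s)}\kappa(y)$.
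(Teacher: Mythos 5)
Your proposal is correct and takes essentially the same route as the paper: the forward direction assigns to $z$ the kernel of the induced map $\kappa(x)\otimes_{\kappa(s)}\kappa(y)\to\kappa(z)$, and the reverse direction uses the universal property of the fiber product to produce a morphism out of (the paper uses $\Spec\kappa(p)$ rather than all of $\Spec\bigl(\kappa(x)\otimes_{\kappa(s)}\kappa(y)\bigr)$, a cosmetic difference) and takes the image point. The paper likewise leaves the mutual-inverse check as ``straightforward,'' so your sketch, including the optional affine-local reduction, is at least as complete as the original.
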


\begin{proof}
For $z\in X\times_S Y$, set $x=\pi_X(z)$, $y=\pi_Y(z)$, and $s=\rho(z)$ where
$\rho=\rho_X\pi_X=\rho_Y\pi_Y$.  Then we associate to $z$ the triple
$(x,y,\ker\big(\funfld(x)\otimes_{\funfld(s)}\funfld(y)\to \funfld(z)\big)$.
Conversely, given $(x,y,p)$ satisfying the stated conditions, we get maps
\begin{equation*}
  \begin{tikzcd}[row sep = 0.05cm]
& \funfld(x) \ar[r] & \funfld(x)\otimes_{\funfld(s)}\funfld(y) \ar[r] &
    \funfld(p) \\
\kappa(s) \ar[ur] \ar[dr] & & & \\    
& \funfld(y) \ar[r] & \funfld(x)\otimes_{\funfld(s)}\funfld(y) \ar[r] & \funfld(p)
  \end{tikzcd}
\end{equation*}
which agree on the image of $\funfld(s)$.  These give rise to morphisms:
\begin{equation*}
\begin{aligned}
\xi:  &\spec\funfld(p) \to \spec\funfld(x) \to X \\
\eta: &\spec\funfld(p) \to \spec\funfld(y) \to Y.
\end{aligned}
\end{equation*}
The composition of either of these with the morphism to $S$ factors through
$\spec\funfld(s)$, and hence the two compositions agree.  Thus we get a morphism
$\spec\funfld(p)\to{}X\times_S{}Y$.  Its image is a single point
$z\in{}X\times_S{}Y$, which we associate to $(x,y,p)$.  It is straightforward to
show that these these maps are inverse to each other.
\end{proof}

\begin{remark}
This result is similar to the more familiar statement that for any morphism of
schemes \hbox{$f:X\to S$}, the set of points $f^{-1}(s)\subset X$ mapping to a given
point $s\in S$ is in bijection with the points of $\spec\funfld(s)\times_S{}X$.
In fact, this latter result can be used to give another proof of
Lemma~\ref{lem:points-of-fiber-product}. See also \hbox{\cite[Ex.~3.1.7]{Qing_Liu_Algebraic_Geometry}}.
\end{remark}

Since equalizers are fiber products with diagonal morphisms, we will want to use
a special property of diagonals.

\begin{definition}
A morphism of schemes $f:X\to{}Y$ is a \textbf{locally closed immersion} if for
each point $x\in{}X$, there exist neighborhoods $U$ and $V$ of $x$ and $f(x)$, 
respectively, such that $f(U)\subset V$ and $f|_U:U\to V$ is a closed immersion.
\end{definition}

\begin{lemma}
\label{lem:diagonal-is-lcimm}
If $S$ is a scheme and $Y$ is an $S$-scheme, then the diagonal
$\Delta:Y\to{}Y\times_S{}Y$ is a locally closed immersion.
\end{lemma}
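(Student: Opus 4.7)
The plan is to verify the locally-closed-immersion condition at each point $y \in Y$ by exhibiting explicit affine neighborhoods in which the diagonal is given on rings by a surjective map. Let $\rho_Y \colon Y \to S$ be the structure morphism, and set $s = \rho_Y(y)$. First, I would choose an affine open $U = \Spec A$ of $S$ containing $s$, and then an affine open $V = \Spec B$ of $\rho_Y^{-1}(U) \subset Y$ containing $y$, so that the structure morphism $V \to U$ makes $B$ into an $A$-algebra.

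The key observation will be that the open immersion $U \hookrightarrow S$ yields a canonical isomorphism $V \times_S V \cong V \times_U V$: since the structure map $V \to S$ factors through $U$, any scheme $W$ equipped with two $S$-morphisms to $V$ has both composites $W \to V \to S$ automatically factoring through $U$, so the universal properties of the two fiber products coincide. Under this identification, $V \times_S V = \Spec(B \otimes_A B)$ becomes an affine open subscheme of $Y \times_S Y$ that contains $\Delta(y)$, and $\Delta$ restricted to $V$ factors through it.

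Finally, the restricted morphism $\Delta|_V \colon V \to V \times_S V$ will correspond on rings to the multiplication map $B \otimes_A B \to B$, $b_1 \otimes b_2 \mapsto b_1 b_2$. This map is visibly surjective (since $b = 1 \otimes b \mapsto b$), so $\Delta|_V$ is a closed immersion into the open neighborhood $V \times_S V$ of $\Delta(y)$ in $Y \times_S Y$, which is exactly the data needed to witness that $\Delta$ is a locally closed immersion at $y$. The only even mildly subtle point is the identification $V \times_S V \cong V \times_U V$, which is a routine consequence of $U \hookrightarrow S$ being an open immersion; once it is in hand, the rest is a direct verification.
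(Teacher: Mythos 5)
Your proof is correct and follows essentially the same route as the paper: restrict to affine opens $U = \Spec A \subset S$ and $V = \Spec B \subset \rho_Y^{-1}(U)$, identify the affine open $V \times_U V = \Spec(B \otimes_A B)$ of $Y \times_S Y$ containing $\Delta(y)$, and observe that $\Delta|_V$ corresponds to the surjective multiplication map $B \otimes_A B \to B$, hence is a closed immersion. The only cosmetic difference is that the paper invokes its Lemma~\ref{lem:points-of-fiber-product} to see that the image of $\Delta$ is covered by such affine opens, whereas you verify the identification $V \times_S V \cong V \times_U V$ directly from the universal property --- a self-contained substitute for the same step.
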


\begin{proof}
By the previous lemma, for $y \in Y$, the point $\Delta(y)$ corresponds to the
triple $(y,y,p)$ for some prime ideal $p$ of $\funfld(y) \otimes_{\funfld(s)}
\funfld(y)$, where $s = \rho_Y(y)$. In particular, the image of $\Delta$ is
covered by open affine sets $V\times_U{}V=\spec{}B\otimes_A{}B$ where $U=\spec
A$ and $V=\spec B$ are affine open subsets of $S$ and $Y$, respectively.  Then
$\Delta|_V$ is given by the ring homomorphism $B\otimes_A{}B\to{}B:
b\otimes{}b'\mapsto{}bb'$.  This homomorphism is clearly surjective, hence
$\Delta|_V$ is a closed immersion.
\end{proof}

Locally closed immersions have the following useful properties.

\begin{lemma}
\label{lem:residue-fields-preserved}
Let $f: X\to Y$ be a locally closed immersion.  Then $f$ preserves residue
fields: For each point $x\in X$, $\funfld(x)=\funfld(f(x))$.
\end{lemma}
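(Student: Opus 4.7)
The strategy is straightforward: residue fields are a purely local invariant (they depend only on the stalk of the structure sheaf at a point), so the statement can be checked in an arbitrarily small neighborhood of any $x \in X$. Since $f$ is a locally closed immersion, we can find neighborhoods $U$ of $x$ and $V$ of $f(x)$ such that $f|_U \colon U \to V$ is a closed immersion. Shrinking further if necessary, we may assume $U$ and $V$ are affine. Thus the problem reduces to showing that closed immersions of affine schemes preserve residue fields.

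Concretely, I would write $V = \Spec A$ and $U = \Spec B$, with the closed immersion $f|_U$ corresponding to a surjective ring homomorphism $\varphi \colon A \twoheadrightarrow B$ with kernel $I$, so that $B \cong A/I$. The point $x$ corresponds to a prime ideal $\qq \subset B$, and $f(x)$ corresponds to $\pp = \varphi^{-1}(\qq) \subset A$, which contains $I$. The standard correspondence between prime ideals of $A$ containing $I$ and prime ideals of $A/I$ gives a surjection $A_\pp \twoheadrightarrow B_\qq$ whose kernel is $I A_\pp$. Quotienting by the respective maximal ideals yields
\[
\kappa(f(x)) = A_\pp / \pp A_\pp \;\twoheadrightarrow\; B_\qq / \qq B_\qq = \kappa(x),
\]
and since $\pp A_\pp$ already contains $IA_\pp$ (as $I \subset \pp$), this surjection of fields is an isomorphism. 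This gives $\kappa(x) = \kappa(f(x))$.

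I do not expect any serious obstacle; the main thing to be careful about is just the reduction step, namely noting that the residue field $\kappa(x)$ only depends on $\OO_{X,x}$, so we may freely replace $X$ and $Y$ by the open neighborhoods $U$ and $V$ supplied by the definition of a locally closed immersion, and then further replace them by affine opens around $x$ and $f(x)$ contained in $U$ and $V$. Everything else is the elementary fact that a quotient map of commutative rings induces an isomorphism on residue fields at corresponding primes.
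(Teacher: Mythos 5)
Your proof is correct and follows essentially the same route as the paper: both reduce, via the definition of a locally closed immersion, to a closed immersion of affine schemes given by a surjection $A \twoheadrightarrow A/I$, and then observe that the residue fields at corresponding primes $\pp \supseteq I$ and $\qq = \pp/I$ coincide. The only cosmetic difference is that you compute the residue fields as $A_\pp/\pp A_\pp$ via localization, while the paper identifies them as the fraction fields of the equal quotient domains $A/\pp = (A/I)/\qq$.
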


\begin{proof}
Let $y=f(x)$. The statement is local, so we may assume $X = \Spec A$, $Y = \Spec
B$, and the corresponding homomorphism $B \to A$ is surjective with kernel
$I$. Now $y$ is a prime ideal of $B$ containing $I$, and $x = y / I$, so that
$A/x = B/y$.  But $\funfld(x)$ is the fraction field of $A/x$ and $\funfld(y)$
is the fraction field of $B/y$, which therefore coincide.
\end{proof}

\begin{convention}
For a scheme $X$, let us write $|X|$ for the underlying topological space. If $f
\colon X \to Y$ is morphism of schemes, we write $|f| \colon |X| \to |Y|$ for
the underlying continuous map.
\end{convention}

\begin{lemma}
\label{lem:fiber-product-with-lcimm}
Let $f:X\to Z$ and $g:Y\to{}Z$ be morphisms of schemes, and assume $g$ is a
locally closed immersion.  Then the canonical continuous map
$|X\times_Z{}Y|\to|X|\times_{|Z|}|Y|$ given by $q \mapsto (\pi_X(q),\pi_Y(q))$
is a bijection.
\end{lemma}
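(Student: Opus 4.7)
The plan is to combine the explicit description of points of a fibre product from Lemma~\ref{lem:points-of-fiber-product} with the residue-field-preservation property of locally closed immersions from Lemma~\ref{lem:residue-fields-preserved}. Surjectivity of the canonical map is essentially formal, so the real content is injectivity, which will come down to showing that the relevant tensor product of residue fields has a \emph{unique} prime ideal.

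First I would set up notation: let $q \in X \times_Z Y$ and let $(x,y,\mathfrak{p})$ be the corresponding triple under Lemma~\ref{lem:points-of-fiber-product}, where $z := f(x) = g(y)$ and $\mathfrak{p}$ is a prime ideal of $\kappa(x) \otimes_{\kappa(z)} \kappa(y)$. The map in question sends $q \mapsto (x,y)$. For surjectivity, given any pair $(x,y) \in |X| \times_{|Z|} |Y|$, I would need to exhibit at least one prime $\mathfrak{p}$ of $\kappa(x) \otimes_{\kappa(z)} \kappa(y)$; this is automatic provided the tensor product is nonzero.

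For both injectivity and surjectivity I would now invoke Lemma~\ref{lem:residue-fields-preserved}: since $g$ is a locally closed immersion, the canonical map $\kappa(z) = \kappa(g(y)) \to \kappa(y)$ is an isomorphism. Hence
\[
\kappa(x) \otimes_{\kappa(z)} \kappa(y) \;\cong\; \kappa(x) \otimes_{\kappa(z)} \kappa(z) \;\cong\; \kappa(x),
\]
which is a field, and in particular has exactly one prime ideal, namely the zero ideal. Thus for every pair $(x,y)$ with $f(x) = g(y)$ there is exactly one admissible $\mathfrak{p}$, which simultaneously proves surjectivity (such a $\mathfrak{p}$ exists) and injectivity (it is unique).

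There is no serious obstacle here — the only subtle point is recognizing that without the locally-closed-immersion hypothesis on $g$, the tensor product $\kappa(x) \otimes_{\kappa(z)} \kappa(y)$ need not be a field (it may have several primes, corresponding to the several points of $|X \times_Z Y|$ lying over a single pair $(x,y)$). The lemma on residue fields is exactly what collapses this fibre to a single point, so once it is cited the bijection falls out immediately.
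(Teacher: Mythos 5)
Your proof is correct and follows essentially the same route as the paper: both arguments combine Lemma~\ref{lem:points-of-fiber-product} with Lemma~\ref{lem:residue-fields-preserved} to conclude that $\funfld(x)\otimes_{\funfld(z)}\funfld(y)\cong\funfld(x)$ is a field, so the prime $\mathfrak{p}$ in the triple $(x,y,\mathfrak{p})$ is forced to be zero. Your explicit remark that nonvanishing of the tensor product yields surjectivity is a welcome point the paper leaves implicit, but it is not a substantive difference.
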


\begin{proof}
By Lemma~\ref{lem:points-of-fiber-product}, the points of $X\times_Z{}Y$ are
given by triples $(x,y,p)$ with $f(x)=g(y)$ and
$p$ a prime ideal in $\funfld(x)\otimes_{\funfld(z)}\funfld(y)$, where $z=g(y)$.
Since $g$ is a locally closed immersion, by
Lemma~\ref{lem:residue-fields-preserved}, we have $\funfld(y)=\funfld(z)$, so
that $\funfld(x)\otimes_{\funfld(z)}\funfld(y)=\funfld(x)$, and we must have
$p=0$. It follows that the correspondence $p \mapsto (\pi_X(p), \pi_Y(p))$ is a
bijection. The universal property for the topological fiber products shows
that this map is continuous.
\end{proof}

\begin{remark}
Though we will not need this, the map in
Lemma~\ref{lem:fiber-product-with-lcimm} is actually a homeomorphism.  In fact,
$|X\times_Z{}Y|$ has a basis of open sets of the form $U\times_W{}V$ where
$U=\spec{A}$, $V=\spec{B}$ and $W=\spec{C}$ are affine open subsets of $X,Y,Z$,
respectively, and where $A$ and $B$ are $C$-algebras.  Moreover, the fact that
$Y\to{}Z$ is a locally closed immersion means that we can take $B=C/I$ for some
ideal $I$ of $C$.  Then $U\times_W{}V = \spec{}A/AI$.  Using this, one can show
that the image of $|U\times_W{}V|$ in $|X|\times_{|Z|}|Y|$ is equal to
$|U|\times_{|W|}|V|$, which is open in $|X|\times_{|Z|}|Y|$.  Thus the map is
open as well as being a continuous bijection, and hence it is a homeomorphism.
\end{remark}

Now we can describe the points of the equalizer.

\begin{theorem}
  \label{thm:equalizer-as-set}
  Let $i \colon E(f,g) \to X$ be the equalizer of two morphisms $f, g \colon X
  \to Y$. Then the set map $|i|$ is injective with image equal to
  \[
  \setof{x\in{}X\st f(x)=g(x)=:y \text{ and the maps $\funfld(y) \to \funfld(x)$
      induced by $f$ and $g$ agree} }.
  \]
\end{theorem}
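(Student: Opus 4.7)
The plan is to invoke the representation of the equalizer as a fibre product from Proposition~\ref{prop:exists}, and then apply the residue-field analysis of Lemma~\ref{lem:points-of-fiber-product} together with the fact (Lemma~\ref{lem:diagonal-is-lcimm}) that $\Delta \colon Y \to Y\times_S Y$ is a locally closed immersion. By Remark~\ref{rem:no_base} we may take $S = \Spec \ZZ$ without loss of generality, so that $E(f,g) = X \times_{Y \times Y} Y$, formed relative to $f \times g$ and $\Delta$, and $i = \pi_X$.

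First I would describe the underlying set of $E(f,g)$. By Lemma~\ref{lem:points-of-fiber-product}, a point of $E(f,g)$ corresponds to a triple $(x, y, \mathfrak{p})$ with $(f\times g)(x) = \Delta(y)$ and $\mathfrak{p}$ a prime of $\funfld(x) \otimes_{\funfld(\Delta(y))} \funfld(y)$. By Lemma~\ref{lem:diagonal-is-lcimm}, $\Delta$ is a locally closed immersion, so by Lemma~\ref{lem:residue-fields-preserved}, $\funfld(\Delta(y)) = \funfld(y)$; hence the tensor product collapses to $\funfld(x)$ and $\mathfrak{p}$ must be zero. Thus points of $E(f,g)$ are in bijection with pairs $(x,y) \in |X| \times |Y|$ such that $(f\times g)(x) = \Delta(y)$. (This is the content of Lemma~\ref{lem:fiber-product-with-lcimm} applied with $Y \to Z$ replaced by $\Delta$.) Since $|i|$ sends $(x,y)$ to $x$ and $y$ is recovered from $x$ as either $f(x)$ or $g(x)$, the map $|i|$ is injective.

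Next I would identify the image of $|i|$ by unpacking the condition $(f\times g)(x) = \Delta(y)$ using Lemma~\ref{lem:points-of-fiber-product} applied to the target $Y \times Y$. A point $z \in Y\times Y$ is encoded by a triple $(y_1, y_2, \mathfrak{q})$ with $y_1, y_2$ having the same image in $\Spec \ZZ$ and $\mathfrak{q}$ a prime of $\funfld(y_1) \otimes_{\funfld(s)} \funfld(y_2)$. For $(f\times g)(x)$ the underlying pair is $(f(x), g(x))$, and the prime $\mathfrak{q}$ is the kernel of
\[
\mu_{f,g} \colon \funfld(f(x)) \otimes_{\funfld(s)} \funfld(g(x)) \to \funfld(x),
\qquad a \otimes b \mapsto f^\sharp(a)\, g^\sharp(b),
\]
where $f^\sharp, g^\sharp$ denote the maps on residue fields. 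For $\Delta(y)$ the underlying pair is $(y,y)$ and $\mathfrak{q}$ is the kernel of the multiplication map $m \colon \funfld(y) \otimes_{\funfld(s)} \funfld(y) \to \funfld(y)$. So $(f\times g)(x) = \Delta(y)$ forces $f(x) = g(x) = y$ (by the first two coordinates) together with $\ker \mu_{f,g} = \ker m$.

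The last step, which is the only subtle point, is to show that $\ker \mu_{f,g} = \ker m$ is equivalent to $f^\sharp = g^\sharp$ as maps $\funfld(y) \to \funfld(x)$. For ``if'': when $f^\sharp = g^\sharp =: \varphi$, we have $\mu_{f,g}(a\otimes b) = \varphi(ab) = \varphi \circ m(a\otimes b)$, and since $\varphi$ is injective (a field map), $\ker \mu_{f,g} = \ker m$. For ``only if'': $\ker m$ contains all elements $a\otimes 1 - 1\otimes a$, so if $\ker \mu_{f,g} = \ker m$ then $f^\sharp(a) - g^\sharp(a) = \mu_{f,g}(a\otimes 1 - 1\otimes a) = 0$ for every $a \in \funfld(y)$. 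This yields exactly the stated description of the image of $|i|$, completing the proof.
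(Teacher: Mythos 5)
Your proof is correct, and the injectivity half follows the paper's argument closely (both reduce via Proposition~\ref{prop:exists} and Lemma~\ref{lem:fiber-product-with-lcimm} to injectivity of the projection from pairs $(x,y)$ with $(f\times g)(x)=\Delta(y)$). But your computation of the image takes a genuinely different route. The paper proceeds functorially: given $x$, it pulls everything back along $h\colon \Spec\funfld(x)\to X$, uses Proposition~\ref{prop:precomp} to identify the fiber $i^{-1}(x)$ with $E(f\circ h, g\circ h)$, and then settles membership by a local-ring argument --- two homomorphisms $A\to\funfld(x)$ with the same kernel $\mm$ and the same induced map on $A/\mm$ must coincide, so the equalizer over $\funfld(x)$ is nonempty exactly under the stated condition. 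You instead stay entirely inside the point-set description of Lemma~\ref{lem:points-of-fiber-product}: you encode both $(f\times g)(x)$ and $\Delta(y)$ as triples over $Y\times Y$, reduce equality of points to the kernel condition $\ker\mu_{f,g}=\ker m$, and then prove that condition equivalent to $f^\sharp = g^\sharp$ by evaluating on the elements $a\otimes 1 - 1\otimes a \in \ker m$ (for the forward direction) and factoring $\mu_{f,g}=\varphi\circ m$ through the injective field map $\varphi$ (for the converse). Your approach is more self-contained and elementary --- it never invokes Proposition~\ref{prop:precomp} or the universal property, and both directions of the image characterization fall out of one algebraic equivalence --- while the paper's approach showcases the base-change machinery it developed and generalizes more readily. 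One small point to make explicit: when you assert that the prime attached to $(f\times g)(x)$ is $\ker\mu_{f,g}$, Lemma~\ref{lem:points-of-fiber-product} literally produces the kernel of the map to $\funfld(z)$ for $z=(f\times g)(x)$; these kernels agree because the residue-field map $\funfld(z)\to\funfld(x)$ induced by $x\mapsto z$ is an injection of fields, and a one-line remark to that effect would close the gap.
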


\begin{proof}
By Proposition~\ref{prop:exists}, we may identify the equalizer
with $\pi_X \colon X \times_{(Y \times_S Y)} Y \to X$. Since $\Delta \colon Y \to
  Y \times_S Y$ is a
  locally closed immersion, by Lemma~\ref{lem:fiber-product-with-lcimm} we have
  a continuous bijection
  \[
  |X \times_{(Y \times_S Y)} Y| \stackrel{j}{\rightarrow} |X| \times_{|Y\times_S
    Y|} |Y|
  \]
  induced by the diagram
  \begin{equation}
    \label{eq:absdiagram}
    \begin{tikzcd}[column sep=small]
      {}|X \times_{(Y \times_S Y)} Y{}|
      \ar[drr, bend left=25, "|\pi_Y|"]
      \ar[ddr, bend right=25, "|\pi_X|"']
      \ar[dr,dashed,"j"]
      &  & \\
      &  {}|X\mid \times_{{}|Y \times_S Y{}|} {}|Y{}|
      \ar[r,"\pi_{|Y|}"]
      \ar[d,"\pi_{|X|}"]
      & {}|Y{}|
      \ar[d,"|\Delta|"] \\
      & {}| X {}|
      \ar[r,"|f \times g|"]
      & {}| Y \times_S Y{}|
    \end{tikzcd}
  \end{equation}

First we show that $\pi_X$ is injective on points. Since $j$ is a bijection, it
suffices to show that $\pi_{|X|}$ is injective. Suppose that $(x_i,y_i) \in |X|
\times_{|Y \times_S Y|} |Y|$ for $i = 1,2$ have the same image under
$\pi_{|X|}$. Then $x_1 = x_2 =:x$. Going around the other way in diagram
\eqref{eq:absdiagram} shows that
\[
|\Delta| \circ \pi_{|Y|}(x,y_1) = |\Delta| \circ \pi_{|Y|}(x,y_2).
\]
That is, $\Delta(y_1) = \Delta(y_2)$. Since $\Delta$ followed by either
projection to $Y$ is the identity, we conclude that $y_1 = y_2$. Hence
$\pi_{|X|}$ is injective.

Now we describe the image of $i = \pi_X$. Suppose that $x \in \im(i)$. Let $h
\colon \Spec \funfld(x) \to X$ be the canonical morphism. Since $i$ is a locally
closed immersion, it is an isomorphism on residue fields, and we find that
$E(f,g) \times_X \Spec \funfld(x) \cong \Spec \funfld(x)$. It follows from
Proposition~\ref{prop:precomp} that $E(f\circ h, g \circ h) \cong \Spec
\funfld(x)$. Hence, $E(f \circ h, g \circ h) \to \Spec \funfld(x)$ is the
identity, and we conclude that $f \circ h = g \circ h$. That is, $f(x)
= g(x)$ and the maps $\funfld(y) \to \funfld(x)$ induced by $f$ and $g$ agree.

Conversely, suppose that $x \in X$ is such that $f(x) = g(x)$, and let us write
$y$ for this common value. We suppose further that the maps $\varphi, \gamma
\colon \funfld(y) \to \funfld(x)$ induced by $f$ and $g$ agree. As above, write
\hbox{$h \colon \Spec \funfld(x) \to X$}. Since the fiber $i^{-1}(x)$ is
homeomorphic to $E(f, g) \times_X \Spec \funfld(x)$, and since the latter is
isomorphic to $E(f \circ h, g \circ h)$ by Proposition~\ref{prop:precomp}, it
suffices to show that $E(f \circ h, g \circ h)$ is nonempty. To that end, we may
replace $X$ with $\Spec \funfld(x)$ and $h$ with the identity. As we are
assuming that $f(x) = g(x) = y$, the question is now local on $S$ and $Y$, so we
may assume $S = \Spec R$ and $Y = \Spec A$. Then $f$ and $g$ correspond to
$R$-algebra homomorphisms $\varphi, \psi \colon A \to \funfld(x)$,
respectively. Since $f(x) = g(x)$, these two homomorphisms have the same kernel
$\mm$. Moreover, the corresponding homomorphisms $A / \mm \to \funfld(x)$ agree,
so we conclude that $\varphi = \psi$. That is, $f = g$, and hence $E(f,g)$ is
nonempty, as desired.
\end{proof}

\begin{example}
Let $k$ be a field and set $X = \Spec k[\varepsilon]/(\varepsilon^2)$.  Choose two
elements $a,b\in{}k$ with $a\ne b$ and define morphisms $f, g \colon X \to
\Aff^1_k = \Spec k[t]$ by $t \mapsto a\varepsilon$ and $t \mapsto b\varepsilon$,
respectively. Then we claim that the equalizer is $E(f,g) = \Spec k$, given as a
closed subscheme of $X$.  To see this, first observe that the unique point $x
\in X$ maps to the origin $0 \in \Aff^1_k$, and the induced maps of residue
fields $k = \funfld(0) \to \funfld(x) = k$ are the identity. Hence, the
theorem shows that $E(f,g)$ consists of a single point. We also know that
$E(f,g)$ is a locally closed subscheme of $X$, so it is either $\Spec k$ or
$\Spec k[\varepsilon]/(\varepsilon^2)$. It is evidently not the latter because
$f$ and $g$ disagree as morphisms of schemes.
\end{example}

\begin{corollary}
  \label{cor:equalizer-as-set}
  Let $X$ be an $S$-scheme and $f,g \in X(S) = \Hom_S(S,X)$. Then the equalizer
  $i \colon E(f,g) \to S$ is injective as a map of sets, and identifies
  $|E(f,g)|$ with the set $\setof{s\in{}S\st f(s)=g(s)}$.
\end{corollary}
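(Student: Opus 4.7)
The plan is to derive this as a direct specialization of Theorem~\ref{thm:equalizer-as-set} applied with the roles of $X$ and $Y$ in that theorem played by $S$ and $X$ respectively. In that setting, $f, g\colon S \to X$ are $S$-morphisms, i.e.\ sections of the structure morphism $\pi \colon X \to S$. The theorem immediately gives that $|i|$ is injective, and gives the image as
\[
\setof{s \in S \st f(s) = g(s) =: y \text{ and the induced maps } \funfld(y) \to \funfld(s) \text{ from } f \text{ and } g \text{ agree}}.
\]
So the only thing to verify is that the residue-field condition is automatic in the present context, leaving the simpler description $\setof{s \in S \st f(s) = g(s)}$.

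To dispatch the residue-field condition, I would use the fact that $\pi \circ f = \pi \circ g = 1_S$. Fix $s \in S$ with $f(s) = g(s) = y$; then $\pi(y) = s$, and passing to residue fields the identities $\pi \circ f = 1_S$ and $\pi \circ g = 1_S$ yield
\[
f^\# \circ \pi^\# = \mathrm{id}_{\funfld(s)} = g^\# \circ \pi^\#,
\]
where $\pi^\# \colon \funfld(s) \to \funfld(y)$ is the residue-field map induced by $\pi$, and $f^\#, g^\# \colon \funfld(y) \to \funfld(s)$ are those induced by $f$ and $g$. Since $\pi^\#$ is an extension of fields it is injective, and admitting a one-sided inverse forces it to be an isomorphism; consequently $f^\# = g^\# = (\pi^\#)^{-1}$, and the residue-field condition in Theorem~\ref{thm:equalizer-as-set} holds automatically.

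Combining these observations, the description of the image of $|i|$ simplifies exactly to $\setof{s \in S \st f(s) = g(s)}$, and injectivity of $|i|$ is inherited directly from the theorem. There is no real obstacle here: the entire content is that the residue-field subtlety in Theorem~\ref{thm:equalizer-as-set} disappears for sections of a structure morphism, which is the only mildly delicate step.
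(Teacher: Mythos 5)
Your proposal is correct and matches the paper's proof essentially verbatim: both apply Theorem~\ref{thm:equalizer-as-set} with the source $S$ and target $X$, then use $\pi \circ f = \pi \circ g = 1_S$ to deduce that the induced residue-field maps $\funfld(y) \to \funfld(s)$ are both the inverse of the isomorphism $\pi^\#$, so the residue-field condition holds automatically. (One stylistic nit: a one-sided inverse alone does not force an isomorphism of fields; what makes the argument work is that the left inverse $f^\#$ is itself a field homomorphism, hence injective, which together with $f^\# \circ \pi^\# = \mathrm{id}$ gives surjectivity of $\pi^\#$ --- but this is the same canonical-isomorphism observation the paper makes.)
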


\begin{proof}
Write $\rho \colon X \to S$ for the structure morphism of $X$. Since $f$ is an
$S$-morphism, the composition $\rho \circ f$ is the identity. For $s \in S$,
this yields a canonical isomorphism $\funfld(f(s)) \stackrel{\sim}{\to} \funfld(s)$. In
particular, if $g(s) = f(s) = x$, then the induced isomorphisms of residue
fields $\funfld(x) \to \funfld(s)$ agree. Now apply the theorem.
\end{proof}

Finally, we close with an intuitive way to characterize the equalizer, at least in
the case where the target is separated.

\begin{proposition}
  \label{prop:ci}
Let $f,g \colon X \to Y$ be morphisms of $S$-schemes. If $Y$ is separated over
$S$, then the equalizer $E(f,g) \to X$ is a closed immersion. In particular, the
equalizer is the largest closed subscheme of $X$ on which $f$ and $g$ agree.
\end{proposition}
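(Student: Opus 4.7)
The plan is to identify $E(f,g) \to X$ as a base change of the diagonal $\Delta \colon Y \to Y \times_S Y$ and then invoke separatedness together with the stability of closed immersions under base change.

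Concretely, I would start from the fiber product description provided by Proposition~\ref{prop:exists}, namely $E(f,g) = X \times_{Y \times_S Y} Y$, where the map $E(f,g) \to X$ is the first projection, obtained by pulling back $\Delta$ along $f \times g \colon X \to Y \times_S Y$. The hypothesis that $Y/S$ is separated means precisely that $\Delta$ is a closed immersion (as opposed to merely a locally closed immersion as in Lemma~\ref{lem:diagonal-is-lcimm}). Since closed immersions are preserved under arbitrary base change, the projection $\pi_X \colon E(f,g) \to X$ is a closed immersion, which is the first assertion.

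For the second assertion, I would argue directly from the universal property. Let $Z \hookrightarrow X$ be any closed subscheme on which $f$ and $g$ agree, i.e.\ such that $f|_Z = g|_Z$. By the universal property of the equalizer, the inclusion $Z \hookrightarrow X$ factors uniquely through $E(f,g) \hookrightarrow X$, exhibiting $Z$ as a subscheme of $E(f,g)$. Conversely, $E(f,g) \hookrightarrow X$ is itself a closed subscheme on which $f$ and $g$ agree (by property (a) of the equalizer), so it is the largest such.

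There is no real obstacle here; the only subtle point is making sure the pullback identification is correctly set up, but this is exactly what Proposition~\ref{prop:exists} provides. The key input is the standard fact that closed immersions are stable under base change, which is where separatedness of $Y/S$ feeds into the argument.
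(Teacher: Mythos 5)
Your proof is correct and follows the same route as the paper: identify $E(f,g) \to X$ as the base change of the diagonal $\Delta \colon Y \to Y \times_S Y$ along $f \times g$ via Proposition~\ref{prop:exists}, then use that separatedness makes $\Delta$ a closed immersion and that closed immersions are stable under base change. Your verification of the ``largest closed subscheme'' clause via the universal property is a welcome addition that the paper leaves implicit.
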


\begin{proof}
By Proposition~\ref{prop:exists} the equalizer $E(f,g) \to X$ is a base
extension of the diagonal \hbox{$\Delta \colon Y \to Y \times_S Y$} via the
morphism $f \times g \colon X \to Y
\times_S Y$. Closed immersions are stable under base extension.
\end{proof}

\begin{example}
If $Y$ is not separated, then the equalizer of two morphisms $f,g \colon X \to
Y$ can fail to be a closed subscheme of $X$. For example, let $Y$ be the affine
line with doubled origin. Write $Y = Y_1 \cup Y_2$ with $Y_1 = Y_2 = \Aff^1$,
where $Y_1 \smallsetminus \{0\}$ is identified with $Y_2 \smallsetminus
\{0\}$. Let $f,g \colon \Aff^1 \to Y$ be the open immersions of $Y_1$ and $Y_2$,
respectively. Then the set of points on which $f$ and $g$ agree is $\Aff^1
\smallsetminus \{0\}$, which is not closed.
\end{example}


\section{Vanishing Loci for Sections of Vector Bundles}
\label{sec:ZeroSets}

Let $X$ be a scheme, and let $E$ be a vector bundle on $X$. For any global section
$s$ of $E$, we know intuitively what the zero locus of $s$ should be:
\[
\text{``zeroes of $s$'' } = \{ x \in X \ | \ s_x \in \mm_x E_x \}.
\]
However, this is only a set, and we would like to endow it with a scheme
structure. To that end, we should look for functions on $X$ that characterize
the vanishing of $s$. If we view $s$ geometrically as a map from $X$ to (the
total space of) $E$, then for all functions $f$ on $E$, the composition $f \circ
s$ is a function on $X$ that vanishes precisely when $s$ does. This motivates
the following definition.

\begin{definition}
\label{def:zero-scheme}
Given a scheme $X$, a vector bundle $E$ on $X$, and a global section $s$ of $E$,
we define the \textbf{zero scheme of $s$} to be the closed subscheme of $X$ cut
out by the ideal sheaf $\im(s^\vee)$. Here $s$ determines a homomorphism of
$\OO_X$-modules $s:\OO_X \to E$, and we take $s^\vee \colon E^\vee \to \OO_X$ to be
the dual homomorphism.
\end{definition}

\begin{remark}
Global sections of $E$ can be identified with $X$-morphisms from $X$ to the
geometric vector bundle $\operatorname{\bf{}Spec}(\Sym_X E^\vee)$
associated to $E$ (see \cite[II:ex.5.18]{Hartshorne_Bible}).  If $z$ is the zero
section, then we could also have defined the zero scheme of $s$ to be the
equalizer of $s$ and $z$, which is a closed immersion by
Proposition~\ref{prop:ci}.  In fact, Proposition~\ref{prop:universal_zero} below
will show that this agrees with our definition.
\end{remark}

The next proposition explains why this definition captures our intuition of
the zero locus of $s$. 

\begin{proposition}
\label{prop:intuitive-zeroes}
Let $X$ be a scheme, $E$ a vector bundle on $X$, and $s$ a global section
of $E$. As sets, we have
\[
Z\left(\im(s^{\vee})\right) = \{x \in X \colon s_x \in \mm_x E_x\}.
\]
\end{proposition}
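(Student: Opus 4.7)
The plan is to reduce to a local trivialization of $E$ and compute both sides of the claimed equality explicitly in coordinates.

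First I would note that both conditions are local on $X$: the set $Z(\im(s^\vee))$ is characterized as $\{x \in X : \im(s^\vee)_x \subseteq \mm_x\}$, and the condition $s_x \in \mm_x E_x$ depends only on the stalk of $E$ at $x$. So I fix a point $x \in X$ and an affine open neighborhood $U$ of $x$ over which $E$ admits a trivialization $\varphi \colon E|_U \xrightarrow{\sim} \OO_U^{\oplus n}$.

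Next, using $\varphi$, I would write $s|_U$ as a tuple $(s_1, \ldots, s_n)$ of elements of $\OO_X(U)$. The dual trivialization gives $E^\vee|_U \cong \OO_U^{\oplus n}$, and under this identification the dual map $s^\vee \colon E^\vee \to \OO_X$ becomes the map $\OO_U^{\oplus n} \to \OO_U$ sending $(a_1,\ldots,a_n) \mapsto \sum_i a_i s_i$. Hence $\im(s^\vee)|_U$ is exactly the ideal sheaf generated by $s_1, \ldots, s_n$, and its stalk at $x$ is the ideal $(s_{1,x}, \ldots, s_{n,x})$ of $\OO_{X,x}$.

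Then the standard set-theoretic description of the vanishing locus of an ideal sheaf gives
\[
x \in Z(\im(s^\vee)) \iff (s_{1,x}, \ldots, s_{n,x}) \subseteq \mm_x \iff s_{i,x} \in \mm_x \text{ for all } i.
\]
On the other hand, via the trivialization $\varphi$ on stalks, $E_x \cong \OO_{X,x}^{\oplus n}$ and $\mm_x E_x$ corresponds to $\mm_x^{\oplus n}$, so $s_x \in \mm_x E_x$ is equivalent to $s_{i,x} \in \mm_x$ for all $i$. Comparing the two characterizations finishes the proof.

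The only point that requires a small comment is that the right-hand side of the proposition, $s_x \in \mm_x E_x$, is independent of the choice of trivialization $\varphi$: this is immediate because $\mm_x E_x$ is intrinsically defined as a submodule of $E_x$. No serious obstacle arises; the argument is essentially a bookkeeping check after trivialization.
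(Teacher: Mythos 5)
Your proof is correct and follows essentially the same route as the paper's: reduce to an affine open where $E$ is trivial, identify $\im(s^\vee)$ with the ideal generated by the coordinates $s_1,\ldots,s_n$ of $s$, and observe that both membership conditions amount to all $s_i$ lying in $\mm_x$. Your added remarks on the locality of both sides and the independence of trivialization are fine but not needed beyond the paper's terser computation.
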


\begin{proof}
It suffices to work locally, so we may assume that $X = \Spec R$, $E = R^m$, and
$s = (s_1, \ldots, s_m) \in R^m$. Then $\im(s^{\vee}) = \left\{\sum r_i s_i
\colon r_i \in R\right\}$. So
\begin{align*}
\pp \in Z(\im(s^{\vee})) &\Leftrightarrow \pp \supseteq \im(s^{\vee}) \\
&\Leftrightarrow s_i \in \pp \text{ for all } i \\
&\Leftrightarrow s_{\pp} \in \pp E_{\pp}. \qedhere
\end{align*}
\end{proof}

\begin{remark}  Definition~\ref{def:zero-scheme} is meaningful---but not
correct---for sections of general sheaves.  Indeed, the intuitive notion of zero
locus of a section may not even be closed.  For example, let $X=\Spec\ZZ_p$,
$E=\FF_p$, and $s\in{}E$ any nonzero element.  Then at the generic point
$x=(0)$, we have $E_x=0$, so $s_x\in\mathfrak{m}_xE_x$; but at the closed point
$x=(p)$, we have $\mathfrak{m}_x=0$ and $s_x=s\ne0$, so
$s_x\notin\mathfrak{m}_xE_x$.  Thus the zero locus of $s$ is $X-\setof{(p)}$,
which is open and not closed. The zero scheme, as given by
Definition~\ref{def:zero-scheme}, is $Z(\im(s^\vee)) = X$ since $E^\vee = 0$.
\end{remark}

In the case of line bundles, we have an alternate characterization in terms of
annihilators. Though this is interesting in its own right, we do not use it in
what follows.

\begin{proposition}
\label{prop:annihilator}
Let $X$ be a scheme, $L$ a line bundle on $X$, and $s$ a global section of
$L$. We have an equality of ideal sheaves
\[
\im(s^{\vee}) = \Ann\left(L/\langle s \rangle\right),
\]
where $s^{\vee} \colon L^{\vee} \to \OO_X$ is the dual to the morphism $s \colon
\OO_X \to L$.
\end{proposition}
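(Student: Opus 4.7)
The plan is to reduce to the affine local case where $L$ is trivial and then observe that both sides coincide with the principal ideal generated by the image of $s$ under a trivialization.

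First I would note that both $\im(s^\vee)$ and $\Ann(L/\langle s\rangle)$ are defined as ideal subsheaves of $\OO_X$ in a manner that is compatible with restriction to open subsets: the image of a morphism of sheaves and the annihilator of a quotient sheaf both commute with sheafification and localization. Consequently, to verify the claimed equality, it suffices to work on an affine open cover $\{U_i\}$ of $X$ on which $L|_{U_i}$ is trivial.

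Fix such an affine open $U = \Spec R$ with a trivialization $\varphi \colon L|_U \stackrel{\sim}{\to} \OO_U$. Under $\varphi$, the section $s|_U$ corresponds to some element $f \in R$, and the morphism $s \colon \OO_X \to L$ becomes multiplication by $f$, i.e.\ the map $R \to R$ sending $1 \mapsto f$. The canonical trivialization of the dual $L^\vee|_U \cong \OO_U$ (dual to $\varphi$) identifies $s^\vee \colon L^\vee \to \OO_X$ with the same multiplication-by-$f$ map $R \to R$. Hence on $U$ we have $\im(s^\vee) = fR$.

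Next I would compute the annihilator side under the same trivialization. The submodule $\langle s\rangle \subseteq L$ corresponds to $fR \subseteq R$, so $L/\langle s\rangle$ corresponds to $R/fR$. An element $r \in R$ annihilates $R/fR$ if and only if $r \cdot 1 \in fR$, that is, $r \in fR$. Therefore $\Ann(L/\langle s\rangle)|_U = fR$ as well. Since the two ideal sheaves agree on every $U$ in the cover, they are equal globally.

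The only subtlety worth being careful about is that the identification of $s^\vee$ as multiplication by $f$ under the dual trivialization is independent of the choice of $\varphi$ (changing $\varphi$ by a unit $u \in R^\times$ rescales both $s$ and $s^\vee$ by $u$, leaving the principal ideal unchanged), so the local computation descends to the intrinsic statement. This is mild and the whole argument reduces to the one-line ring-theoretic identity $\Ann(R/fR) = fR$.
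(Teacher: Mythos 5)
Your proof is correct and follows essentially the same route as the paper's: reduce to the affine case $X=\Spec R$ with $L$ trivialized, identify $s^{\vee}$ with multiplication by the element $f$ corresponding to $s$, and observe that both $\im(s^{\vee})$ and $\Ann(R/fR)$ equal the principal ideal $(f)$. Your additional remarks on compatibility with localization and independence of the trivialization are harmless elaborations of the paper's terse ``it suffices to work locally,'' not a different argument.
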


\begin{proof}
It suffices to work locally, in which case we take $X = \Spec R$ and $L =
R$. Then $\Ann(L / \langle s \rangle ) = (s)$. An element of $L^{\vee}$ is given
by multiplication-by-$r$ for some $r \in R$. So 
\[
\im(s^{\vee}) = \{rs \colon r \in R\} = (s). \qedhere
\]
\end{proof}

\begin{remark}
The same ``zero locus'' description does not apply to $\Ann(E/\langle s\rangle)$
for a vector bundle $E$ of rank larger than~1. For example, if $X = \Spec R$, $E
= R \oplus R$, and $s = (1,0)$, we see that the map $s^\vee \colon E^\vee \to R$
is given by dot product with $s$. Hence, 
\begin{eqnarray*}
\im(s^{\vee}) = R &\Longrightarrow& Z\left(\im(s^{\vee})\right) = \varnothing \\
\Ann\left(E/\langle s \rangle\right) = 0 &\Longrightarrow&
Z\left(\Ann\left(E/\langle s \rangle\right)\right) = X.
\end{eqnarray*}
So the annihilator is not capturing the fact that $s$ is nowhere vanishing.
\end{remark}

To conclude this section, we show that the zero scheme of a section of a vector
bundle is characterized by a universal property.

\begin{proposition}
\label{prop:universal_zero}
Let $X$ be a scheme, $E$ a vector bundle on $X$, and $s$ a global section of
$E$. Write $Z$ for the zero scheme of the section $s$, and $i:Z \to X$ for the
canonical closed immersion.  Then
\begin{enumerate}
\item\label{zero1}  $i^*(s) = 0;$
\item\label{zero2} If $j: W \to X$ is a morphism such that  $j^*(s) = 0 \in j^*(E)$, then $j$
   factors uniquely through~$i$.
\end{enumerate}
\end{proposition}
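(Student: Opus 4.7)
The plan is to verify both statements by passing to an open cover of $X$ trivializing $E$. Pick such a cover $\{U_\alpha\}$, and over each $U_\alpha$ choose an isomorphism $E|_{U_\alpha} \cong \OO_{U_\alpha}^m$ so that $s|_{U_\alpha}$ is represented by an $m$-tuple $(s_{\alpha,1},\dots,s_{\alpha,m})$ of sections of $\OO_X$. Dualizing, the map $s^\vee|_{U_\alpha} \colon E^\vee|_{U_\alpha} \cong \OO_{U_\alpha}^m \to \OO_{U_\alpha}$ is pairing against $s$, so $\im(s^\vee)|_{U_\alpha}$ is the ideal sheaf generated by the $s_{\alpha,i}$. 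Restricting further to an affine open $\Spec R \subset U_\alpha$, the zero scheme is cut out by the ideal $I = (s_{\alpha,1},\dots,s_{\alpha,m}) \subset R$, so $Z \cap \Spec R = \Spec(R/I)$.

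To prove \eqref{zero1}, I note that $i^*E$ is free of rank $m$ on $Z \cap \Spec R$, and the components of $i^*s$ in this trivialization are the images of the $s_{\alpha,i}$ in $R/I$, which vanish by construction. Since this local computation holds on every chart and the definition of $i^*s$ is intrinsic, $i^*s = 0$ globally.

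For \eqref{zero2}, uniqueness is immediate: $i$ is a closed immersion, hence a monomorphism of schemes. For existence, I will invoke the standard universal property of a closed immersion $i \colon Z \to X$ defined by an ideal sheaf $\mathcal{I}$: a morphism $j \colon W \to X$ factors through $i$ if and only if the map $j^{-1}\mathcal{I} \to \OO_W$ induced by $j^\#$ is zero, i.e., $j^{-1}\mathcal{I} \cdot \OO_W = 0$. Taking an affine $W = \Spec A$ mapping into $\Spec R$ via $\varphi \colon R \to A$, the hypothesis $j^*s = 0$ reads componentwise as $\varphi(s_{\alpha,i}) = 0$ for all $i$, so $\varphi(I) = 0$, which is precisely the criterion for $j$ to factor through $Z \cap \Spec R$. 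Because the factorization through a closed immersion is unique, the local factorings glue to a global morphism $W \to Z$ with $i \circ (W \to Z) = j$.

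I do not anticipate any serious obstacle. The only thing to be careful about is checking that the local trivialization-dependent descriptions of $\im(s^\vee)$ and $i^*s$ patch, but this is automatic because both are defined without reference to a trivialization, so choosing one merely reveals the already-well-defined global object.
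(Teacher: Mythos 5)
Your proof is correct and follows essentially the same route as the paper's: reduce to affine charts where $E$ is free, identify the zero scheme with $\Spec R/I$ for $I = (s_1,\dots,s_m)$, verify \eqref{zero1} by the evident local computation, and obtain \eqref{zero2} by factoring through the quotient ring on charts and gluing via uniqueness. Your packaging of the gluing step through the ideal-sheaf criterion for factoring through a closed immersion (equivalently, that closed immersions are monomorphisms) is just a named form of the paper's ``uniqueness of factoring through a quotient ring,'' not a genuinely different argument.
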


\begin{proof}  
The first statement is local so it suffices to take $X$ affine, $X=\Spec R$, $E
= R^m$, and $s = (s_1, \ldots, s_m)$. By the proof of
Proposition~\ref{prop:intuitive-zeroes}, the zero scheme of $s$ is cut out by
the ideal
\[
I = (s_1, \ldots, s_m) \subset R,
\]
so $Z = \Spec R/I$. It follows that $i^*E = (R/I)^m$ and $i^*(s) = (0,\ldots,
0)$, proving \eqref{zero1}.  For \eqref{zero2}, by taking suitable open covers,
we can reduce to the case of $W$ and $X$ affine and $E$ free.  Say $X=\Spec R$
and $W=\Spec T$.  Then $s = (s_1, \ldots, s_m)$ is an element of $R^m$, and $Z =
\Spec R/I$ as above. Now $j$ gives a ring homomorphism $\varphi: R \to T$, and
$j^*(s)=0$ becomes $\varphi(s_k) = 0$ for $k = 1, \ldots, m$, so that $\varphi$
factors uniquely through $R/I$. That is, $j$ factors through $i$, at least
locally.  But by the uniqueness of factoring through a quotient ring, these
locally defined maps $W \to Z$ must agree on intersections, hence define a
global morphism.
\end{proof}

As a final remark, we note that everything in this section can be generalized to
the case of \textit{several} sections of a vector bundle $E$. If $s_0, \ldots,
s_n$ are global sections of $E$, then the common zero scheme of these sections
is the closed subscheme associated with the ideal sheaf $\im(s_0^\vee \oplus
\cdots \oplus s_n^\vee)$, where $s_0^\vee \oplus \cdots \oplus s_n^\vee$ is the
dual of the morphism of sheaves $\OO_X \to \bigoplus_{i=0}^n E$ given by $1
\mapsto (s_0, \ldots, s_n)$. Analogues of
Propositions~\ref{prop:intuitive-zeroes} and~\ref{prop:universal_zero} hold.


\section{Strongly Distinct Morphisms}
\label{sec:StrongDistinctness}

\begin{definition}
  \label{def:sd}
Let $X$ and $Y$ be schemes. We say that two morphisms $f,g \colon X \to Y$
are \textbf{strongly distinct} if the equalizer of $f$ and $g$ is
$\varnothing \to X$. (Here $\varnothing$ is the empty scheme, which is the
initial object in the category of schemes.)  We write $f \sd g$ to denote that
$f$ and $g$ are strongly distinct.
\end{definition}

A consequence of Theorem~\ref{thm:equalizer-as-set} is that two morphisms
$f, g \colon X \to Y$ are strongly distinct if they ``disagree
at every point of $X$'':

\begin{corollary}
Let $f, g \colon X \to Y$ be morphisms of schemes. If $f(x)\neq g(x)$ for all $x
\in X$, then $f$ and $g$ are strongly distinct. 
\end{corollary}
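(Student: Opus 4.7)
The plan is to derive this as a direct consequence of Theorem~\ref{thm:equalizer-as-set}, which characterizes the image of the equalizer map $|i| \colon |E(f,g)| \to |X|$ set-theoretically. Since that theorem tells us $|i|$ is injective, it suffices to prove the underlying topological space $|E(f,g)|$ is empty, since a scheme with empty underlying space is the initial object $\varnothing$.

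First, I would invoke Theorem~\ref{thm:equalizer-as-set} to identify the image of $|i|$ with the set
\[
\bigl\{\,x \in X \,:\, f(x) = g(x) =: y \text{ and the induced maps } \kappa(y) \to \kappa(x) \text{ agree}\,\bigr\}.
\]
Second, I would observe that under the hypothesis $f(x) \neq g(x)$ for every $x \in X$, this set is empty: no point $x$ can satisfy the condition $f(x) = g(x)$, so a fortiori none can also satisfy the residue field condition. Third, since $|i|$ is injective with empty image, its source $|E(f,g)|$ is itself empty.

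Finally, I would conclude that $E(f,g) = \varnothing$ as a scheme, because a scheme whose underlying topological space is empty coincides (canonically) with the empty scheme, the initial object in the category of schemes. Therefore $E(f,g) \to X$ is the morphism $\varnothing \to X$, which by Definition~\ref{def:sd} means $f \sd g$.

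There is no real obstacle here: the work was done in Theorem~\ref{thm:equalizer-as-set}, and this corollary is essentially a rephrasing of the weaker half of that theorem's content (the condition on agreement at the level of points, without needing to also check agreement of induced maps on residue fields). The only subtlety worth noting in the write-up is why the hypothesis is merely sufficient and not necessary: strong distinctness can hold even when $f$ and $g$ agree set-theoretically at some points, provided they induce different maps on residue fields there.
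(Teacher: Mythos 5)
Your proof is correct and is exactly the argument the paper intends: the corollary is stated there as an immediate consequence of Theorem~\ref{thm:equalizer-as-set}, whose set-theoretic description of the equalizer's image becomes empty under your hypothesis, forcing $E(f,g)=\varnothing$. Your closing remark on why the converse fails (agreement of points without agreement of residue-field maps) matches the paper's own subsequent counterexample with $\Spec\QQ(i)\to\Aff^1_\ZZ$, so nothing is missing.
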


\begin{remark}
The converse of this corollary does not hold in general. For example, consider
the two morphisms $f,g \colon \Spec \QQ(i) \to \Spec \ZZ[t] = \Aff^1_\ZZ$ given
by $t \mapsto \pm i$. Then $f$ and $g$ have the same image, namely the prime
ideal $(t^2 + 1)$.  But in fact they are strongly distinct.  To see this,
consider the induced $\QQ(i)$ morphisms
$f',g' \colon\Spec \QQ(i) \to \Spec \QQ(i)[t] = \Aff^1_{\QQ(i)}$ By
Proposition~\ref{prop:which-category}, these have the same equalizer as $f$ and
$g$.  But now $f'$ and $g'$ have distinct images in $\Aff^1_{\QQ(i)}$, so
$E(f,g) = E(f',g') = \varnothing$.
\end{remark}

By Corollary~\ref{cor:equalizer-as-set}, the converse of the previous corollary
does hold if we restrict our attention to $S$-points of an $S$-scheme:

\begin{corollary}
  \label{cor:moral_sd}
Let $X$ be a scheme over $S$. Two points $f,g \in X(S) = \Hom_S(S,X)$ are
strongly distinct if and only if $f(s)\neq g(s)$ for all $s \in S$.
\end{corollary}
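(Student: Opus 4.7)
The plan is to chain two observations: the empty scheme is characterized by having empty underlying point set, and Corollary~\ref{cor:equalizer-as-set} already identifies $|E(f,g)|$ with the set of agreement points.

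First I would unfold the definition of strong distinctness. By Definition~\ref{def:sd}, $f \sd g$ means the equalizer morphism $i \colon E(f,g) \to S$ has source equal to the empty scheme $\varnothing$. A scheme is empty if and only if its underlying topological space is empty (it is the initial object precisely because there are no points to map anywhere), so $f \sd g$ is equivalent to $|E(f,g)| = \varnothing$.

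Next I would apply Corollary~\ref{cor:equalizer-as-set}, which is the specialization of Theorem~\ref{thm:equalizer-as-set} to the case of $S$-morphisms $f, g \colon S \to X$. That corollary identifies $|E(f,g)|$ with the subset $\{s \in S : f(s) = g(s)\}$ of $S$. Combining this with the previous paragraph, $|E(f,g)|$ is empty if and only if there is no $s \in S$ at which $f(s) = g(s)$, i.e., if and only if $f(s) \neq g(s)$ for every $s \in S$. This is exactly the desired equivalence.

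There is no real obstacle here; the substance of the argument is packaged inside Theorem~\ref{thm:equalizer-as-set}, whose proof handled the subtle point that for $S$-valued points, the residue field compatibility condition in the general theorem becomes automatic (since composition with the structure map $X \to S$ is the identity, the induced map $\kappa(f(s)) \to \kappa(s)$ is forced to be the canonical isomorphism, and similarly for $g$). Consequently, the only thing to check beyond pointwise equality is this residue field compatibility, which is free here. The corollary is therefore essentially a two-line deduction from Corollary~\ref{cor:equalizer-as-set} together with the fact that a scheme is empty precisely when it has no points.
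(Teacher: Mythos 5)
Your proof is correct and follows the same route as the paper: the paper deduces this corollary directly from Corollary~\ref{cor:equalizer-as-set}, whose proof handles exactly the residue-field compatibility point you identify (the structure map forces the induced maps $\kappa(f(s)) \to \kappa(s)$ to agree), so that emptiness of $|E(f,g)|$ is equivalent to pointwise disagreement. Your added observation that a scheme is empty precisely when its underlying space is empty is the only glue needed, and it is sound.
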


\begin{example}
Let $X$ be a scheme of finite type over a field $k$ --- \emph{i.e.}, a variety. Two
points $f,g \in X(k)$ are strongly distinct if and only if they are distinct in
the usual sense.
\end{example}

The property of being strongly distinct is preserved under precomposition by an
arbitrary morphism. 

\begin{proposition}
\label{prop:sd-preserved}
Let $X,Y,Z$ be schemes, and let $h \colon Z \to X$ be a morphism. The induced
map
\begin{equation*}
h^*:\Hom(X,Y) \to \Hom(Z,Y)
\end{equation*}
preserves strong distinctness. 
\end{proposition}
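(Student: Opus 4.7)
The plan is to invoke Proposition~\ref{prop:precomp} directly. Suppose $f, g \colon X \to Y$ are strongly distinct, so that $E(f,g) = \varnothing$. I want to show that $f \circ h, g \circ h \colon Z \to Y$ are also strongly distinct, i.e., $E(f \circ h, g \circ h) = \varnothing$.

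By Proposition~\ref{prop:precomp}, the equalizer of $f \circ h$ and $g \circ h$ is given by the fiber product $E(f,g) \times_X Z$. Since $E(f,g) = \varnothing$, this fiber product is $\varnothing \times_X Z$. The empty scheme is the initial object in the category of schemes, and its fiber product with any scheme is again empty (as $\varnothing$ has no points and fiber products of schemes have underlying point-set described by Lemma~\ref{lem:points-of-fiber-product}, which yields nothing when one factor is empty). Therefore $E(f \circ h, g \circ h) = \varnothing$, so $f \circ h$ and $g \circ h$ are strongly distinct.

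There is essentially no obstacle here beyond recognizing that Proposition~\ref{prop:precomp} does all the work, and that the fiber product of the empty scheme with anything is empty. I would simply write the proof as a short two-sentence application of these facts.
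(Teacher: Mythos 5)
Your proof is correct and is essentially the paper's own argument: both reduce immediately to Proposition~\ref{prop:precomp} and then observe that $E(f\circ h, g\circ h) \cong E(f,g)\times_X Z = \varnothing\times_X Z$ must be empty. The only (immaterial) difference is the justification of that last step: you check emptiness on points via Lemma~\ref{lem:points-of-fiber-product} (implicitly using that a scheme with empty underlying space is the empty scheme, which is fine), whereas the paper notes that $\varnothing$ is \emph{strictly} initial --- any scheme admitting a morphism to $\varnothing$ is empty, since a ring map $0 \to R$ forces $R = 0$ --- a formulation it then exploits in the following remark to generalize the proposition to any category with a strictly initial object.
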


\begin{proof}
Proposition~\ref{prop:precomp} shows that the equalizer of $f \circ h$ and $g
\circ h$ satisfies $E(f\circ h,g\circ h) \cong E(f,g) \times_X Z$. Since $f$ and
$g$ are strongly distinct, the first projection gives a morphism $E(f\circ
h,g\circ h) \to E(f,g) = \varnothing$. It follows that $E(f \circ h, g \circ h)
= \varnothing$ since that is the only scheme that admits a morphism to the empty
scheme. (A map $0 \to R$ is a ring homomorphism if and only if $R = 0$.) Hence
$f \circ h$ and $g \circ h$ are strongly distinct, as desired.
\end{proof}

\begin{remark} 
Definition~\ref{def:sd} can be generalized in an obvious way to talk about
strongly distinct morphisms in a category with an initial object
$\varnothing$. If we insist that the initial object is \textit{strictly
  initial} --- \emph{i.e.}, any morphism $X \to \varnothing$ is an isomorphism ---
then Proposition~\ref{prop:sd-preserved} and its proof are valid in this more general
setting.
\end{remark}

We now give an alternate description of the equalizer in the case of morphisms
to $\PP^1$.

\begin{theorem}
\label{thm:equalizer}
Let $a,b: S \to \PP^1$ be morphisms determined by line bundles $A$ and $B$ with
global sections $(a_0,a_1)$ and $(b_0,b_1),$ respectively.  Let $Z \subset S$ be
the zero scheme of the section $a_0\otimes{}b_1-a_1\otimes{}b_0$ of $A \otimes
B$. Then $Z\to{}S$ is the equalizer of $a$ and $b$.
\end{theorem}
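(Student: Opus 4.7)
The plan is to verify that $i \colon Z \to S$ satisfies the universal property of the equalizer. Since $i$ is a closed immersion, any factorization $j = i \circ k$ of a morphism $j \colon W \to S$ through $i$ is automatically unique, so it suffices to show that for every $j \colon W \to S$ one has $a \circ j = b \circ j$ if and only if $j$ factors through $i$. By Proposition~\ref{prop:universal_zero}, $j$ factors through $i$ exactly when $j^*(a_0 \otimes b_1 - a_1 \otimes b_0)$ vanishes in $\Gamma(W, j^*A \otimes j^*B)$. On the other hand, using the classification of morphisms to projective space invoked in the proof of Proposition~\ref{prop:simplePGLn}, the morphisms $a \circ j$ and $b \circ j$ are represented by $(j^*A; j^*a_0, j^*a_1)$ and $(j^*B; j^*b_0, j^*b_1)$, and they coincide if and only if there exists an isomorphism $\varphi \colon j^*A \to j^*B$ with $\varphi(j^*a_i) = j^*b_i$ for $i = 0, 1$. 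The theorem thus reduces to the equivalence: such a $\varphi$ exists if and only if $j^*a_0 \otimes j^*b_1 = j^*a_1 \otimes j^*b_0$.

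The forward direction is a short computation. Given $\varphi$, apply $\mathrm{id} \otimes \varphi^{-1}$ to carry $j^*a_0 \otimes j^*b_1$ and $j^*a_1 \otimes j^*b_0$ into $j^*A \otimes j^*A$, obtaining $j^*a_0 \otimes j^*a_1$ and $j^*a_1 \otimes j^*a_0$ respectively. These agree because the tensor square of any line bundle is symmetric on sections (after a local trivialization both expressions correspond to the same product of scalars in $R$). Applying the inverse $\mathrm{id} \otimes \varphi$ to both sides recovers the desired identity.

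The reverse direction is the main obstacle: I construct $\varphi$ over an affine cover of $W$ and glue. After restricting to $U = \Spec R$ where $j^*A$ and $j^*B$ are simultaneously trivialized, the four sections become elements $\alpha_0, \alpha_1, \beta_0, \beta_1 \in R$. Because pullback preserves surjections of sheaves, the generation conditions persist, so $(\alpha_0, \alpha_1) = R = (\beta_0, \beta_1)$, and the hypothesis reads $\alpha_0 \beta_1 = \alpha_1 \beta_0$. Picking $r_0, r_1 \in R$ with $r_0 \alpha_0 + r_1 \alpha_1 = 1$, set $u := r_0 \beta_0 + r_1 \beta_1$; the cross-identity immediately yields $u \alpha_i = \beta_i$ for $i = 0, 1$, and a symmetric construction using generators of $(\beta_0, \beta_1)$ produces an inverse for $u$, so $u \in R^\times$. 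Multiplication by $u$ thus defines a local isomorphism $j^*A|_U \to j^*B|_U$ carrying $\alpha_i$ to $\beta_i$. The unit $u$ is uniquely determined by the conditions $u \alpha_i = \beta_i$ (since $\alpha_0, \alpha_1$ span the unit ideal), so these local isomorphisms agree on overlaps and glue to the required global $\varphi \colon j^*A \to j^*B$. The key subtlety is that neither the vanishing hypothesis nor the generation hypothesis alone is enough: the two enter in tandem, generation of $A$ producing the candidate unit $u$ and the vanishing forcing $u$ to intertwine $\alpha_i$ with $\beta_i$ for both indices at once.
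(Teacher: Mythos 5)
Your proof is correct, and its skeleton matches the paper's: both verify the universal property by combining Proposition~\ref{prop:universal_zero} with the classification of morphisms to $\PP^1$ in terms of a line bundle and two generating sections, and both hinge on the equivalence ``an isomorphism $\varphi$ carrying $(j^*a_0, j^*a_1)$ to $(j^*b_0, j^*b_1)$ exists if and only if $j^*(a_0\otimes b_1 - a_1\otimes b_0)=0$.'' Where you diverge is in the two local arguments. For the direction ``vanishing implies isomorphism,'' the paper's Lemma~\ref{lem:2} refines the cover so that one of $a_0,a_1$ generates $A$ on each piece, writes $a_1 = r a_0$, and deduces $b_1 = r b_0$; you instead use a partition-of-unity trick, choosing $r_0\alpha_0 + r_1\alpha_1 = 1$ and setting $u = r_0\beta_0 + r_1\beta_1$, with the cross-relation $\alpha_0\beta_1 = \alpha_1\beta_0$ forcing $u\alpha_i = \beta_i$ and the symmetric construction producing $u^{-1}$. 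This buys you an explicit formula for the multiplier together with its uniqueness (since $(\alpha_0,\alpha_1)=R$, any $w$ fixing both sections is $1$), which makes the gluing on overlaps immediate — the paper's lemma asserts uniqueness but leans on the case analysis for it. For the converse direction, the paper pulls the identity $x_0\otimes x_1 = x_1\otimes x_0$ in $\OO(2)$ back from $\PP^1$ along $a\circ j = b\circ j$; you transport the sections via $\mathrm{id}\otimes\varphi^{-1}$ into $(j^*A)^{\otimes 2}$ and invoke symmetry of the tensor square of a line bundle there. These are the same underlying fact in different clothing: the paper's version makes the canonicity manifest by sourcing it from the universal bundle on $\PP^1$, while yours stays entirely on the base $W$ and avoids mentioning $\OO(2)$ at all. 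One stylistic note: your uniqueness claim for $u$ is phrased relative to chosen trivializations; to glue you should compare the two isomorphisms on a common refinement (or observe intrinsically that an automorphism of $j^*A$ fixing two generating sections is the identity), but this is a phrasing issue, not a gap.
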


We give the proof after first addressing a preliminary statement.

\begin{lemma}
\label{lem:2}
Let $Z$ be a scheme and $A, B$ be line bundles on $Z$ generated by global
sections $(a_0,a_1)$ and $(b_0,b_1)$, respectively, and suppose $a_0\otimes{}b_1
= a_1\otimes{}b_0\in A\otimes{}B$.  Then there exists a unique isomorphism $A
\to B$ mapping $a_0 \mapsto b_0$ and $a_1\mapsto b_1$.
\end{lemma}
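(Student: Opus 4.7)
The plan is to construct the isomorphism locally on a cover where either $a_0$ or $a_1$ is a generator, and then glue. Let $U_i \subset Z$ ($i = 0, 1$) be the open subscheme on which $a_i$ generates $A$. Since $(a_0, a_1)$ generate $A$, at each point of $Z$ at least one of them generates the stalk, so $Z = U_0 \cup U_1$.

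The main work is on $U_0$: I will first show that $b_0$ generates $B|_{U_0}$ and that the relation $a_0 \otimes b_1 = a_1 \otimes b_0$ forces $b_1$ to be the same $\mathcal{O}_{U_0}$-multiple of $b_0$ that $a_1$ is of $a_0$. Concretely, at a point $z \in U_0$, write $a_1 = \alpha\, a_0$ in $A_z$; since $a_0$ is a free generator of $A_z$, the tensor map $B_z \xrightarrow{\;\cdot a_0\;} A_z \otimes B_z$ is an isomorphism, and the hypothesis rewrites as $b_1 \cdot a_0 = \alpha b_0 \cdot a_0$, giving $b_1 = \alpha b_0$ in $B_z$. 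Therefore $b_0$ already generates $B_z$, so $b_0$ generates $B|_{U_0}$. Now $A|_{U_0}$ and $B|_{U_0}$ are both free of rank one with specified generators $a_0$ and $b_0$, and I define $\varphi_0 \colon A|_{U_0} \to B|_{U_0}$ to be the unique $\mathcal{O}_{U_0}$-module isomorphism sending $a_0 \mapsto b_0$. By the relation just derived, $\varphi_0(a_1) = \varphi_0(\alpha a_0) = \alpha b_0 = b_1$. Symmetrically, on $U_1$ define $\varphi_1 \colon A|_{U_1} \to B|_{U_1}$ by $a_1 \mapsto b_1$, and check $\varphi_1(a_0) = b_0$.

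To glue, I will check that $\varphi_0$ and $\varphi_1$ agree on $U_0 \cap U_1$. On the intersection $a_0$ is a generator, and both maps send $a_0 \mapsto b_0$, so they coincide by the local uniqueness built into the construction. The glued map $\varphi \colon A \to B$ is then an isomorphism (an isomorphism on stalks) sending $a_0 \mapsto b_0$ and $a_1 \mapsto b_1$, establishing existence. Uniqueness is immediate from the same principle: any $\mathcal{O}_Z$-module homomorphism $A \to B$ sending $a_0 \mapsto b_0$ is forced on $U_0$ (where $a_0$ generates), and similarly on $U_1$, so two such isomorphisms agree on the cover $U_0 \cup U_1 = Z$.

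The only genuinely substantive step is the local argument that extracts $b_1 = \alpha b_0$ from the hypothesis on $A \otimes B$; once that is in hand, existence of a well-defined $\varphi_0$ and the gluing/uniqueness are formal.
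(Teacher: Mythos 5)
Your proof is correct and takes essentially the same approach as the paper's: cover $Z$ by open sets on which $a_0$ or $a_1$ generates $A$, use freeness to cancel $a_0$ from the relation $a_0\otimes(b_1-\alpha b_0)=0$ and deduce $b_1=\alpha b_0$ (hence that $b_0$ generates $B$ there), define the isomorphism by sending generator to generator, and glue. Your choice of the two canonical opens where $a_i$ generates, and your explicit verification of uniqueness (which the paper leaves implicit in ``clearly these isomorphisms glue''), are minor streamlinings rather than a different argument.
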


\begin{proof}  We can find an affine open cover $\setof{U_i}$ of $Z$ such that $A$ and $B$ are
both trivial on each $U_i$.  We can further assume that on each $U_i$, either $a_0$ or
$a_1$ generates $A$.  Suppose $a_0$ generates $A$ on
$U_i$, and let $a_1 = r_i a_0$. Then
\begin{equation*}
\begin{aligned}
	0 &= a_0 \otimes b_1 - a_1 \otimes{} b_0 \\
	  &= a_0 \otimes b_1 - r_i a_0 \otimes{} b_0 \\
	  &= a_0 \otimes ( b_1 - r_i b_0 ).
\end{aligned}
\end{equation*}
Now $A|_{U_i}$ is a rank-one free module with generator $a_0$, so $a_0\otimes{}u = 0$ implies
$u=0$, and we get $b_1 = r_i b_0$, from which it also follows that $b_0$ generates $B$
on $U_i$.  Then we can define an isomorphism $\varphi_i : A|_{U_i} \to B\mid_{U_i}$ by
sending $a_0$ to $b_0$, which then also maps $a_1$ to $b_1$.  If instead $a_1$ generates $A$
on $U_i,$ a symmetrical argument shows that also $b_1$ generates and there is
again an isomorphism $\varphi_i$ sending $a_0$ to $b_0$ and $a_1$ to $b_1.$  Clearly these
isomorphisms glue to form a global isomorphism $A \to B$.
\end{proof}

\begin{proof}[Proof of Theorem~\ref{thm:equalizer}] 
Let $i:Z \to{}S$ be the canonical closed immersion, where $ZS$ is the zero
scheme of $u:=a_0\otimes{}b_1-a_1\otimes{}b_0.$ By
Proposition~\ref{prop:universal_zero} and the definition of $Z$, we have $i^*(u)
= 0$.  Then by Lemma~\ref{lem:2}, there is an isomorphism $i^*(A)\to{}i^*(B)$
taking $i^*(a_0)$ to $i^*(b_0)$ and $i^*(a_1)$ to $i^*(b_1).$ It follows that
the data $\left(i^*(A);i^*(a_0),i^*(a_1)\right)$ and
$\left(i^*(B);i^*(b_0),i^*(b_1)\right)$ define the same morphism $Z \to \PP^1$.
But these morphisms are $a\circ{}i$ and $b\circ{}i$ respectively, so we find
$a\circ{}i=b\circ{}i.$

Now suppose $j: W \to S$ is such that $a\circ{}j = b\circ{}j$.  Let $x_0,x_1$ be the standard
global sections generating $\OO(1)$ on $\PP^1$.  There are isomorphisms
\begin{equation*}
\begin{aligned}
	\varphi&: a^*(\OO(1)) \to A, & &\varphi(a^*(x_0)) = a_0, & &\varphi(a^*(x_1)) = a_1 \\
	\psi&: b^*(\OO(1)) \to B, & &\psi(b^*(x_0)) = b_0, & &\psi(b^*(x_1)) = b_1
\end{aligned}
\end{equation*}
so that $a_0\otimes{}b_1-a_1\otimes{}b_0 =
(\varphi\otimes{}\psi)(a^*(x_0)\otimes{}b^*(x_1) - a^*(x_1)\otimes{}b^*(x_0))$.  But then
$\varphi$ and $\psi$ pull back to isomorphisms $j^*(\varphi)$ and $j^*(\psi)$ from
$\left(a\circ{}j\right)^*(\OO(1)) = (b\circ{}j)^*(\OO(1))$ to $j^*(A)$ and $j^*(B)$,
respectively, and we have:
\begin{equation*}
\begin{aligned}
	j^*( a_0\otimes{}b_1 - a_1\otimes{}b_0 )
	&=  (j^*(\varphi)\otimes{}j^*(\psi))\,
                \big(  (a\circ{}j)^*(x_0)\otimes{}(b\circ{}j)^*(x_1)
                 - (a\circ{}j)^*(x_1)\otimes{}(b\circ{}j)^*(x_0) \big) \\
	&=  (j^*(\varphi)\otimes{}j^*(\psi))\, \big( (a\circ{}j)^*( x_0\otimes{}x_1 - x_1\otimes{}x_0 ) \big) \\
	&=  0,
\end{aligned}
\end{equation*}
since $x_0\otimes{}x_1 = x_1\otimes{}x_0\in\OO(1)\otimes\OO(1) = \OO(2)$.  By
Proposition~\ref{prop:universal_zero}, it follows that $j$ factors uniquely through
$i:Z\to S$.  This proves the theorem.
\end{proof}

An immediate consequence of Theorem~\ref{thm:equalizer} is the following
important criterion for strong distinctness, which we will use frequently in the
sequel.

\begin{corollary}[Determinant Criterion]
\label{cor:sd-criterion}
Let $a,b \in \PP^1(S)$ be morphisms determined by line bundles $A$ and $B$ with
global sections $(a_0,a_1)$ and $(b_0,b_1),$ respectively. Then $a \sd b$ if and
only if $a_0 \otimes b_1 - a_1 \otimes b_0$ generates $A \otimes B$.  In
particular, if $a \sd b$, then $B \cong A^{-1}$.
\end{corollary}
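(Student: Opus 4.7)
The plan is to deduce the corollary directly from Theorem~\ref{thm:equalizer} together with the characterization of zero schemes from Section~\ref{sec:ZeroSets}. Set $u := a_0 \otimes b_1 - a_1 \otimes b_0 \in \Gamma(S, A \otimes B)$. By Theorem~\ref{thm:equalizer}, the equalizer $E(a,b) \to S$ is the zero scheme $Z(u)$ of this section. By Definition~\ref{def:sd}, $a \sd b$ means $E(a,b) = \varnothing$, so the task reduces to showing that $Z(u) = \varnothing$ if and only if $u$ generates $A \otimes B$.

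For that equivalence, I would invoke Proposition~\ref{prop:intuitive-zeroes}: the underlying set of $Z(u)$ consists of those $s \in S$ at which the germ $u_s$ lies in $\mathfrak{m}_s (A \otimes B)_s$. Since $A \otimes B$ is a line bundle, the complement of this set is precisely the locus where $u$ generates $A \otimes B$ as an $\OO_S$-module (a single element of a rank-one free module generates it iff it is a unit). So $Z(u) = \varnothing$ as a set iff $u$ generates $A \otimes B$ at every point, iff $u$ generates $A \otimes B$ globally. Strictly, I want $Z(u) = \varnothing$ as a scheme; but the scheme $Z(u)$ is cut out by the ideal sheaf $\im(u^\vee)$, and $\im(u^\vee) = \OO_S$ precisely when $u$ generates $A \otimes B$, which yields the empty closed subscheme. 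This gives the first assertion.

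For the ``In particular'' clause, note that if $u$ generates the line bundle $A \otimes B$, then the map $\OO_S \to A \otimes B$ sending $1 \mapsto u$ is a surjective map between line bundles, hence an isomorphism. Therefore $A \otimes B \cong \OO_S$, and tensoring with $A^{-1}$ yields $B \cong A^{-1}$.

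I do not expect any serious obstacle here: the work has already been done in Theorem~\ref{thm:equalizer} and Proposition~\ref{prop:intuitive-zeroes}. The only point requiring mild care is the translation between ``$Z(u)$ is empty as a scheme'' and ``$u$ generates $A \otimes B$,'' which follows because the defining ideal $\im(u^\vee)$ of $Z(u)$ is the unit ideal exactly when $u$ is a nowhere-vanishing section of the line bundle $A \otimes B$.
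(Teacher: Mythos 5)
Your proof is correct and takes exactly the route the paper intends: the paper offers no written proof, presenting the corollary as an ``immediate consequence'' of Theorem~\ref{thm:equalizer}, and your argument---identifying $E(a,b)$ with the zero scheme $Z(u)$, invoking Proposition~\ref{prop:intuitive-zeroes}, and noting that $Z(u)=\varnothing$ as a scheme precisely when $\im(u^\vee)=\OO_S$---supplies precisely the details being left to the reader. Your handling of the ``in particular'' clause (a surjection $\OO_S \to A\otimes B$ of line bundles is an isomorphism) is likewise the standard and correct completion.
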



\section{Cross-ratios}
\label{sec:CrossRatios}

Let $S$ be a fixed scheme throughout this section. 

\begin{lemma}
  \label{lem:sd3}
Let $a,b,c \colon S \to \PP^1_S$ be pairwise strongly distinct. Then $a^*\OO(1)
\cong b^*\OO(1) \cong c^*\OO(1)$, and each of these line bundles is 2-torsion in
$\Pic(S)$. 
\end{lemma}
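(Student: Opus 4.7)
The plan is to derive everything directly from the Determinant Criterion (Corollary~\ref{cor:sd-criterion}), which is exactly designed for this situation. Write $A = a^*\OO(1)$, $B = b^*\OO(1)$, $C = c^*\OO(1)$, with corresponding generating global sections $(a_0,a_1)$, $(b_0,b_1)$, $(c_0,c_1)$.

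First, apply the Determinant Criterion to each of the three strongly-distinct pairs. From $a \sd b$ we get an isomorphism $B \cong A^{-1}$; from $a \sd c$ we get $C \cong A^{-1}$; and from $b \sd c$ we get $C \cong B^{-1}$.

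Now combine these. Substituting $B \cong A^{-1}$ into $C \cong B^{-1}$ gives $C \cong A$. But we also have $C \cong A^{-1}$, so $A \cong A^{-1}$, which is equivalent to $A^{\otimes 2} \cong \OO_S$. Hence $A$ is $2$-torsion in $\Pic(S)$. Using $B \cong A^{-1} \cong A$ and $C \cong A$ (since all three are $2$-torsion and pairwise inverses up to isomorphism), we conclude $A \cong B \cong C$, and all are $2$-torsion.

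There is no real obstacle here: the lemma is essentially a three-line algebraic consequence of the Determinant Criterion, which has already done the substantive geometric work. The only thing to be slightly careful about is that ``$\cong$'' in $\Pic(S)$ is an equality of isomorphism classes, so the chain $A^{-1} \cong C \cong A$ legitimately produces the torsion relation $A \otimes A \cong \OO_S$ in the Picard group.
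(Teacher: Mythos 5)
Your proof is correct and follows essentially the same route as the paper's own argument: both apply the Determinant Criterion (Corollary~\ref{cor:sd-criterion}) to each strongly distinct pair and chain the resulting isomorphisms $A \cong B^{-1} \cong C \cong A^{-1}$ to get $A^{\otimes 2} \cong \OO_S$ and hence $A \cong B \cong C$, all $2$-torsion in $\Pic(S)$. There is nothing to add or fix.
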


\begin{proof}
Set $A = a^* \OO(1)$, $B = b^*\OO(1)$, and $C = c^*\OO(1)$. Since $a \sd b \sd c
\sd a$, by Corollary~\ref{cor:sd-criterion}, we have $A\cong B^{-1}\cong C\cong
A^{-1}$, and consequently also $A \cong B$.
\end{proof}

If $a,b \in \PP^1(S)$ are represented by the data $(A;a_0,a_1)$ and
$(B;b_0,b_1)$, respectively, define the section $\Delta(a,b) \in A \otimes B$ by
$a_0 \otimes b_1 - a_1 \otimes b_0$. If $a,b$ are strongly distinct, then the
Determinant Criterion shows that $\Delta(a,b)$ is a nowhere vanishing section of
$A \otimes B$. 
    
\begin{definition}[Cross-Ratio]
\label{def:cross-ratio}
Let $a,b,c,d \in \PP^1(S)$ be four pairwise strongly distinct points,
represented by the data $(L;a_0,a_1)$, $(L;b_0,b_1)$, $(L;c_0,c_1)$, and
$(L;d_0,d_1)$, respectively. (Lemma~\ref{lem:sd3} shows that we may choose all
of the line bundles to be equal.) Choose an isomorphism $\varphi \colon
L^{\otimes 2} \to \OO_S$. The \textbf{cross-ratio} of these four points, denoted
$(a,b;\,c,d)$, is the element of $\Gamma(S,\OO_S)$ defined by
\[
(a,b;\,c,d) = \frac{\varphi\big(\Delta(a,c)\big) \ \varphi\big(\Delta(b,d)\big)}
{\varphi\big(\Delta(a,d)\big) \ \varphi\big(\Delta(b,c)\big)}.
\]
\end{definition}

The following are simple but useful consequences of the definition:

\begin{proposition}
With notation as in Definition~\ref{def:cross-ratio},
\begin{enumerate}
\item $(a,b;c,d)$ is independent of the choice of isomorphism $\varphi$
\item $(a,b;c,d)$ is a unit in $\Gamma(S,\OO_S)$
\end{enumerate}
\end{proposition}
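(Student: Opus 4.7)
The plan is to handle the two claims with separate, very short arguments using the Determinant Criterion (Corollary~\ref{cor:sd-criterion}) and the fact that $\Gamma(S, L^{\otimes 2})$ is a free $\Gamma(S, \OO_S)$-module of rank $1$ once we fix a trivialization.

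For independence of $\varphi$, I would observe that any two isomorphisms $\varphi, \varphi' \colon L^{\otimes 2} \to \OO_S$ differ by multiplication by a unit $u \in \Gamma(S, \OO_S)^\times$; that is, $\varphi' = u \cdot \varphi$. Each of the four sections $\Delta(a,c), \Delta(b,d), \Delta(a,d), \Delta(b,c)$ lives in $\Gamma(S, L \otimes L) = \Gamma(S, L^{\otimes 2})$, so replacing $\varphi$ by $\varphi'$ multiplies each of the four factors in the defining ratio by $u$. Since there are two factors in the numerator and two in the denominator, the four copies of $u$ cancel, giving $(a,b;\,c,d)$ the same value for $\varphi'$ as for $\varphi$.

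For (2), I would use the Determinant Criterion: since $a,b,c,d$ are pairwise strongly distinct, each of $\Delta(a,c), \Delta(b,d), \Delta(a,d), \Delta(b,c)$ is a nowhere-vanishing global section of $L^{\otimes 2}$. Applying the $\OO_S$-module isomorphism $\varphi \colon L^{\otimes 2} \to \OO_S$ preserves the property of being nowhere vanishing, so each of $\varphi(\Delta(a,c)), \varphi(\Delta(b,d)), \varphi(\Delta(a,d)), \varphi(\Delta(b,c))$ is a nowhere-vanishing global section of $\OO_S$, i.e., an element of $\Gamma(S, \OO_S)^\times$. A ratio of units in a commutative ring is again a unit, so $(a,b;\,c,d) \in \Gamma(S, \OO_S)^\times$.

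Neither step is a real obstacle; the work was done in Corollary~\ref{cor:sd-criterion}, and the only point worth emphasizing is that ``nowhere vanishing'' of a section of a line bundle is exactly what it means for the section to generate the bundle freely, hence to map to a unit under any trivialization.
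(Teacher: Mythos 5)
Your proof is correct and follows essentially the same route as the paper: the authors also note that any two trivializations of $L^{\otimes 2}$ differ by a unit of $\Gamma(S,\OO_S)$ (phrased as ``every automorphism of $\OO_S$ is multiplication by a unit''), so the four factors of $u$ cancel in the ratio, and they likewise deduce (2) directly from the Determinant Criterion. Your only addition is to spell out the cancellation and the fact that a nowhere-vanishing section maps to a unit under a trivialization, details the paper leaves implicit.
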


\begin{proof}
As every automorphism of $\OO_S$ is given by multiplication by a unit of
$\Gamma(S,\OO_S)$, the definition makes it clear that $(a,b;\,c,d)$ is
independent of the choice $\varphi$.  The Determinant Criterion shows that the
numerator and denominator of $(a,b;\, c,d)$ are units of $\Gamma(S,\OO_S)$, so
that $(a,b;\,c,d)$ is itself a unit.
\end{proof}

In order to see the connection with the usual cross-ratio, suppose that $S =
\Spec k$ is the spectrum of a field. Then we may take $L = \OO_{\Spec k}$ and
$\varphi \colon L \otimes L \to \OO_{\Spec k}$ is multiplication in $k$. In that
case, the cross-ratio of the four points is given by the quotient
\[
(a,b;\,c,d) =  \frac{(a_0c_1 - a_1c_0)(b_0d_1 - b_1d_0)}{(a_0d_1 - a_1d_0)(b_0c_1 - b_1c_0)}.
\]
If we dehomogenize by setting $a_1 = b_1 = c_1 = d_1 = 1$, this becomes the
familiar formula for the cross-ratio:
\[
(a,b;\,c,d) = \frac{(a - c)(b-d)}{(a-d)(b-c)}.
\]

We set the following notation for standard points of $\PP^1(S)$:
\begin{eqnarray*}
\Infinity &=& \left(\OO_S; 1,0\right) \\
\Zero &=& \left(\OO_S; 0,1\right) \\
\One &=& \left(\OO_S; 1,1\right).
\end{eqnarray*}
Note that $\Infinity$, $\Zero$, and $\One$ are pairwise strongly distinct by the
Determinant Criterion.

\begin{theorem}
\label{thm:3-transitivity}
Let $a,b,c \in \PP^1(S)$ be pairwise strongly distinct. Then there is a unique
$\sigma \in \PGL_2(S)$ such that $\sigma(a) = \Infinity$, $\sigma(b) =
\Zero$, and $\sigma(c) = \One$.
\end{theorem}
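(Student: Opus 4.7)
The plan is to construct the inverse $\tau = \sigma^{-1}$ explicitly; both existence and uniqueness of $\sigma$ fall out of the resulting formulas.

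By Lemma~\ref{lem:sd3}, the line bundles $a^*\OO(1)$, $b^*\OO(1)$, $c^*\OO(1)$ are all isomorphic to a common line bundle $L$ with $L^{\otimes 2}\cong\OO_S$. Fix representatives $a=(L;a_0,a_1)$, $b=(L;b_0,b_1)$, $c=(L;c_0,c_1)$. For any candidate $\tau = (L_\tau; N)$ with $\tau(\Infinity) = a$, applying the action formula of Corollary~\ref{cor:point-action} to $\Infinity=(\OO_S;1,0)$ forces $L_\tau \cong L$, so I take $L_\tau = L$. The same corollary, applied to $\Infinity$ and to $\Zero$, tells me the first and second columns of $N$ represent $a$ and $b$ respectively; up to $G_2(S)$-equivalence, this pins $N$ down to
\[
N \;=\; \begin{pmatrix} \lambda a_0 & \nu b_0 \\ \lambda a_1 & \nu b_1 \end{pmatrix}
\]
for some $\lambda,\nu \in \Gamma(S,\OO_S)^\times$.

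Now impose $\tau(\One) = c$. Expanding the action and matching with a representative of $c$ yields the linear system
\[
\lambda a_0 + \nu b_0 = w c_0, \qquad \lambda a_1 + \nu b_1 = w c_1,
\]
where the unit $w\in\Gamma(S,\OO_S)^\times$ encodes the freedom in choosing an isomorphism $L \to L$ matching the two representatives. The coefficient determinant is $\Delta(a,b):=a_0\otimes b_1 - a_1\otimes b_0$, which generates $L^{\otimes 2}$ by the Determinant Criterion (Corollary~\ref{cor:sd-criterion}). Cramer's rule gives
\[
\frac{\lambda}{w} \;=\; \frac{\Delta(c,b)}{\Delta(a,b)}, \qquad \frac{\nu}{w} \;=\; \frac{\Delta(a,c)}{\Delta(a,b)},
\]
and each ratio is a well-defined unit of $\Gamma(S,\OO_S)$ because numerator and denominator are nowhere-vanishing sections of the same line bundle $L^{\otimes 2}$, again by the Determinant Criterion. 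The $G_2(S)$-equivalence absorbs the common factor $w$, so $\lambda,\nu$ are determined. A short computation gives $\det(N) = \lambda\nu\,\Delta(a,b)$, a generator of $L^{\otimes 2}$, so $\tau = (L;N)$ really defines an element of $\PGL_2(S) \cong G_2(S)$, and I set $\sigma := \tau^{-1}$.

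Uniqueness is immediate from the derivation: every step above was \emph{forced} modulo the equivalence on $G_2(S)$, so any other $\sigma'$ satisfying the three conditions produces the same equivalence class for $(\sigma')^{-1}$, hence $\sigma'=\sigma$ in $\PGL_2(S)$. I do not anticipate any serious geometric obstacle; the main delicacy is the bookkeeping of line-bundle data and the $G_2(S)$-equivalence, which is tamed by the 2-torsion conclusion of Lemma~\ref{lem:sd3} that collapses every line bundle in sight to $L$ and makes all relevant tensor products land in $L^{\otimes 2}\cong\OO_S$.
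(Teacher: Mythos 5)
Your proposal is correct, and it arrives at the paper's automorphism from the opposite direction. The paper's proof guesses $\sigma$ outright, via the explicit matrix in \eqref{eq:unique-sigma}, verifies by direct computation that $\sigma(a)=\Infinity$, $\sigma(b)=\Zero$, $\sigma(c)=\One$, and then proves uniqueness \emph{separately} by a stabilizer argument: writing an automorphism $\tau=(T;A)$ fixing $\Infinity,\Zero,\One$ and showing it must be equivalent to $(\OO_S;1)$. You instead solve for $\tau=\sigma^{-1}$, letting Corollary~\ref{cor:point-action} force $L_\tau\cong L$ and the columns of $N$ up to units, and Cramer's rule pin down $\lambda,\nu$; this makes uniqueness fall out of the derivation rather than requiring a separate stabilizer computation. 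In fact the two constructions coincide: the inverse of your $(L;N)$ in $G_2(S)$ is $\bigl(L;\ad(N)\bigr)$ with $\ad(N)=\mat{\nu b_1 & -\nu b_0 \\ -\lambda a_1 & \lambda a_0}$, and substituting your values of $\lambda,\nu$ and clearing the common unit $\varphi(\Delta(a,b))$ recovers exactly the paper's matrix $\mat{\varphi(\Delta(a,c)) & 0 \\ 0 & \varphi(\Delta(c,b))}\star\mat{b_1 & -b_0 \\ -a_1 & a_0}$. Two small points deserve a sentence each in a final write-up. First, Cramer's rule as you run it gives only \emph{necessity}; to know your $\tau$ really satisfies $\tau(\One)=c$ you should either check the identity $\Delta(c,b)\otimes a_i+\Delta(a,c)\otimes b_i=\Delta(a,b)\otimes c_i$ in $L^{\otimes 3}$ (this is, in essence, the computation the paper performs when verifying $\sigma(c)=\One$), or observe that since $\Delta(a,b)$ is nowhere vanishing, on a trivializing cover the $2\times2$ systems have unique solutions given by your global formulas, which therefore glue to an actual solution. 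Second, your elimination step silently uses $b_0\otimes b_1=b_1\otimes b_0$ in $L^{\otimes 2}$; this is true because the symmetry isomorphism is the identity on the square of a line bundle, and the paper makes the same move (for $x_0\otimes x_1=x_1\otimes x_0$ in $\OO(2)$) in the proof of Theorem~\ref{thm:equalizer}. With those two observations made explicit, your argument is complete and, arguably, explains \emph{why} the paper's formula \eqref{eq:unique-sigma} is what it is.
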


\begin{proof}
  By Lemma~\ref{lem:sd3}, we may assume that $L := a^*\OO(1) = b^*\OO(1) =
  c^*\OO(1)$, and that $L$ is 2-torsion in $\Pic(S)$. Let $x_0,x_1$ be the
  standard generators of $\OO(1)$, and let $a_i=a^*(x_i)$, $b_i=b^*(x_i)$, and
  $c_i=c^*(x_i)$.  Fix an isomorphism $\varphi:L^{\otimes2}\to\OO_S$. Define
  $\sigma$ by the pair $(L;M)$, where $M\in{}M_{2\times2}(\Gamma(L,S))$ is given
  by:
  \begin{equation}
    \label{eq:unique-sigma}
M =
\begin{pmatrix} \varphi\big(\Delta(a,c)\big) & 0 \\ 0 & \varphi\big(\Delta(c,b)\big) \end{pmatrix}
\star
\begin{pmatrix} b_1 & -b_0 \\ -a_1 & a_0 \end{pmatrix}.
  \end{equation}
Applying this to $a,b,c$ we find:
\begin{equation*}
\begin{aligned}
\sigma(a)
 &= \big(L^{\otimes2}; \varphi(\Delta(a,c))\Delta(a,b), 0\big) \\
 &= \big(\OO_X; \varphi(\Delta(a,c))\varphi(\Delta(a,b)), 0\big) \\
 &= \Infinity \\
\sigma(b)
 &= \big(L^{\otimes2}; 0, \varphi(\Delta(c,b))\Delta(a,b)\big) \\
 &= \big(\OO_X; 0, \varphi(\Delta(c,b))\varphi(\Delta(a,b))\big) \\
 &= \Zero \\
\sigma(c)
 &= \big(L^{\otimes2}; \varphi(\Delta(a,c))\Delta(c,b), \varphi(\Delta(c,b))\Delta(a,c)\big) \\
 &= \big(\OO_X; \varphi(\Delta(a,c))\varphi(\Delta(c,b)), \varphi(\Delta(c,b))\varphi(\Delta(a,c))\big) \\
 &= \One.
\end{aligned}
\end{equation*}

If $\sigma'$ is any other automorphism satisfying the desired property, then
$\tau := \sigma' \circ \sigma^{-1}$ is an automorphism fixing $\Infinity, \Zero,
\One \in \PP^1(S)$. We show that $\tau$ is the identity. Write $\tau = (T;A)$,
where $T$ is a line bundle on $S$ and $A = \mat{t_{00}&t_{01} \\ t_{10} &
  t_{11}}$ is a matrix of sections of $T$.  Then
\begin{eqnarray*}
  \tau(\Infinity) &=& \big(T; A \star \mat{1\\0} \big) = \big(T;t_{00},t_{10}\big) \\
  \tau(\Zero) &=& \big(T; A \star \mat{0\\1} \big) = \big(T;t_{01},t_{11}\big) \\
  \tau(\One) &=& \big(T; A \star \mat{1\\1} \big) = \big(T;t_{00} +
  t_{01},t_{10} + t_{11}\big).
\end{eqnarray*}
Since $\tau(\Infinity) = \Infinity$, we conclude that there is an isomorphism
$\theta \colon T \cong \OO_S$ such that $\theta(t_{00}) = 1$ and $\theta(t_{10})
= 0$. As $\tau(\Zero) = \Zero$, we may use this same isomorphism to find
$\theta(t_{01}) = 0$ and $\theta(t_{11}) = r \in
\Gamma(S,\OO_S)^\times$. Finally, since $\tau(\One) = \One$, we conclude that
\[
1 = \theta(t_{00}) + \theta(t_{01}) = \theta(t_{10}) + \theta(t_{11}) = r.
\]
Thus, $\theta$ witnesses the equivalence between $(T;A)$ and $(\OO_S;1)$; that
is, $\tau$ is the identity. 
\end{proof}

\begin{theorem}
  \label{thm:witness}
Let $a,b,c,d \in \PP^1(S)$ be pairwise strongly distinct, and let $\sigma \in
\PGL_2(S)$ be the unique automorphism such that $\sigma(a) = \Infinity$,
$\sigma(b) = \Zero$, and $\sigma(c) = \One$. Then $\sigma(d) = (\OO_S; z,1)$,
where $z = (a,b;c,d)$.
\end{theorem}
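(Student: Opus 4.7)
The plan is to apply the explicit formula for $\sigma$ established in the proof of Theorem~\ref{thm:3-transitivity} directly to the point $d$, then identify the result with the cross-ratio by inspection. Since Theorem~\ref{thm:3-transitivity} already produces a witness automorphism $\sigma = (L; M)$ with $M$ given by equation~\eqref{eq:unique-sigma}, and since Corollary~\ref{cor:point-action} tells us exactly how such an element of $G_2(S) \cong \PGL_2(S)$ acts on a point of $\PP^1(S)$, this is essentially a bookkeeping computation with tensor products.

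Concretely, I would first invoke Lemma~\ref{lem:sd3} to write $d = (L; d_0, d_1)$ using the same line bundle $L$ as for $a,b,c$. Then by Corollary~\ref{cor:point-action}, one has
\[
\sigma(d) = \bigl(L \otimes L;\ M \star \mat{d_0 \\ d_1}\bigr),
\]
where $M$ is the tensor product of the diagonal matrix with entries $\varphi(\Delta(a,c)), \varphi(\Delta(c,b)) \in \Gamma(S,\OO_S)$ and the matrix $\mat{b_1 & -b_0 \\ -a_1 & a_0}$. Expanding the inner star product $\mat{b_1 & -b_0 \\ -a_1 & a_0} \star \mat{d_0 \\ d_1}$ yields the column vector with entries $b_1 \otimes d_0 - b_0 \otimes d_1 = -\Delta(b,d)$ and $a_0 \otimes d_1 - a_1 \otimes d_0 = \Delta(a,d)$. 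Multiplying through by the diagonal scalar matrix and then applying the isomorphism $\varphi \colon L^{\otimes 2} \to \OO_S$ componentwise (which induces an isomorphism of the pair to a pair of global sections of $\OO_S$), I obtain
\[
\sigma(d) \;\cong\; \bigl(\OO_S;\ -\varphi(\Delta(a,c))\,\varphi(\Delta(b,d)),\ \varphi(\Delta(c,b))\,\varphi(\Delta(a,d))\bigr).
\]

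To finish, I would use the antisymmetry $\Delta(c,b) = -\Delta(b,c)$ to clear the sign in the second coordinate, so both coordinates pick up a common factor of $-1$ that may be absorbed by the equivalence in $G_2(S)$ (multiplication by $-1 \in \OO_S^\times$). The Determinant Criterion (Corollary~\ref{cor:sd-criterion}) guarantees that $\varphi(\Delta(a,d))$ and $\varphi(\Delta(b,c))$ are units in $\Gamma(S,\OO_S)$, so I may rescale by $\bigl(\varphi(\Delta(a,d))\varphi(\Delta(b,c))\bigr)^{-1}$ to put the pair in the form $(\OO_S;\, z, 1)$, where
\[
z \;=\; \frac{\varphi(\Delta(a,c))\,\varphi(\Delta(b,d))}{\varphi(\Delta(a,d))\,\varphi(\Delta(b,c))} \;=\; (a,b;\,c,d)
\]
by Definition~\ref{def:cross-ratio}.

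The only real obstacle is keeping the signs and the order of arguments in the $\Delta$-sections straight; there is no conceptual content beyond the formula for $\sigma$ from Theorem~\ref{thm:3-transitivity} and the action described in Corollary~\ref{cor:point-action}. As a byproduct, this computation also confirms (via the factor of $-1$ absorbed at the end) that the definition of the cross-ratio is well-posed and recovers the classical quotient.
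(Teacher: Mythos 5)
Your proposal is correct and follows essentially the same route as the paper's proof: apply the explicit matrix $M$ from \eqref{eq:unique-sigma} to $d$, recognize the entries as $\varphi(\Delta(a,c))\varphi(\Delta(b,d))$ and $\varphi(\Delta(a,d))\varphi(\Delta(b,c))$, and rescale by the second entry (a unit by the Determinant Criterion) to reach $(\OO_S;\,z,1)$ with $z=(a,b;\,c,d)$. The only difference is that you track the antisymmetry $\Delta(c,b)=-\Delta(b,c)$ and absorb the common factor of $-1$ explicitly, a sign the paper's computation absorbs silently, so your version is if anything slightly more careful.
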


\begin{proof}
  Apply the formula for $\sigma$ given by \eqref{eq:unique-sigma} to see that
  $\sigma(d) = (\OO_S; z_0, z_1)$, where
\begin{eqnarray*}
z_0 &=& \varphi\big(\Delta(a,c)\big) \ \varphi\big(\Delta(b,d)\big) \\
z_1 &=& \varphi\big(\Delta(a,d)\big) \ \varphi\big(\Delta(b,c)\big).
\end{eqnarray*}
Since $a,b,c,d$ are pairwise strongly distinct, it follows from the Determinant
Criterion that $z_1$ is a unit in $\Gamma(S,\OO_S)$. Applying the automorphism
of $\OO_S$ given by multiplication by $z_1^{-1}$, we see that
$\sigma(d) \!=\! (\OO_S; z_0/z_1, 1)$.
Looking at the formula for the cross-ratio, we see that
$z_0 / z_1 = (a,b;\,c,d)$.   
\end{proof}

\begin{corollary}
Given two 4-tuples, $(x_1,x_2,x_3,x_4)$ and $(y_1,y_2,y_3,y_4)$, of pairwise strongly
distinct points in $\PP^1(S)$, the following are equivalent:
\begin{enumerate}
\item There exists $\gamma \in \Aut_S(\PP^1_S) \cong \PGL_2(S)$ such that
$y_i=\gamma(x_i)$ for $i=1,2,3,4$;
\item $(x_1,x_2;\,\,x_3,x_4) = (y_1,y_2;\,y_3,y_4)$.
\end{enumerate}
\end{corollary}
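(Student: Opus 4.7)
The plan is to reduce the statement to Theorems~\ref{thm:3-transitivity} and~\ref{thm:witness}, which together essentially say that the orbit of a triple under $\PGL_2(S)$ is unique, and that the cross-ratio is precisely the coordinate of the fourth point after normalizing the first three to $\Infinity,\Zero,\One$.

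By Theorem~\ref{thm:3-transitivity}, applied to the pairwise strongly distinct triples $(x_1,x_2,x_3)$ and $(y_1,y_2,y_3)$, there exist unique automorphisms $\sigma_x,\sigma_y\in\PGL_2(S)$ with $\sigma_x(x_i)=\sigma_y(y_i)\in\{\Infinity,\Zero,\One\}$ (in the appropriate order) for $i=1,2,3$. By Theorem~\ref{thm:witness}, we have $\sigma_x(x_4)=(\OO_S;z_x,1)$ and $\sigma_y(y_4)=(\OO_S;z_y,1)$ where $z_x=(x_1,x_2;\,x_3,x_4)$ and $z_y=(y_1,y_2;\,y_3,y_4)$.

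For (1)$\Rightarrow$(2): assume $\gamma\in\PGL_2(S)$ satisfies $y_i=\gamma(x_i)$ for all $i$. Then $\sigma_y\circ\gamma$ sends $(x_1,x_2,x_3)$ to $(\Infinity,\Zero,\One)$, so by the uniqueness clause of Theorem~\ref{thm:3-transitivity} we get $\sigma_y\circ\gamma=\sigma_x$. Applying both sides to $x_4$ gives $\sigma_x(x_4)=\sigma_y(y_4)$, i.e. $(\OO_S;z_x,1)$ and $(\OO_S;z_y,1)$ represent the same $S$-point of $\PP^1$. Any equivalence between these data is an automorphism of $\OO_S$ (multiplication by a unit) fixing $1$, hence the identity, so $z_x=z_y$.

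For (2)$\Rightarrow$(1): assume $z_x=z_y$. Then $\sigma_x(x_4)$ and $\sigma_y(y_4)$ are both $(\OO_S;z_x,1)$, hence equal as $S$-points. Set $\gamma:=\sigma_y^{-1}\circ\sigma_x\in\PGL_2(S)$. For $i=1,2,3$ we have $\gamma(x_i)=\sigma_y^{-1}(\sigma_x(x_i))=\sigma_y^{-1}(\sigma_y(y_i))=y_i$, and likewise $\gamma(x_4)=\sigma_y^{-1}(\sigma_x(x_4))=\sigma_y^{-1}(\sigma_y(y_4))=y_4$.

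There is no real obstacle here: the entire content has already been packaged into Theorems~\ref{thm:3-transitivity} and~\ref{thm:witness}. The only point requiring a sentence of care is the last step of (1)$\Rightarrow$(2), namely that equality of the points $(\OO_S;z_x,1)$ and $(\OO_S;z_y,1)$ in $\PP^1(S)$ forces $z_x=z_y$ in $\Gamma(S,\OO_S)$, which follows from the equivalence relation defining $G_2(S)$ together with the fact that any automorphism of $\OO_S$ fixing $1$ is the identity.
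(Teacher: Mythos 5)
Your proof is correct and follows essentially the same route as the paper's: normalize each triple to $(\Infinity,\Zero,\One)$ via Theorem~\ref{thm:3-transitivity}, read off the fourth point as $(\OO_S;z,1)$ via Theorem~\ref{thm:witness}, and compare. You are in fact slightly more careful than the paper at two points it leaves implicit --- invoking the uniqueness clause to get $\sigma_y\circ\gamma=\sigma_x$, and noting that equality of the points $(\OO_S;z_x,1)$ and $(\OO_S;z_y,1)$ in $\PP^1(S)$ forces $z_x=z_y$ because any automorphism of $\OO_S$ fixing $1$ is the identity.
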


\begin{proof}
There is a unique automorphism $\alpha$ taking $(x_1,x_2,x_3,x_4)$ to
$(\Infinity,\Zero,\One,\mathbf{a})$ for some $\mathbf{a}\in\PP^1(S)$.  Similarly
there is a unique automorphism $\beta$ taking $(y_1,y_2,y_3,y_4)$ to
$(\Infinity,\Zero,\One,\mathbf{b})$ for some $\mathbf{b}\in\PP^1(S)$.  Thus if (1)
holds, we must have $\mathbf{a}=\mathbf{b}$.  But we also know that
$\mathbf{a}=\big(\OO_S;a,1\big)$ and $\mathbf{b}=\big(\OO_S;b,1\big)$ with
$a=(x_1,x_2;\,x_3,x_4)$ and $b=(y_1,y_2;\,y_3,y_4)$, so $\mathbf{a}=\mathbf{b}$
implies (2).  Conversely, if (2) holds, then we have $\mathbf{a}=\mathbf{b}$ and
we can set $\gamma=\beta^{-1}\alpha$.
\end{proof}

\begin{proposition}
  \label{prop:symmetries}
Let $a,b,c,d \in \PP^1(S)$ be pairwise strongly distinct. The cross-ratio
enjoys the following symmetries:
\begin{enumerate}
\item $(c,d;\,a,b) = (a,b;\,c,d)$;  
\item $(b,a;\,c,d) = (a,b;\,c,d)^{-1}$; and
\item $(a,c;\,b,d) = 1 - (a,b;\,c,d)$.  
\end{enumerate}
\end{proposition}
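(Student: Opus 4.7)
The plan is to reduce all three symmetries to algebraic identities among the sections $\Delta(x,y) \in \Gamma(S, L^{\otimes 2})$, where $L$ is the common line bundle furnished by Lemma~\ref{lem:sd3}, and then to push these down to $\Gamma(S,\OO_S)$ using the chosen trivialization $\varphi$ (or its tensor powers). The crucial preliminary observation is that, because $L^{\otimes 2}$ (and $L^{\otimes 4}$) is a line bundle, tensor factors commute: $s \otimes t = t \otimes s$. In particular $\Delta$ is \emph{alternating}, $\Delta(y,x) = -\Delta(x,y)$, as one sees by swapping tensor factors in each term of $y_0 \otimes x_1 - y_1 \otimes x_0$.

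Parts~(1) and~(2) follow immediately from inspection. For~(2), the substitution $a \leftrightarrow b$ in the definition interchanges the two $\Delta$-factors of the numerator of $(a,b;\,c,d)$ with those of the denominator, giving the reciprocal (with no residual signs, since the arguments of each $\Delta$ are not transposed). For~(1), passing from $(a,b;\,c,d)$ to $(c,d;\,a,b)$ reverses the arguments of all four $\Delta$'s appearing; the four resulting sign flips from the alternating property cancel in pairs in the ratio.

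The heart of the argument is~(3), and here the main obstacle is establishing the Pl\"ucker-type identity
\begin{equation*}
\Delta(a,d) \otimes \Delta(b,c) \;-\; \Delta(a,c) \otimes \Delta(b,d) \;+\; \Delta(a,b) \otimes \Delta(c,d) \;=\; 0
\end{equation*}
in $\Gamma(S,L^{\otimes 4})$. The proof is by direct expansion: each $\Delta(x,y) = x_0 \otimes y_1 - x_1 \otimes y_0$ contributes two monomials, so each of the three products yields four ``basic tensors'' of the form $a_i \otimes b_j \otimes c_k \otimes d_\ell$ once factors are reordered (legal since $L^{\otimes 4}$ is a line bundle). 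These twelve tensors sort into the six multi-indices with $i+j+k+\ell = 2$, each occurring once with sign $+1$ and once with sign $-1$, and the sum vanishes. This step is the one place where actual (though purely formal) computation is required.

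Finally, applying $\varphi^{\otimes 2} \colon L^{\otimes 4} \to \OO_S$ to the Pl\"ucker identity and dividing by $\varphi(\Delta(a,d))\,\varphi(\Delta(b,c))$—a unit by the Determinant Criterion, Corollary~\ref{cor:sd-criterion}—gives a relation in $\Gamma(S,\OO_S)^\times$. The first term becomes $1$, the second becomes $(a,b;\,c,d)$, and for the third one uses the alternating property to convert $\Delta(b,c)$ back to $-\Delta(c,b)$ in the denominator, recognizing $\varphi(\Delta(a,b))\,\varphi(\Delta(c,d))/\varphi(\Delta(a,d))\,\varphi(\Delta(c,b)) = (a,c;\,b,d)$. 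Rearranging yields $(a,c;\,b,d) = 1 - (a,b;\,c,d)$.
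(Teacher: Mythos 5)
Your proof is correct, and for part~(3) it takes a genuinely different route from the paper's. Parts~(1) and~(2) coincide with the paper, which also disposes of them by inspection of the defining formula; your explicit appeal to the alternating property $\Delta(y,x) = -\Delta(x,y)$ (valid because the factor-swap on $L \otimes L$ is the identity for a line bundle) just makes the paper's sign-flipping remark precise. For~(3), however, the paper computes nothing: it takes the unique $\sigma \in \PGL_2(S)$ with $\sigma(a) = \Infinity$, $\sigma(b) = \Zero$, $\sigma(c) = \One$ from Theorem~\ref{thm:3-transitivity}, uses Theorem~\ref{thm:witness} to write $\sigma(d) = (\OO_S;\, z, 1)$ with $z = (a,b;\,c,d)$, and then post-composes with the explicit automorphism $\tau = \left(\OO_S;\, \mat{-1 & 1 \\ 0 & 1}\right)$, which fixes $\Infinity$, exchanges $\Zero$ and $\One$, and sends $(\OO_S;\,z,1)$ to $(\OO_S;\,1-z,1)$; a second application of Theorem~\ref{thm:witness}, now to the tuple $(a,c,b,d)$, yields $(a,c;\,b,d) = 1 - z$. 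Your route instead establishes the Pl\"ucker relation $\Delta(a,d)\otimes\Delta(b,c) - \Delta(a,c)\otimes\Delta(b,d) + \Delta(a,b)\otimes\Delta(c,d) = 0$ in $\Gamma(S, L^{\otimes 4})$ by formal expansion --- your bookkeeping checks out: the twelve monomials $a_i \otimes b_j \otimes c_k \otimes d_\ell$ cancel in pairs, and after applying $\varphi^{\otimes 2}$ and dividing by the unit $\varphi(\Delta(a,d))\,\varphi(\Delta(b,c))$ (Corollary~\ref{cor:sd-criterion}), the sign conversion $\varphi(\Delta(c,b)) = -\varphi(\Delta(b,c))$ correctly turns the third term into $-(a,c;\,b,d)$, so the rearrangement is right. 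As for what each approach buys: yours is self-contained, needing only Lemma~\ref{lem:sd3} and the Determinant Criterion rather than the transitivity and witness theorems, and it exposes the Pl\"ucker identity as the algebraic source of the three-term symmetry (a template that would produce the other $S_4$-relations just as mechanically); the paper's argument is computation-free past equation~\eqref{eq:unique-sigma} and emphasizes the moduli-theoretic picture in which the cross-ratio is the normalized position of $d$, so that symmetries of the cross-ratio become compositions with the elements of $\PGL_2(S)$ permuting $\Infinity$, $\Zero$, $\One$.
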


\begin{remark}
Note that the permutation group $S_4$ is generated by the elements $(12)$,
$(23)$, and $(13)(24)$. The three formulas in the proposition give a complete
description of how the $S_4$ action on a 4-tuple of pairwise strongly distinct
points impacts the value of its cross-ratio. In particular, it shows that the
Klein 4-group $\{1,(12)(34),(13)(24),(14)(23)\}$ stabilizes the cross-ratio.
This is well known in the classical setting.
\end{remark}

\begin{proof}[Proof of Proposition~\ref{prop:symmetries}]
  The first symmetry is immediate upon writing down the formula for $(c,d;\,a,b)$
  and flipping the signs of the four factors. 
  
  For the second symmetry, write down the formula for $(b,a;\,c,d)$; it is
  visibly the inverse of the formula for $(a,b;\,c,d)$.

  For the third symmetry, let $\sigma$ be the unique automorphism of $\PP^1_S$
  that satisfies $\sigma(a) = \Infinity$, $\sigma(b) = \Zero$, $\sigma(c) =
  \One$. By Theorem~\ref{thm:witness}, we have $(a,b;\,c,d) = z$, where
  $\sigma(d) = (\OO_S; z,1)$. Define
  \[
  \tau = \left(\OO_S; \mat{-1 & 1 \\ 0 & 1} \right) \in \PGL_2(S).
  \]
  Then
  \[
  \tau\circ\sigma(a) = \Infinity, \qquad \tau\circ\sigma(c) = \Zero, \qquad
  \tau\circ\sigma(b)
  = \One, \text{ and}
  \]
  \[
    \tau\circ \sigma(d) = \left(\OO_S; \mat{-1 & 1 \\ 0 & 1}
    \star  \mat{ z \\ 1 }\right)
    = \left(\OO_S; 1-z,1\right).
  \]
    A second application of Theorem~\ref{thm:witness} shows that
    \[
    (a,c;\,b,d) = 1-z = 1 - (a,b;\,c,d). \qedhere
    \]
\end{proof}

We close with the observation that the cross-ratio behaves well under change
of coefficients:

\begin{proposition}
Let $f \colon T \to S$ be a morphism of schemes, and let $a, b, c, d
\in \PP^1(S)$ be pairwise strongly distinct. If we write $f^*$ for the
pullback map on sections, then we have
\[
\big(f^*(a), f^*(b);\, f^*(c), f^*(d)\big)
= f^*(\,(a,b;\,c,d)\,). 
\]
\end{proposition}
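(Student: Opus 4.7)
The plan is to unwind both sides of the desired equality by picking compatible representatives and exploiting the functoriality of pullback.

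First I would check that the left-hand cross-ratio is defined at all: by Proposition~\ref{prop:sd-preserved}, strong distinctness is preserved under precomposition with $f$, so $f^*a, f^*b, f^*c, f^*d$ are still pairwise strongly distinct in $\PP^1(T)$. Then using Lemma~\ref{lem:sd3}, I would fix a common line bundle $L$ on $S$ with representatives $(L; a_0, a_1)$, $(L; b_0, b_1)$, $(L; c_0, c_1)$, $(L; d_0, d_1)$, and choose an isomorphism $\varphi \colon L^{\otimes 2} \to \OO_S$. Pulling these data back via $f$ gives $f^*a = (f^*L; f^*a_0, f^*a_1)$ and similarly for $b, c, d$, while $f^*\varphi \colon (f^*L)^{\otimes 2} \to \OO_T$ is an isomorphism that can be used to compute $(f^*a, f^*b;\, f^*c, f^*d)$ from Definition~\ref{def:cross-ratio}.

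The key observation is the naturality of the $\Delta$-section under pullback. Since pullback of sheaves is additive and commutes with tensor products, one has
\[
\Delta(f^*a, f^*c) = f^*a_0 \otimes f^*c_1 - f^*a_1 \otimes f^*c_0 = f^*\bigl(a_0 \otimes c_1 - a_1 \otimes c_0\bigr) = f^*\Delta(a,c),
\]
and similarly for the other three pairs. Combining this with the naturality statement $(f^*\varphi)(f^*s) = f^*(\varphi(s))$ for any section $s$ of $L^{\otimes 2}$, each factor in the cross-ratio on the left pulls back term-by-term to the corresponding factor on the right. Since the map $f^* \colon \Gamma(S, \OO_S) \to \Gamma(T, \OO_T)$ is a ring homomorphism, it respects the quotient defining the cross-ratio, and the identity follows.

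The main obstacle is essentially bookkeeping: one must verify that all the ingredients (tensor products of sections, the $\Delta$-operation, and application of $\varphi$) commute with pullback. None of these is substantive, as they all reduce to standard functoriality properties of $f^*$ on quasi-coherent sheaves; the only potential subtlety is keeping track of the natural isomorphism $f^*(L^{\otimes 2}) \cong (f^*L)^{\otimes 2}$ when transporting $\varphi$, but this is built into the definition of pullback and presents no real difficulty.
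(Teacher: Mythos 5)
Your proof is correct, but it takes a genuinely different route from the paper. The paper's proof exploits Theorems~\ref{thm:3-transitivity} and~\ref{thm:witness}: since the normalizing automorphism $\sigma$ is unique, it can be built locally and glued, and the cross-ratio, being the coordinate of $\sigma(d)$, can therefore be computed locally; this reduces everything to $S = \Spec A$, $T = \Spec B$ with all four line bundles trivialized, where the identity becomes the transparent statement that the ring homomorphism $\psi \colon A \to B$ commutes with the classical formula in the $\Delta$'s. You instead stay global: you verify via Proposition~\ref{prop:sd-preserved} that the left side is defined, pick common representatives $(L; a_0, a_1)$, etc.\ via Lemma~\ref{lem:sd3}, and push everything through $f^*$ using naturality of $\Delta(\cdot,\cdot)$, the canonical isomorphism $f^*(L^{\otimes 2}) \cong (f^*L)^{\otimes 2}$, and the fact that $f^* \colon \Gamma(S,\OO_S) \to \Gamma(T,\OO_T)$ is a ring homomorphism (so it carries the unit denominator to a unit and respects the quotient). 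This works because Definition~\ref{def:cross-ratio} permits any choice of common-bundle representatives and any trivialization --- independence of $\varphi$ is exactly what licenses computing on $T$ with $f^*\varphi$ --- and because $(f^*L; f^*a_0, f^*a_1)$ does represent $f^*a$, the pulled-back sections still generating $f^*L$. What each approach buys: yours is self-contained and avoids invoking the transitivity and witness theorems, at the cost of carrying the canonical isomorphisms explicitly; the paper's localization makes the final computation essentially trivial but leans on the uniqueness-and-gluing machinery developed earlier. One small point worth making explicit in your write-up is that strong distinctness of the pulled-back points is needed not only for the left side to be \emph{defined} but also to know its denominator $\bigl(f^*\varphi\bigr)\bigl(\Delta(f^*a,f^*d)\bigr)\,\bigl(f^*\varphi\bigr)\bigl(\Delta(f^*b,f^*c)\bigr)$ is a unit; you do note this via Proposition~\ref{prop:sd-preserved}, and alternatively it follows directly since $f^*$ sends the unit $\varphi(\Delta(a,d))\,\varphi(\Delta(b,c))$ to a unit.
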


\begin{proof}  
  As the automorphism in Theorem~\ref{thm:3-transitivity} is unique, we can
  build it locally and then glue. By Theorem~\ref{thm:witness}, we can compute
  the cross-ratio locally. So it suffices to assume that $S = \Spec A$, $T =
  \Spec B$, and that $f$ is given by $\psi \colon A \to B$. Moreover, we may
  assume that
  \[
    a = (\OO_S; a_0, a_1), \quad b = (\OO_S; b_0, b_1), \quad c = (\OO_S; c_0,
    c_1), \quad d = (\OO_S; d_0, d_1).
  \]
  (Here it is important to note that strong distinctness implies that all four
  line bundles are isomorphic, so we may simultaneously trivialize them.)
  Now the result is clear from a computation:
  \begin{align*}
    f^*(a,b;\,c,d) &=
    \psi \left(\frac{\Delta(a,c) \ \Delta(b,d)}{\Delta(a,d)
      \ \Delta(b,c)}\right) \\
    &= \frac{\Delta\big(\psi(a),\psi(c)\big) \ \Delta\big(\psi(b),\psi(d)\big)}
         { \Delta\big(\psi(a),\psi(d)\big) \ \Delta\big(\psi(b),\psi(c)\big)} \\
         &= \big(f^*(a), f^*(b);\, f^*(c), f^*(d)\big). \qedhere
  \end{align*}
\end{proof}


\bibliographystyle{abbrv}
\bibliography{xratios}

\end{document}